\newcommand{\ra}{\rightarrow}
\newcommand{\by}[1]{\stackrel{#1}{\ra}}
\newcommand{\surj}{\ra\!\!\!\ra}
\newcommand{\ol}{\overline}          \newcommand{\wt}{\widetilde}
\newcommand{\iso}{\by \sim}
\newtheorem{theorem}{Theorem}[section]
\newtheorem{proposition}[theorem]{Proposition}
\newtheorem{lemma}[theorem]{Lemma}
\newtheorem{corollary}[theorem]{Corollary}
\newcommand{\ga}{\alpha}          \newcommand{\gb}{\beta}
          \newcommand{\gl}{\lambda}
\newcommand{\CM}{\mbox{$\mathcal M$}}     \newcommand{\CN}{\mbox{$\mathcal N$}}
\newcommand{\CS}{\mbox{$\mathcal S$}}
\newcommand{\op}{\mbox{$\oplus$}}     
     \newcommand{\Hom}{\mbox{\rm Hom}}
\newcommand{\Um}{\mbox{\rm Um}}          \newcommand{\SL}{\mbox{\rm SL}}
          \newcommand{\ot}{\mbox{$\otimes$}}
\newcommand{\Aut}{\mbox{\rm Aut}}
\begin{document}

\begin{center}
{\large \bf Another definition of Euler class group of a Noetherian ring}\\
\vspace{.2in} {\large Manoj K. Keshari\footnote{Supported by BOYSCAST
        Fellowship $2008-09$ of Department of Science and Technology,
        India} and Satya Mandal$^2$}\\
\vspace{.1in} {\small $^1$Department of Mathematics, IIT Mumbai,
Mumbai - 400076, India \\$^2$Department of Mathematics, University of
Kansas, 1460 Jayhawk Blvd, Lawrence, KS 66045, USA \\
keshari@math.iitb.ac.in, mandal@math.ku.edu}
\end{center}

%%%%%%%%%%%%%%%%%%%%%%%%%%%%%%%%%%%%%%%%%%%%%%%%%%%%%%%%%
\section{Introduction}

All the rings are assumed to be commutative Noetherian and all the
modules are assumed to be finitely generated.

Let $A$ be a ring of dimension $n\geq 2$ and let $L$ be a projective
$A$-module of rank $1$. In \cite{BR}, Bhatwadekar and Raja Sridharan
defined the Euler class group of $A$ with respect to $L$, denoted by
$E(A,L)$. To the pair $(P,\chi)$, where $P$ is a projective $A$-module
of rank $n$ with determinant $L$ and $\chi : L \iso \wedge^n P$ is a
$L$-orientation of $P$, they attached an element of the Euler class
group, denoted by $e(P,\chi)$. One of the main result in \cite{BR} is
that $P$ has a unimodular element if and only if $e(P,\chi)$ is zero
in $E(A,L)$.

We will define the Euler class group of $A$ with respect to a
projective $A$-module $F=Q\op A$ of rank $n$, denoted by $E(A,F)$. To
the pair $(P,\chi)$, where $P$ is a projective $A$-module of rank $n$
and $\chi :\wedge^n F \iso \wedge^n P$ is a $F$-orientation of $P$, we
associate an element of the Euler class group, denoted by $e(P,\chi)$
and prove the following result: $P$ has a unimodular element if and
only if $e(P,\chi)$ is zero in $E(A,F)$. Note that when $F=L\op
A^{n-1}$, $E(A,F)$ is same as the Euler class group $E(A,L)$ defined
in \cite{BR}.

\section{Preliminaries}

Let $A$ be a ring and let $M$ be an $A$-module. For $m\in M$, we
define $O_M(m) = \{ \varphi(m) | \varphi \in \Hom_A(M,A) \}$. We say
that $m$ is {\it unimodular} if $O_M(m) = A$. The set of all
unimodular elements of $M$ will be denoted by $\Um(M)$.  Note that if
$P$ is a projective $A$-module and $P$ has a unimodular element, then
$P\iso P_1\op A$.

Let $P$ be a projective $A$-module.  Given an element $\varphi\in P^*$
and an element $p\in P$, we define an endomorphism $\varphi_p$ as the
composite $P\by \varphi A \by p P$. If $\varphi(p)=0$, then
$\varphi_p^2=0$ and hence $1+\varphi_p$ is a unipotent automorphism of
$P$.

By a {\it transvection}, we mean an automorphism of $P$ of the form
$1+\varphi_p$, where $\varphi(p)=0$ and either $\varphi$ is unimodular
in $P^*$ or $p$ is unimodular in $P$. We denote by $EL(P)$ the
subgroup of $\Aut(P)$ generated by all the transvections of $P$. Note
that $EL(P)$ is a normal subgroup of $\Aut(P)$.

Recall that if $A$ is a ring of dimension $n$ and if $P$ is a
projective $A$-module of rank $n$, then any surjection $\ga : P \surj
J$ is called a {\it generic} surjection of $P$ if $J$ is an ideal of $A$ of
height $n$.

The following result is due to Bhatwadekar and Roy (\cite{B-Roy}, Proposition 4.1)

\begin{proposition}\label{B-Roy4.1}
Let $B$ be a ring and let $I$ be an ideal of $B$. Let $P$ be a
 projective $B$-module. Then any element of $EL(P/IP)$ can be lifted
 to an automorphism of $P$.
\end{proposition}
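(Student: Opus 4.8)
The plan is to reduce the lifting problem for an arbitrary element of $EL(P/IP)$ to the lifting problem for a single transvection, and then lift a transvection directly by lifting the data $(\varphi, p)$ that defines it.

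First I would observe that since $EL(P/IP)$ is generated by transvections of $P/IP$, and since the set of automorphisms of $P$ that reduce modulo $I$ into $EL(P/IP)$ is closed under composition, it suffices to prove the statement for a single transvection $\ol\gt = 1 + \ol\varphi_{\ol p}$ of $P/IP$, where $\ol\varphi \in (P/IP)^*$, $\ol p \in P/IP$, $\ol\varphi(\ol p) = 0$, and either $\ol\varphi$ is unimodular in $(P/IP)^*$ or $\ol p$ is unimodular in $P/IP$. By symmetry (replacing $P$ by $P^*$, which interchanges the two cases), I may assume $\ol p$ is unimodular in $P/IP$; the case where $\ol\varphi$ is unimodular is handled the same way applied to $P^*$, using the canonical isomorphism $P \iso P^{**}$.

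Next, the heart of the argument: I want to lift $\ol p$ to a unimodular element of some convenient module and lift $\ol\varphi$ to a homomorphism that kills it. Since $\ol p$ is unimodular in $P/IP$, choose any lift $p_0 \in P$ of $\ol p$; then $O_P(p_0) + I = A$, so by a standard argument we can modify $p_0$ by an element of $IP$ to get a lift $p \in P$ with $O_P(p) = A$, i.e. $p$ unimodular in $P$. Thus $P \iso P_1 \op A$ with $p$ corresponding to $(0,1)$. Now I need to lift $\ol\varphi$ to $\varphi \in P^*$ with $\varphi(p) = 0$: write $\ol\varphi$ in coordinates with respect to the decomposition $P/IP \iso (P_1/IP_1) \op (A/I)$; since $\ol\varphi(\ol p) = 0$, its $A/I$-component vanishes, so $\ol\varphi$ factors through $(P_1/IP_1)^*$. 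Lift that component arbitrarily to an element of $P_1^*$ and extend by zero on the $A$-summand to get $\varphi \in P^*$ with $\varphi(p) = 0$ and $\varphi$ reducing to $\ol\varphi$. Then $\gt := 1 + \varphi_p$ is a transvection of $P$ (since $p$ is unimodular in $P$), it is an automorphism of $P$, and it reduces modulo $I$ to $1 + \ol\varphi_{\ol p} = \ol\gt$.

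**The main obstacle** is the bookkeeping in the reduction step — making sure that the subgroup of $\Aut(P)$ consisting of automorphisms whose reduction lies in $EL(P/IP)$ really is a group (so that lifting the generators suffices), which is clear since the reduction map $\Aut(P) \to \Aut(P/IP)$ is a group homomorphism and $EL(P/IP)$ is a subgroup. A subtler point is the lift of $\ol\varphi$: one must be careful that extending a lift of the $(P_1/IP_1)^*$-component by zero on the $A$-factor genuinely produces an element of $P^*$ restricting correctly, and that the resulting transvection's two defining conditions (unimodularity of $p$, and $\varphi(p)=0$) are met — both of which follow immediately from the construction. The case distinction (whether $\ol p$ or $\ol\varphi$ is the unimodular datum) and the passage $P \rightsquigarrow P^*$ require only that $P$ is finitely generated projective, which is our standing hypothesis.
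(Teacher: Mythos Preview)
Your reduction to a single transvection and the duality between the two cases are sound, but the crucial step fails: the claim that a unimodular $\ol p \in P/IP$ can be lifted to a unimodular $p \in P$ is \emph{not} a standard argument---it is false in general. Take $B = \BR[x,y,z]/(x^2+y^2+z^2-1)$ and let $P$ be the kernel of the split surjection $(x,y,z):B^3 \to B$ (the algebraic tangent bundle of the real $2$-sphere). Then $P$ is projective of rank $2$ and, by the hairy ball theorem, has no unimodular element whatsoever. Yet for any maximal ideal $I$ of $B$ one has $P/IP \cong \BR^2$, which is full of unimodular elements; none of them lifts to a unimodular element of $P$. So your construction of the lift breaks down precisely at the point you labeled ``standard.''

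The paper does not supply a proof here, only the citation to Bhatwadekar--Roy; but the correct argument is close to yours with one change: do not insist that the lift be a transvection. The proposition asks only for an \emph{automorphism}. Lift $\ol p$ to any $p \in P$ and $\ol\gf$ to any $\gf_0 \in P^*$; then $\gf_0(p) \in I$. Since $\ol p$ is unimodular, pick $\ol\psi \in (P/IP)^*$ with $\ol\psi(\ol p)=1$ and lift it to $\psi \in P^*$ (possible since $P^*/IP^* \cong (P/IP)^*$ for $P$ finitely generated projective), so that $\psi(p) \equiv 1 \pmod I$. Now set
\[
\gf := \psi(p)\,\gf_0 - \gf_0(p)\,\psi.
\]
Then $\gf$ reduces to $\ol\gf$ modulo $I$ (as $\psi(p)\equiv 1$ and $\gf_0(p)\equiv 0$), and $\gf(p)=\psi(p)\gf_0(p)-\gf_0(p)\psi(p)=0$. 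Hence $1+\gf_p$ is unipotent, so it is an automorphism of $P$, and it reduces to $1+\ol\gf_{\ol p}$. Note that neither $p$ nor $\gf$ need be unimodular, so this lift is generally not a transvection of $P$; that is exactly the concession the statement allows.
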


We state some results from \cite{BR} for later use.

\begin{lemma}\label{3.0}
(\cite{BR}, Lemma 3.0) Let $A$ be a ring of dimension $n$ and let $P$
be a projective $A$-module of rank $n$. Let $\gl:P\surj J_0$ and
$\mu:P\surj J_1$ be two surjections, where $J_0$ and $J_1$ are ideals
of $A$ of height $n$. Then there exists an ideal $I$ of $A[T]$ of
height $n$ and a surjection $\ga(T): P[T] \surj I$ such that
$I(0)=J_0$, $I(1)=J_1$, $\ga(0)=\gl$ and $\ga(1)=\mu$.
\end{lemma}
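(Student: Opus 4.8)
\medskip
\noindent\emph{Approach.}
The plan is to interpolate between $\gl$ and $\mu$ by a perturbed straight line in $\Hom_{A[T]}(P[T],A[T])$ and to choose the perturbation so that the image ideal has height exactly $n$. Regard $\gl,\mu$ as elements of $P^*=\Hom(P,A)$, extend scalars to $A[T]$, and for $\psi\in P^*$ (to be pinned down) set
$$\ga(T)\ :=\ (1-T)\,\gl+T\,\mu+T(1-T)\,\psi\ \in\ \Hom_{A[T]}(P[T],A[T]),\qquad I\ :=\ \ga(T)(P[T]).$$
Then $\ga(T):P[T]\surj I$ by definition, and for \emph{every} choice of $\psi$ we automatically get $\ga(0)=\gl$, $\ga(1)=\mu$, $I(0)=\gl(P)=J_0$ and $I(1)=\mu(P)=J_1$. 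Hence the whole content of the lemma collapses to: choose $\psi$ so that $\hh I=n$ in $A[T]$.

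The bound $\hh I\leq n$ is automatic, since $I$ is a surjective image of the rank-$n$ projective module $P[T]$, hence locally generated by at most $n$ elements, so by Krull's height theorem every prime minimal over $I$ has height $\leq n$. For $\hh I\geq n$, let $\mathfrak P\subseteq A[T]$ be a prime containing $I$ with $\hh\mathfrak P<n$, put $\mathfrak p=\mathfrak P\cap A$; from $\hh\mathfrak P\in\{\hh\mathfrak p,\hh\mathfrak p+1\}$ we get $\hh\mathfrak p\leq n-1$, and since $\hh J_0=\hh J_1=n=\dim A$ this forces $J_0,J_1\not\subseteq\mathfrak p$, so the induced functionals $\ol\gl,\ol\mu$ on $P\ot\kappa(\mathfrak p)$ are nonzero --- this is precisely where the two height hypotheses enter. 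Now $I\subseteq\mathfrak P$ makes the image of $\ga(T)$ in $\kappa(\mathfrak p)[T]$, which is generated by the polynomials $(1-T)\ol\gl(e)+T\ol\mu(e)+T(1-T)\ol\psi(e)$ with $e\in P\ot\kappa(\mathfrak p)$, a proper ideal; so these polynomials have a common root $c\in\overline{\kappa(\mathfrak p)}$, i.e.\ the functional $(1-c)\ol\gl+c\ol\mu+c(1-c)\ol\psi$ vanishes. Because $\ol\gl,\ol\mu\neq 0$ we cannot have $c=0$ or $c=1$, so $c\neq 0,1$ and $\ol\psi=-\ol\gl/c-\ol\mu/(1-c)$. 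In other words the dangerous $\mathfrak p$ are exactly those at which $\ol\psi$ lands in a certain proper closed subset $Z(\mathfrak p)\subsetneq(P\ot\kappa(\mathfrak p))^*$ depending only on $\ol\gl,\ol\mu$ --- the line $\kappa(\mathfrak p)\,\ol\gl$ if $\ol\gl\parallel\ol\mu$, and a rational curve inside the plane $\langle\ol\gl,\ol\mu\rangle$ otherwise --- and it is proper exactly because $\operatorname{rank}P^*=n\geq 2$. It therefore suffices to find $\psi\in P^*$ with $\ol\psi\notin Z(\mathfrak p)$ for \emph{every} prime $\mathfrak p$ of height $\leq n-1$.

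Producing such a $\psi$ is the one real difficulty, and it is where the standard machinery of moving lemmas must be invoked: over a general Noetherian base one cannot simply take a ``generic'' $\psi$, so one either (i) first connects $\mu$ to a better surjection $\mu'$ by a separate height-$n$ ideal of $A[T]$ (a moving lemma) so that afterwards the locus where $\ol\gl$ and $\ol{\mu'}$ are proportional is finite, reducing the avoidance of the sets $Z(\mathfrak p)$ to an honest prime-avoidance, or (ii) corrects an arbitrary first choice $\psi_0$ along the closed locus where it fails by the basic-element technique of Eisenbud--Evans (equivalently Swan's Bertini theorem), enlarging the perturbation to $T(1-T)\psi(T)$ with $\psi(T)\in P^*[T]$ if more room is needed. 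One should bear in mind that $\gl$ is not unimodular in $P^*$ (its image is $J_0\neq A$), so there is no splitting of $P^*$ along $\gl$ to exploit and one argues directly with the ideals $\gl(P),\mu(P)$. Once $\psi$ is chosen, $\ga(T):P[T]\surj I$ is a surjection onto a height-$n$ ideal of $A[T]$ with $I(0)=J_0$, $I(1)=J_1$, $\ga(0)=\gl$, $\ga(1)=\mu$, which is the assertion. (Proposition~\ref{B-Roy4.1} is not needed for this lemma, though the same circle of ideas recurs in the later arguments.)
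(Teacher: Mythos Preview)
The paper does not supply a proof of this lemma; it is quoted from \cite{BR} (Lemma~3.0) and used as a black box, so there is no in-paper argument to compare against. Judged on its own, your proposal has the correct architecture --- linear interpolation $\gamma(T)=(1-T)\gl+T\mu$ plus a $T(1-T)$-perturbation to repair the height --- and this is indeed the shape of the argument in \cite{BR}. Your observations that the specializations at $T=0,1$ are automatic, that $\hh I\le n$ is forced by rank, and that the obstruction to $\hh I\ge n$ lives over primes $\mathfrak p\subset A$ of height $\le n-1$ (where $\ol\gl,\ol\mu\neq 0$), are all correct.

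But the proof is not finished. You explicitly concede that ``producing such a $\psi$ is the one real difficulty'' and then list two strategies (pre-move $\mu$; invoke Eisenbud--Evans/Swan--Bertini) without executing either. That step \emph{is} the lemma. Your fibrewise description of the bad locus $Z(\mathfrak p)$ as a line or rational curve in $\langle\ol\gl,\ol\mu\rangle$ is a pleasant heuristic, but it does not by itself yield a global $\psi$ avoiding all $Z(\mathfrak p)$ simultaneously --- there are infinitely many $\mathfrak p$, and you never bridge that gap. The efficient closure (and essentially what \cite{BR} does) is: first check that $\hh\bigl(\gamma(T)(P[T])+T(1-T)A[T]\bigr)\ge n$ in $A[T]$, since any prime containing both contains $(J_0,T)$ or $(J_1,T-1)$, each of height $n+1$; then apply the Eisenbud--Evans moving lemma in the precise form ``for $B$ Noetherian, $P$ projective of rank $n$, $\phi\in\Hom_B(P,B)$ and $a\in B$ with $\hh(\phi(P)+aB)\ge n$, there exists $\psi\in\Hom_B(P,B)$ with $\hh((\phi+a\psi)(P))\ge n$'' to $B=A[T]$, $\phi=\gamma(T)$, $a=T(1-T)$. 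Until a statement of this type is actually stated and invoked (or proved), your write-up is a sketch rather than a proof.
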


For a rank $1$ projective $A$-module $L$ and $P'=L\op A^{n-1}$, the
following result is proved in (\cite{BR}, Proposition 3.1). Since the
same proof works in our case, we omit the proof.

\begin{proposition}\label{3.1}
Let $A$ be a ring of dimension $n\geq 2$ such that $(n-1)!$ is a unit
in $A$. Let $P$ and $P^\prime =Q\op A$ be projective $A$-modules of
rank $n$ and let $\chi : \wedge^n P \iso \wedge^n P^\prime$ be an
isomorphism. Suppose that $\ga(T) : P[T] \surj I$ be a surjection,
where $I$ is an ideal of $A[T]$ of height $n$. Then there exists a
homomorphism $\phi : P^\prime \ra P$, an ideal $K$ of $A$ of height
$\geq n$ which is comaximal with $(I\cap A)$ and a surjection $\rho(T)
: P^\prime[T] \surj I\cap KA[T]$ such that the following holds:

$(i)$ $\wedge^n( \phi)=u \chi$, where $u=1$ modulo $I\cap A$.

$(ii)$ $(\ga(0)\circ \phi)(P^\prime)=I(0)\cap K$.

$(iii)$ $(\ga(T)\circ \phi(T)) \ot A[T]/I = \rho(T)\ot A[T]/I$.

$(iv)$ $\rho(0)\ot A/K=\rho(1)\ot A/K$.

\end{proposition}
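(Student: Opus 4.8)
The plan is to follow (\cite{BR}, Proposition 3.1) step by step; I will describe the three main stages. The underlying idea is to transport the surjection $\ga(T)$ from $P[T]$ to $P'[T]$ along a determinant-compatible homomorphism $\phi : P'[T] \ra P[T]$, to read off the residual ideal $K$ from the image of the composite $\ga(T)\circ\phi$, and then to replace that composite by a surjection $\rho(T)$ onto $I\cap KA[T]$ adjusted so that the prescribed behaviour at $T=0,1$ holds.

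\emph{Stage 1 (the map $\phi$).} First I would reduce modulo $I$. Since $\hh I=n$ and $\dim A[T]=n+1$, the ring $\ol B:=A[T]/I$ has dimension $\leq 1$, so the rank-$n$ projective $\ol B$-modules $\ol P:=P[T]/IP[T]$ and $\ol{P'}:=P'[T]/IP'[T]$ each split off a free summand of corank one, $\ol P\cong\ol B^{\,n-1}\op\wedge^n\ol P$ and $\ol{P'}\cong\ol B^{\,n-1}\op\wedge^n\ol{P'}$ (here $n\geq 2>\dim\ol B$ is used). Base changing $\chi$ to $A[T]$ and reducing modulo $I$ gives an isomorphism between $\wedge^n\ol{P'}$ and $\wedge^n\ol P$ (recall $\wedge^nP'=\wedge^n(Q\op A)=\wedge^{n-1}Q$), and splicing it with the identity of $\ol B^{\,n-1}$ produces an isomorphism $\ol\theta$ between $\ol{P'}$ and $\ol P$ whose top exterior power differs from $\ol\chi$ by a unit of $\ol B$. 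This is exactly where the hypothesis $(n-1)!\in A^*$ enters, as in \cite{BR}: it lets one absorb that unit into an automorphism of the free summand, so that one may assume $\wedge^n\ol\theta=\ol\chi$. Lifting $\ol\theta$ to a homomorphism $\phi:P'[T]\ra P[T]$ --- possible since $P'[T]$ is projective --- and controlling the lift so that $\wedge^n\phi$ stays constant in $T$, one obtains $(i)$ with $u\equiv 1$ modulo $I\cap A$; note that $\phi$ reduces modulo $I$ to the isomorphism $\ol\theta$, so $\ga(T)\circ\phi$ is surjective modulo $I$.

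\emph{Stage 2 (the ideal $K$).} Put $\gb(T):=\ga(T)\circ\phi:P'[T]\ra A[T]$ and let $I_1:=\gb(T)(P'[T])$. Since $\ga(T)$ maps onto $I$ and $\ol\theta$ is an isomorphism, $I_1\subseteq I$ and $I_1+I^2=I$; localizing at the primes containing $I$ and applying Nakayama, $I_1$ and $I$ coincide at every such prime. Now the shape $P'=Q\op A$ enters: the free summand provides a degree of freedom (perturbing the image of its generator by a suitable element of $P$) which, by a general position argument as in \cite{BR} --- here $\dim A=n$ is used --- yields an ideal $K$ of $A$ of height $\geq n$, comaximal with $I\cap A$, together with a surjection $\rho(T):P'[T]\surj I\cap KA[T]$ that reduces modulo $I$ to $\gb(T)$ and restricts at $T=0$ to $\ga(0)\circ\phi$; this gives $(ii)$ and $(iii)$ simultaneously, without disturbing $(i)$.

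\emph{Stage 3 (the boundary condition $(iv)$), and the main obstacle.} What remains is to carry out Stage 2 so that, in addition, $\rho(0)$ and $\rho(1)$ become congruent modulo $K$. This is the heart of the matter: one must run the construction of $\rho(T)$ through the homotopy supplied by (a variant of) Lemma~\ref{3.0}, lifting the required elementary automorphisms to $\Aut$ by Proposition~\ref{B-Roy4.1}, while simultaneously keeping the residual ideal $K$ of height $\geq n$, independent of $T$, and comaximal with $I\cap A$. Balancing the general position requirements against the need for the prescribed values at $T=0$ and $T=1$ is precisely the technical core of (\cite{BR}, Proposition 3.1); the rest, namely the exterior-power bookkeeping for $(i)$ and the comaximality assertions, is formal, and --- as the text notes --- nothing changes in passing from $P'=L\op A^{n-1}$ to a general $P'=Q\op A$.
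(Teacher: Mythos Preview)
The paper omits the proof entirely, noting that the argument of \cite[Proposition~3.1]{BR} goes through verbatim when $L \oplus A^{n-1}$ is replaced by $Q \oplus A$. Your outline follows the right overall strategy --- transport $\ga(T)$ along a determinant-compatible $\phi$, extract a residual $K$, then build $\rho(T)$ --- but Stage~1 has a genuine gap. You reduce modulo $I \subset A[T]$ and lift $\ol\theta$ to a map $\phi : P'[T] \to P[T]$; the statement, however, requires $\phi : P' \to P$ over $A$ (with $\phi(T)$ denoting its scalar extension), and condition $(i)$ asks for $u \in A$ with $u \equiv 1$ modulo the ideal $I \cap A$ of $A$. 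The fix, as in \cite{BR}, is to reduce modulo $I \cap A$ rather than $I$: since $A/(I \cap A)$ embeds in $A[T]/I$ it too has dimension $\leq 1$, so over it both $P$ and $P'$ split as a free module of rank $n-1$ plus the determinant line; build $\ol\theta$ there with $\wedge^n \ol\theta = \ol\chi$ and lift to $\phi : P' \to P$. Your attempted patch (``controlling the lift so that $\wedge^n\phi$ stays constant in $T$'') does not produce a map defined over $A$.

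A smaller point: the $(n-1)!$ hypothesis is not needed for the determinant adjustment you describe --- once a free summand of rank $\geq 1$ is present, one simply scales a free coordinate by the discrepancy unit to force $\wedge^n\ol\theta=\ol\chi$. In \cite{BR} the hypothesis enters in a different step (and is in fact removable, as remarked there). Your Stage~3 is, as you candidly say, a deferral to the technical core of \cite{BR}; since the present paper does the same, that is acceptable once Stage~1 is repaired.
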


The following result is labeled as addition principle (\cite{BR}, Theorem 3.2).

\begin{theorem}\label{addition}
Let $A$ be a ring of dimension $n\geq 2$ and let $J_1,J_2$ be two
comaximal ideals of $A$ of height $n$.  Let $P=P_1\op A$ be a
projective $A$-module of rank $n$ and let $\Phi: P\surj J_1$ and
$\Psi:P\surj J_2$ be two surjections. Then, there exists a surjection
$\Theta : P\surj J_1\cap J_2$ such that $\Phi\ot A/J_1=\Theta \ot
A/J_1$ and $\Psi\ot A/J_2=\Theta\ot A/J_2$.
\end{theorem}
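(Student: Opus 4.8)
The goal is the "addition principle": given comaximal height-$n$ ideals $J_1, J_2$ of $A$ and surjections $\Phi : P \surj J_1$, $\Psi : P\surj J_2$ from a projective module $P = P_1 \op A$ of rank $n$, produce a surjection $\gT : P \surj J_1 \cap J_2$ agreeing with $\Phi$ on $A/J_1$ and with $\Psi$ on $A/J_2$. Since this is Theorem~3.2 of \cite{BR}, I will only sketch the strategy. The plan is to build the desired surjection by a homotopy argument: first connect $\Phi$ and $\Psi$ through an ideal in one more variable, then use Proposition~\ref{3.1} to descend the homotopy to an ideal of $A$ whose value at $T=0$ and $T=1$ gives the two pieces $J_1$ and $J_2$, and finally patch.

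First I would apply Lemma~\ref{3.0} to the two surjections $\Phi : P \surj J_1$ and $\Psi : P \surj J_2$: this yields an ideal $\CI \subset A[T]$ of height $n$ together with a surjection $\ga(T) : P[T] \surj \CI$ with $\CI(0) = J_1$, $\CI(1) = J_2$, $\ga(0) = \Phi$, $\ga(1) = \Psi$. The point of passing to $A[T]$ is that $\CI$ is a single height-$n$ ideal interpolating between $J_1$ and $J_2$, so that a statement about $\CI$ at $T=0$ and $T=1$ simultaneously encodes information about both $J_1$ and $J_2$.

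Next I would feed $\ga(T)$ into Proposition~\ref{3.1}, taking $P' = P$ and $\chi : \wedge^n P \iso \wedge^n P$ the identity (note $P = P_1\op A$ has a free rank-one summand, as required, and we may assume $(n-1)!$ is invertible or otherwise reduce to that case as in \cite{BR}). This produces a homomorphism $\phi : P \ra P$ with $\wedge^n\phi = u\cdot\mathrm{id}$ for $u \equiv 1 \bmod (\CI\cap A)$, an ideal $K$ of height $\geq n$ comaximal with $\CI\cap A$, and a surjection $\rho(T) : P[T] \surj \CI \cap KA[T]$ satisfying the four compatibility conditions (i)–(iv). Evaluating $\rho$ at $T=0$ gives a surjection onto $J_1 \cap K$ and at $T=1$ a surjection onto $J_2 \cap K$; condition~(iv) says these agree modulo $K$, and condition~(iii) together with (ii) controls how $\rho$ relates back to $\Phi$ and $\Psi$ modulo $\CI$. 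The key extraction step is then: since $K$ is comaximal with both $J_1$ and $J_2$, one splits off the $K$-part — using that a surjection onto $J_i \cap K$ with prescribed behavior modulo $J_i$ can be modified (by an automorphism of $P$, invoking Proposition~\ref{B-Roy4.1} to lift elementary automorphisms, and adjusting by the unit $u$) to a surjection onto $J_i$ alone agreeing with $\Phi$ resp.\ $\Psi$ modulo $J_i$ — and patches the two resulting maps over the comaximal decomposition to get $\gT : P \surj J_1 \cap J_2$.

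The main obstacle is the last patching/extraction step: getting from the surjection $\rho(T)$ onto $\CI \cap KA[T]$ to an honest surjection onto $J_1\cap J_2$ with the exact prescribed residues modulo $J_1$ and modulo $J_2$. This requires carefully using comaximality to excise $K$, correcting the resulting maps by the unit $u$ (which is $\equiv 1$ only modulo $\CI\cap A$, so one must check it does not disturb the residue conditions), and lifting an elementary automorphism of $P$ modulo an appropriate ideal via Proposition~\ref{B-Roy4.1} so that the two local pieces glue to a global surjection. Everything else is a fairly mechanical assembly of the cited results.
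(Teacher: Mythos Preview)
The paper does not supply its own proof of this statement; it is quoted from \cite{BR}, Theorem~3.2, where the addition principle is established by a direct, elementary argument exploiting the splitting $P=P_1\op A$. Roughly, one puts the $P_1$-components of $\Phi$ and $\Psi$ into general position (so that their images have height $\geq n-1$) and then writes down explicitly a map into $J_1\cap J_2$ with the correct residues, checking surjectivity by hand. In particular the proof in \cite{BR} does \emph{not} pass through $A[T]$, does not invoke Lemma~\ref{3.0} or Proposition~\ref{3.1}, and does not require $(n-1)!\in A^\times$.

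Your proposed route has a genuine gap that makes it circular. After applying Proposition~\ref{3.1} you obtain surjections $\rho(0):P\surj J_1\cap K$ and $\rho(1):P\surj J_2\cap K$ that agree modulo $K$. You then propose to ``split off the $K$-part'' from each, recovering surjections $P\surj J_1$ and $P\surj J_2$ with the prescribed residues, and finally to ``patch the two resulting maps over the comaximal decomposition'' to get $\Theta:P\surj J_1\cap J_2$. But this last patching step --- manufacturing a single surjection onto $J_1\cap J_2$ out of two separate surjections onto the comaximal ideals $J_1$ and $J_2$ with prescribed residues --- is \emph{precisely} the addition principle you set out to prove. The detour through $A[T]$ and the auxiliary ideal $K$ has bought nothing: after excising $K$ you are back at (maps with the same residues as) the original data $(\Phi,\Psi)$. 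A secondary problem is that Proposition~\ref{3.1} carries the hypothesis $(n-1)!\in A^\times$, which Theorem~\ref{addition} does not assume; in \cite{BR} no such reduction is made for the addition principle, because the direct proof needs no such hypothesis.
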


The following theorem is labeled as subtraction principle (\cite{BR}, Theorem 3.3).

\begin{theorem}\label{3.3}
Let $A$ be a ring of dimension $n\geq 2$ and let $J$ and $J^\prime$ be
two comaximal ideals of $A$ of height $\geq n$ and $n$
respectively. Let $P$ and $P^\prime=Q\op A$ be projective $A$-modules
of rank $n$ and let $\chi: \wedge^n P\iso \wedge^n P^\prime$ be an
isomorphism. Let $\ga : P\surj J\cap J'$ and $\gb : P^\prime \surj J'$
be surjections. Let "bar" denote reduction modulo $J'$ and let $\ol
\ga : \ol P \surj J'/J'^2$ and $\ol \gb: \ol {P^\prime} \surj J'/J'^2$
be surjections induced from $\ga$ and $\gb$ respectively.  Suppose
there exists an isomorphism $\delta : \ol P \iso \ol {P^\prime}$ such
that $\ol \gb \delta =\ol \ga$ and $\wedge^n(\delta) = \ol \chi$. Then
there exists a surjection $\theta : P\surj J$ such that $\theta \ot
A/J=\ga \ot A/J$.
\end{theorem}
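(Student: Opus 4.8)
\medskip

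\noindent\textbf{Proof proposal.}
This is the ``subtraction principle'', dual to the addition principle (Theorem~\ref{addition}): one wants to cancel the factor $J'$ from the surjection $\ga:P\surj J\cap J'$, using as the thing being subtracted the surjection $\gb$ of $P'=Q\op A$, which by the $\gd$-hypothesis realizes along $J'$ the same local datum as $\ga$ does (through $\chi$). The plan is to reverse the addition principle, using Proposition~\ref{3.1} — this is where, in the generality $P'=Q\op A$, the hypothesis that $(n-1)!$ be a unit would enter — together with Lemma~\ref{3.0} and Proposition~\ref{B-Roy4.1} as the mechanism for transporting surjections between $P$ and $P'$ over $A[T]$. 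Note at the outset that, since $J+J'=A$, one has $J\cap J'=JJ'$, and that the hypotheses on $\gd$ say precisely that, read modulo $J'$, the surjection $\ga$ of $P$ transported through $\chi$ and the surjection $\gb$ of $P'$ induce the same surjection onto $J'/J'^2$.

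\smallskip
First I would promote $\ga:P\surj J\cap J'$ to a family. By Lemma~\ref{3.0} applied to $\ga$ and a suitably chosen generic surjection of $P$, together with a moving-lemma argument keeping things in general position, I would obtain an ideal $\mathcal I\subset A[T]$ of height $n$ and a surjection $\gG(T):P[T]\surj\mathcal I$ with $\gG(0)=\ga$ and $\mathcal I(0)=J\cap J'$. Feeding $\gG(T)$ into Proposition~\ref{3.1} produces a homomorphism $\phi:P'\ra P$ with $\wedge^n\phi=u\chi$ (hence $\ol u=1$ in $A/(\mathcal I\cap A)$, and since $\mathcal I\cap A\subseteq\mathcal I(0)=J\cap J'\subseteq J'$ also $\ol u=1$ in $A/J'$), an ideal $K\subset A$ of height $\ge n$ comaximal with $\mathcal I\cap A$ — hence comaximal with both $J$ and $J'$ — and a surjection $\rho(T):P'[T]\surj\mathcal I\cap KA[T]$ with $(\gG(0)\phi)(P')=(J\cap J')\cap K$, with $\gG(T)\phi(T)\equiv\rho(T)$ modulo $\mathcal I$, and with $\rho(0)\equiv\rho(1)$ modulo $K$. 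Put $\ga_1:=\gG(0)\phi:P'\surj(J\cap J')\cap K$. Because $\wedge^n\phi=u\chi$ with $\ol u=1$ in $A/J'$, the surjection $\ga_1$ induces modulo $J'$ the same orientation on $J'/J'^2$ as $\ga$; invoking the $\gd$-hypothesis and absorbing the discrepancy between $\gd$ and $\phi$ modulo $J'$ into a transvection via Proposition~\ref{B-Roy4.1}, I would arrange that $\ga_1$ and $\gb$ agree modulo $J'$. Now $\ga_1$ and $\gb$ are surjections of $P'=Q\op A$ agreeing modulo $J'$, so, running the addition principle in reverse on $P'$, I would strip the $J'$-component off $\ga_1$ and obtain a surjection $P'\surj J\cap K$ whose reduction modulo $J$ equals that of $\ga_1$.

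\smallskip
It remains to carry this back to $P$ and discard the auxiliary ideal. Using the $T=0$ versus $T=1$ comparison built into $\rho(T)$ — it is constant modulo $K$ and matches $\gG(T)\phi(T)$ modulo $\mathcal I$ — I would transport the surjection $P'\surj J\cap K$ to a surjection $\theta_1:P\surj J\cap K$ with $\theta_1\ot A/J=\ga\ot A/J$; a final application of the addition principle then removes $K$, which was chosen of height $\ge n$ and comaximal with $J$, leaving the desired $\theta:P\surj J$ with $\theta\ot A/J=\ga\ot A/J$. The step I expect to be the genuine obstacle is the transfer between $P$ and $P'$: the isomorphism $\gd$ lives only modulo $J'$ and cannot be lifted to an isomorphism $P\iso P'$ (a rank count shows any lift of $\gd$ would have to be one), so $\ga$ cannot simply be pushed forward along $\gd^{-1}$; everything must be threaded through the $A[T]$-deformation $\phi$ of Proposition~\ref{3.1}, and the delicate work is to run the moving lemma so that the auxiliary ideal $K$ it produces stays of height $\ge n$ and comaximal with $J$ and $J'$ while keeping simultaneous control of the reductions modulo $J$ and modulo $J'$.
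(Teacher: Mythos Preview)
The paper does not give its own proof of this statement; it is quoted from \cite{BR}, Theorem~3.3, as a preliminary result, so there is nothing to compare against in the paper itself. The proof in \cite{BR} is direct: it exploits the free summand of $P'=Q\oplus A$ by writing $\gb=(\gb_1,s)$, lifts $\gd$ to a map $P\to P'$, and builds $\theta$ by explicit manipulation using $J+J'=A$. It does not pass through an $A[T]$-deformation, does not invoke Proposition~\ref{3.1}, and introduces no auxiliary ideal $K$.

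Your proposal, by contrast, has a genuine circularity. The crucial middle step reads: ``running the addition principle in reverse on $P'$, I would strip the $J'$-component off $\ga_1$ and obtain a surjection $P'\surj J\cap K$.'' But stripping the factor $J'$ from a surjection $P'\surj (J\cap K)\cap J'$, given a compatible surjection $P'\surj J'$, is exactly an instance of the subtraction principle --- the very theorem you are proving --- with both projective modules equal to $P'$ and $\chi$ the identity. You give no reason why this special case should be any easier, and indeed the work in \cite{BR} lies precisely in handling the module $P'$ with its free summand; taking $P=P'$ does not short-circuit it. The same slip recurs at the end: ``a final application of the addition principle then removes $K$'' --- removing a factor is subtraction, not addition, so this too is the statement under proof.

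There is also a hypothesis mismatch you flag but do not resolve: routing the argument through Proposition~\ref{3.1} imports the assumption that $(n-1)!\in A^{\times}$, which is absent from Theorem~\ref{3.3}. Even if the circularity above were repaired, your argument would establish only this weaker statement.
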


\begin{lemma}\label{6.7}
(\cite{BR}, Proposition 6.7) Let $A$ be a ring of dimension $n$ and
let $P,P^\prime$ be stably isomorphic projective $A$-modules of rank
$n$. Then there exists an ideal $J$ of $A$ of height $\geq n$ such
that $J$ is a surjective image of both $P$ and $P^\prime$. Further,
given any ideal $K$ of height $\geq 1$, $J$ can be chosen to be
comaximal with $K$.
\end{lemma}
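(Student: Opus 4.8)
The plan is to use the stable isomorphism to transport a generic surjection of $P$ to the $P'$ side, and then to push it down from $P'\op A$ to $P'$ by working over the Artinian ring obtained by killing the relevant ideal.

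First I would reduce to the case $P\op A\cong P'\op A$: if $P\op A^t\cong P'\op A^t$ with $t\ge 2$, then $P\op A^{t-1}$ has rank $n+t-1>\dim A$, so Bass' cancellation theorem lets us cancel one copy of $A$, and iterating brings us to $t=1$. Fix an isomorphism $\gf:P\op A\iso P'\op A$. Using the standard existence of generic surjections (a projective module of rank $n=\dim A$ maps onto an ideal of height $n$, which may be taken comaximal with any prescribed ideal), choose $\ga:P\surj J_1$ with $\hh J_1\ge n$ and $J_1+K=A$. As the source has rank $n$, the ideal $J_1$ is locally $n$-generated, and since $\hh J_1=n$ this forces $J_1$ to be of pure height $n$ and a local complete intersection. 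Extend $\ga$ to $\wt\ga:P\op A\surj J_1$, $(p,t)\mapsto\ga(p)$, and transport along $\gf$ to the surjection $\gb:=\wt\ga\gf^{-1}:P'\op A\surj J_1$.

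The core of the argument is to descend $\gb$ from $P'\op A$ to $P'$, and here I would work modulo $J_1$. Since $\hh J_1=n=\dim A$, the ring $\ol A:=A/J_1$ is Artinian; hence $\ol{P'}:=P'/J_1P'$ and $\ol A$ are free $\ol A$-modules (of ranks $n$ and $1$), and $J_1/J_1^2$ is free of rank $n$ over $\ol A$ because $J_1$ is a local complete intersection. Thus $\ol\gb:\ol{P'}\op\ol A\surj J_1/J_1^2$ is a surjection of free $\ol A$-modules of ranks $n+1$ and $n$; its kernel is stably free of rank $1$, hence free on a unimodular vector of $\ol{P'}\op\ol A$. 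Since $n+1\ge\dim\ol A+2=2$, the group $\EL(\ol{P'}\op\ol A)$ acts transitively on unimodular elements, so after composing $\ol\gb$ with a suitable $\ol\gs\in\EL(\ol{P'}\op\ol A)$ we may assume its kernel is exactly the $\ol A$-summand; then $\ol\gb\ol\gs$ kills the $\ol A$-summand and restricts to an isomorphism $\ol{P'}\iso J_1/J_1^2$. By Proposition \ref{B-Roy4.1} the automorphism $\ol\gs$ lifts to $\gs\in\Aut(P'\op A)$, and replacing $\gb$ by $\gb\gs$ we arrange that $\gb|_{P'}:P'\to A$ is surjective modulo $J_1^2$. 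Then $J_2:=(\gb|_{P'})(P')\subseteq J_1$ satisfies $J_2+J_1^2=J_1$, so (Nakayama) $(J_2)_\p=(J_1)_\p$ at every prime $\p\supseteq J_1$; and $J_2$, being a surjective image of the rank-$n$ module $P'$, is locally $n$-generated. A further standard generic modification of $\gb|_{P'}$ over the complement of $V(J_1)$, together with the usual comaximal adjustments, then lets us take $J_2$ of pure height $n$, comaximal with $K$, and still equal to $J_1$ locally at every point of $V(J_1)$. This gives $P'\surj J_2$.

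The remaining task — cleaning $J_2$ up and realizing it also as a surjective image of $P$ — is where I expect the genuine difficulty to lie. Both $\ga:P\surj J_1$ and $\gb|_{P'}:P'\surj J_2$ reduce, modulo $J_1^2$, to isomorphisms onto the conormal module $J_1/J_1^2$ (for $\ga$ because a surjection between isomorphic finitely generated modules is an isomorphism), so the two surjections agree near $V(J_1)$ up to an isomorphism of conormal modules. I would then use the homotopy Lemma \ref{3.0} (passing to $A[T]$ and specializing) to split $J_1$ into $J_2$ and a comaximal complementary piece, and feed the matching data into the subtraction principle (the method of Theorem \ref{3.3}) to obtain $P\surj J_2$ as well; the ideal $J:=J_2$ is then a surjective image of both $P$ and $P'$, has height $\ge n$, and is comaximal with $K$. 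The essential obstacle throughout is that a surjection from a module $N\op A$ onto an ideal need not restrict to a surjection on $N$, so neither $\gb$ nor a crude modification of it yields a surjection out of $P'$ directly: the surgery modulo $J_1$ and the concluding subtraction against $J_1$ are exactly what is needed to get past this.
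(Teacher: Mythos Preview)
The paper does not supply a proof of this lemma; it is stated in the preliminaries with a bare citation to \cite{BR}, Proposition~6.7. So there is nothing in the present paper to compare your argument against, and I can only assess the proposal on its own terms.

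Your overall strategy---transport a generic surjection across the stable isomorphism and descend from $P'\op A$ to $P'$ by working over the Artinian ring $A/J_1$---is the right shape. Two points, however, are genuine gaps. First, the assertion that $J_1/J_1^{2}$ is free of rank $n$ because $J_1$ is ``a local complete intersection'' is unjustified: an ideal that is locally $n$-generated of height $n$ need not be locally generated by a regular sequence unless $A$ is Cohen--Macaulay, and without that the conormal module need not be free. This particular step is easily repaired: rather than analysing $\ker\ol\gb$, note that $\ker(\ga,0)=\ker\ga\op A$ already contains the unimodular element $(0,1)$, whose image under $\gf$ is a unimodular element of $\ker\gb$; over the Artinian ring $\ol A$ this can be moved to $(0,1)$ by an element of $\EL(\ol{P'}\op\ol A)$ and then lifted via Proposition~\ref{B-Roy4.1}, and a direct computation shows the $A$-component of the resulting surjection lies in $J_1^{2}$, with no freeness of $J_1/J_1^{2}$ required. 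Second, and more seriously, your final paragraph does not work as written. Theorem~\ref{3.3} requires the auxiliary projective to be of the form $Q\op A$, and neither $P$ nor $P'$ is assumed to have a free direct summand; the addition principle (\ref{addition}) has the same restriction. So you cannot invoke the subtraction/addition machinery to pass from $P\surj J_1$ and $P'\surj J_2$ to a common target, and the phrase ``split $J_1$ into $J_2$ and a comaximal complementary piece'' is incoherent when $J_2\subseteq J_1$ by construction. Your sketch does not yet contain the idea that closes this gap; you should consult \cite{BR} directly for the actual endgame.
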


We state the following result from (\cite{BK}, Proposition 2.11) for later use.

\begin{proposition}\label{BK2.11}
Let $A$ be a ring and let $I$ be an ideal of $A$ of height $n$. Let
$f\in A$ be a non-zerodivisor modulo $I$ and let $P=P_1\op A$ be a
projective $A$-module of rank $n$. Let $\ga:P \ra I$ be a linear map
such that the induced map $\ga_f:P_f \surj I_f$ is a surjection. Then,
there exists $\Psi \in EL(P_f^*)$ such that

$(i)$ $\gb=\Psi(\ga) \in P^*$ and

$(ii)$ $\gb(P)$ is an ideal of $A$ of height $n$ contained in $I$.
\end{proposition}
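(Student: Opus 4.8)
\proof The plan is to work throughout with the splitting $P = P_1 \op A$. Write $\ga = (\ga_1, a)$ with $\ga_1 \in P_1^*$ and $a = \ga(0,1) \in I$, so that $J_0 := \ga(P) = \ga_1(P_1) + aA$ is an ideal contained in $I$. Since $\ga_f$ is onto $I_f$ we have $(J_0)_f = I_f$; hence, if a prime $\p$ of $A$ contains $J_0$ but not $f$, then $I$ lies in the contraction of $\p A_f$, which is $\p$, so $\hh \p \ge n$. Thus any prime of height $< n$ that contains an ideal between $J_0$ and $I$ must contain $f$. It therefore suffices to produce $\Psi \in \EL(P_f^*)$ with $\gb := \Psi(\ga) \in P^*$, $J_0 \subseteq \gb(P) \subseteq I$, and $\hh \gb(P) \ge n$; the opposite inequality is automatic, since a minimal prime of $I$ then contains $\gb(P)$ and has height $n$.

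The transvections I would use are read off from the splittings $P_f^* = (P_1)_f^* \op A_f$ and $(P_f^*)^* = P_f = (P_1)_f \op A_f$. For $\mu \in (P_1)_f^*$, the map $\tau_\mu : (\gm, c) \mapsto (\gm + c\mu, \, c)$ is of the form $1 + \gf_q$ with $\gf$ evaluation at the unimodular vector $(0,1) \in P_f$ and $q = (\mu, 0) \in P_f^*$ (indeed $\gf(q) = 0$), so $\tau_\mu \in \EL(P_f^*)$ and $\tau_\mu(\ga) = (\ga_1 + a\mu, \, a)$. Writing $\mu = \mu_0/f^N$ with $\mu_0 \in P_1^*$, the requirement $\tau_\mu(\ga) \in P^*$ amounts, for $N$ large enough, to $a\mu_0(P_1) \subseteq f^N A$; granting this, $a\mu_0(P_1) \subseteq f^N A \cap I = f^N I$ — this is the one place where the hypothesis that $f$ is a non-zerodivisor modulo $I$ is used — whence $a\mu(P_1) = f^{-N}a\mu_0(P_1) \subseteq I$, and $\gb := \tau_\mu(\ga) \in P^*$ has image ideal $J_0 + a\mu(P_1)$, again between $J_0$ and $I$. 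I would also use the elementary automorphisms of $(P_1)_f^*$, which perturb $\ga_1$, and the transvection $(\gm, c) \mapsto (\gm, \, c + \gm(p))$; the same non-zerodivisor computation constrains these so that the modified map stays valued in $I$. Throughout I would invoke the elementary fact that a transvection of $P_f^*$ carries a linear functional to one with the same image ideal, so that $\gb(P)_f = (J_0)_f = I_f$ for any $\gb = \Psi(\ga)$ arising this way.

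The height of the image ideal is then raised one step at a time. Suppose the current image ideal $J$ — reachable from $J_0$ by the moves above, with $J_0 \subseteq J \subseteq I$ — has height $h < n$. The minimal primes of $J$ realizing the height $h$ do not contain $I$ (otherwise $h \ge \hh I = n$), so by choosing an admissible modification whose added piece avoids all of them we obtain $J'$ with $J \subseteq J' \subseteq I$ and $\hh J' \ge h + 1$. After at most $n$ such steps the image ideal has height $\ge n$, and the composite of the transvections used is the desired $\Psi$.

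The main obstacle is the step just described — showing that the admissible modifications are rich enough to escape a given prime $\p$ with $J \subseteq \p$ and $\hh \p < n$ (necessarily $f \in \p$). The enlargements are rigid: $\Psi(\ga)$ must land back in $P^*$, which forces the correcting functional to be of the form $\mu_0/f^N$, so that the piece added to the image ideal is $a\,\mu(P_1)$, a multiple of the fixed element $a$. The admissible pieces of this shape fill out the submodule of those $\psi \in P_1^*$ with $f^N \psi \in aP_1^*$ for some $N$, and the key point to verify — using that $\ga_f$ is onto $I_f$, that $I \not\subseteq \p$, and that $P_1$ has rank $n - 1$ — is that this submodule, together with the pieces produced after first perturbing $\ga_1$ inside $\EL((P_1)_f^*)$, is not contained in $\p P_1^*$. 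The delicate case is when $a$ lies deep in $\p$: one must then genuinely combine a perturbation of $\ga_1$ with a $\tau_\mu$-move while keeping the composite in $P^*$. Once this is arranged, the rest — the return to $P^*$, the containment in $I$, and the final height count — is routine.
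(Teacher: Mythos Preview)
The paper does not prove this proposition at all: it is quoted verbatim as \cite{BK}, Proposition~2.11, and only used later (in the proof of Lemma~\ref{5.6}). There is therefore no ``paper's own proof'' to compare against; any genuine argument must be reconstructed from the Bhatwadekar--Keshari paper or supplied independently.

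As to your outline itself: the reductions in the first two paragraphs are sound. In particular, your observation that a transvection $1+\gf_p$ of $P_f^*$ (with $\gf$ evaluation at $v\in P_f$) satisfies $(1+\gf_p)(\ga)=\ga\circ(1+p_v)$ --- hence acts on $\ga$ by precomposition with an automorphism of $P_f$ --- is correct, and it does give $\gb(P_f)=\ga(P_f)=I_f$ for every $\gb=\Psi(\ga)$. The use of $f^NA\cap I=f^NI$ to keep the modified map valued in $I$ is also the right place to invoke the non-zerodivisor hypothesis. However, your proposal is not a proof: you explicitly defer the decisive step, namely showing that the family of admissible modifications (those $\Psi\in\EL(P_f^*)$ for which $\Psi(\ga)$ lands back in $P^*$) is rich enough to push the image ideal off any given minimal prime $\p\supseteq J$ of height $<n$. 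Your own final paragraph flags this as ``the key point to verify'' and ``the delicate case,'' and then stops. That is precisely the content of the proposition --- everything else is formal --- so what you have written is an orientation toward a proof, not a proof. If you want to complete it along these lines you will need a genuine prime-avoidance argument adapted to the constrained class of transvections you are allowed, and you should also check carefully that the ``combine a perturbation of $\ga_1$ with a $\tau_\mu$-move'' manoeuvre you allude to can really be arranged to stay in $P^*$ while escaping $\p$.
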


%%%%%%%%%%%%%%%%%%%%%%%%%%%%%%%%%%%%%%%%%%%%%%%%%%%%%%%%%%%%%%%%%%%%%
\section{Euler class group $E(A,F)$}

Let $A$ be a ring of dimension $n\geq 2$ and let $F=Q\op A$ be a
projective $A$-module of rank $n$.  We define the Euler class group of
$A$ with respect to $F$ as follows:

Let $J$ be an ideal of $A$ of height $n$ such that $J/J^2$ is
generated by $n$ elements. Let $\ga$ and $\gb$ be two surjections from
$F/JF $ to $J/J^2$. We say that $\ga$ and $\gb$ are {\it related} if
there exists an automorphism $\sigma$ of $F/JF$ of determinant $1$
such that $\ga \sigma=\gb$. Clearly, this is an equivalence relation
on the set of all surjections from $F/JF$ to $J/J^2$. Let $[\ga]$
denote the equivalence class of $\ga$. We call $[\ga]$ a {\it local
$F$-orientation} of $J$.

Since $\dim A/J =0$, $\SL_{A/J}(F/JF)=EL(F/JF)$ and therefore, by
(\ref{B-Roy4.1}), the canonical map from $\SL_A(F)$ to
$\SL_{A/J}(F/JF)$ is surjective. Hence, if a surjection $\ga : F/JF
\surj J/J^2$ can be lifted to a surjection $\Delta : F \surj J$, then
so can any other surjection $\gb$ equivalent to $\ga$.

A local $F$-orientation $[\ga]$ is called a {\it global
$F$-orientation} of $J$ if the surjection $\ga$ can be lifted to a
surjection from $F$ to $J$.  From now on, we shall identify a
surjection $\ga$ with the equivalence class $[\ga]$ to which $\ga$
belongs.

Let $\CM$ be a maximal ideal of $A$ of height $n$ and let $\CN$ be a
$\CM$-primary ideal such that $\CN/\CN^2$ is generated by $n$
elements. Let $w_{\CN}$ be a local $F$-orientation of $\CN$. Let $G$
be the free abelian group on the set of pairs $(\CN,w_{\CN})$, where
$\CN$ is a $\CM$-primary ideal and $w_{\CN}$ is a local
$F$-orientation of $\CN$.

Let $J=\cap \CN_i$ be the intersection of finitely many
$\CM_i$-primary ideals, where $\CM_i$ are distinct maximal ideals of
$A$ of height $n$. Assume that $J/J^2$ is generated by $n$ elements
and let $w_J$ be a local $F$-orientation of $J$. Then $w_J$ gives
rise, in a natural way, to local $F$-orientations $w_{\CN_i}$ of
$\CN_i$. We associate to the pair $(J,w_J)$, the element $\sum
(\CN_i,w_{\CN_i})$ of $G$.

Let $H$ be the subgroup of $G$ generated by the set of pairs
$(J,w_J)$, where $J$ is an ideal of $A$ of height $n$ and $w_J$ is a
global $F$-orientation of $J$.

We define the {\it Euler class group} of $A$ with respect to $F$,
denoted by $E(A,F)$, as the quotient group $G/H$.

Let $A$ be a ring of dimension $n$.  Let $P$ and $F=Q\op A$ be
projective $A$-modules of rank $n$ and let $\chi : \wedge^n F \iso
\wedge^n P$ be an isomorphism.  We call $\chi$ a {\it $F$-orientation}
of $P$. To the pair $(P,\chi)$, we associate an element $e(P,\chi)$ of
$E(A,F)$ as follows:

Let $\gl : P\surj J_0$ be a generic surjection of $P$ and let "bar" denote
reduction modulo the ideal $J_0$. Then, we obtain an induced
surjection $\ol \gl : \ol P \surj J_0/J_0^2$. Since $\dim A/J_0=0$,
every projective $A/J_0$-module of constant rank is free. Hence, we
choose an isomorphism $\ol \gamma : F/J_0F \iso P/J_0P$ such that
$\wedge^n (\ol \gamma)=\ol \chi$. Let $w_{J_0}$ be the local
$F$-orientation of $J_0$ given by $\ol \gl \circ \ol \gamma : F/J_0F
\surj J_0/J_0^2$. Let $e(P,\chi)$ be the image in $E(A,F)$ of the
element $(J_0,w_{J_0})$ of $G$. We say that $(J_0,w_{J_0})$ is
obtained from the pair $(\gl,\chi)$. We will show that the assignment
sending the pair $(P,\chi)$ to the element $e(P,\chi)$ of $E(A,F)$ is
well defined.

Let $\mu:P \surj J_1$ be another generic surjection of $P$.  By (\ref{3.0}),
there exists a surjection $\ga(T) : P[T] \surj I$, where $I$ is an
ideal of $A[T]$ of height $n$ with $\ga(0)=\gl$, $I(0)=J_0$,
$\ga(1)=\mu$ and $I(1)=J_1$. Using (\ref{3.1}), we get an ideal $K$ of
$A$ of height $n$ and a local $F$-orientation $w_K$ of $K$ such that
$(I(0),w_{I(0)}) + (K,w_K)=0=(I(1),w_{I(1)})+(K,w_K)$ in
$E(A,F)$. Therefore $(J_0,w_{J_0})=(J_1,w_{J_1})$ in $E(A,F)$. Hence
$e(P,\chi)$ is well defined in $E(A,F)$.

We define the {\it Euler class} of $(P,\chi)$ to be $e(P,\chi)$.

For a projective $A$-module $L$ of rank $1$ and $F=L\op A^{n-1}$, the
following result is proved in (\cite{BR}, Proposition 4.1). Since the
same proof works in our case, we omit the proof.

\begin{proposition}\label{4.1}
Let $A$ be a ring of dimension $n\geq 2$ and let $J,J_1,J_2$ be ideals
of $A$ of height $n$ such that $J$ is comaximal with $J_1$ and
$J_2$. Let $F=Q\op A$ be a projective $A$-module of rank $n$. Assume
that $\ga : F \surj J\cap J_1$ and $\gb : F \surj J\cap J_2$ be
surjections with $\ga\ot A/J=\gb\ot A/J$. Suppose there exists an
ideal $J_3$ of height $n$ such that

$(i)$ $J_3$ is comaximal with $J,J_1$ and $J_2$ and

$(ii)$ there exists a surjection
$\gamma : F \surj J_3\cap J_1$ with $\ga\ot A/J_1=\gamma\ot A/J_1$.

 Then there exists
a surjection $\gl : F \surj J_3\cap J_2$ with $\gl \ot A/J_3=\gamma\ot A/J_3$ and
$\gl\ot A/J_2=\gb\ot A/J_2$.
\end{proposition}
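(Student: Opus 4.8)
\medskip

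\noindent\textbf{Proof strategy.} The plan is to produce $\lambda$ from $\alpha,\beta,\gamma$ by a short chain of applications of the addition principle (\ref{addition}) and the subtraction principle (\ref{3.3}), once the $J_1$-component of $\alpha$ has been replaced by one coprime to everything in sight. Precisely, the only step that is not formal bookkeeping with (\ref{addition}) and (\ref{3.3}) is: construct an ideal $K$ of $A$ of height $n$, comaximal with each of $J,J_1,J_2,J_3$, together with a surjection $\tau\colon F\surj J\cap K$ such that $\tau\ot A/J=\alpha\ot A/J$ as local $F$-orientations of $J$ (i.e.\ up to an automorphism of $F/JF$ of determinant $1$). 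One cannot simply take $K=J_1$: the first step below adds $\tau$ to $\gamma$ and so needs $K$ comaximal with $J_1$ as well. (If it happens that $J_1+J_2=A$, no auxiliary ideal is needed --- then (\ref{addition}) applies directly to $\beta$ and $\gamma$, and a single use of (\ref{3.3}), as in the last step below, finishes.)

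\medskip

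\noindent Granting $(K,\tau)$, argue as follows. As $J$ and $K$ are each comaximal with $J_3$ and with $J_1$, the height-$n$ ideal $J\cap K$ is comaximal with $J_3\cap J_1$; so (\ref{addition}) applied to $\tau$ and $\gamma$ gives $\sigma\colon F\surj J\cap K\cap J_3\cap J_1$ with $\sigma\ot A/(J\cap K)=\tau\ot A/(J\cap K)$ and $\sigma\ot A/(J_3\cap J_1)=\gamma\ot A/(J_3\cap J_1)$. View $\sigma$ as a surjection onto $(K\cap J_3)\cap(J\cap J_1)$, where $K\cap J_3$ and $J\cap J_1$ are comaximal ideals of height $n$. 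Modulo $J$ the reductions $\ol\sigma$ and $\ol\alpha$ differ by an automorphism of $F/JF$ of determinant $1$ (since $\ol\sigma=\ol\tau$ and $\tau\ot A/J=\alpha\ot A/J$), and modulo $J_1$ they differ by one of $F/J_1F$ of determinant $1$ (since $\ol\sigma=\ol\gamma$ and $\alpha\ot A/J_1=\gamma\ot A/J_1$); splicing the two by the Chinese Remainder Theorem produces an automorphism $\delta$ of $F/(J\cap J_1)F$ of determinant $1$ with $\ol\alpha\,\delta=\ol\sigma$. Hence (\ref{3.3}), with $P=P'=F$ and $\chi=\mathrm{id}_{\wedge^n F}$, yields $\phi\colon F\surj K\cap J_3$ with $\phi\ot A/(K\cap J_3)=\sigma\ot A/(K\cap J_3)$; in particular $\phi\ot A/K=\tau\ot A/K$ and $\phi\ot A/J_3=\gamma\ot A/J_3$. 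Next, as $K$ and $J_3$ are each comaximal with $J$ and with $J_2$, the ideal $K\cap J_3$ is comaximal with $J\cap J_2$; so (\ref{addition}) applied to $\phi$ and $\beta$ gives $\xi\colon F\surj K\cap J_3\cap J\cap J_2$ with $\xi\ot A/(K\cap J_3)=\phi\ot A/(K\cap J_3)$ and $\xi\ot A/(J\cap J_2)=\beta\ot A/(J\cap J_2)$. Finally view $\xi$ as a surjection onto $(J_3\cap J_2)\cap(J\cap K)$, where $J_3\cap J_2$ and $J\cap K$ are comaximal of height $n$. Modulo $K$ we have $\ol\xi=\ol\phi=\ol\tau$, and modulo $J$ we have $\ol\xi=\ol\beta$, which differs from $\ol\tau$ by an automorphism of determinant $1$ (using $\alpha\ot A/J=\beta\ot A/J$ together with $\tau\ot A/J=\alpha\ot A/J$); so (\ref{3.3}) applies once more and produces $\lambda\colon F\surj J_3\cap J_2$ with $\lambda\ot A/(J_3\cap J_2)=\xi\ot A/(J_3\cap J_2)$. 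Then $\lambda\ot A/J_3=\gamma\ot A/J_3$ and $\lambda\ot A/J_2=\beta\ot A/J_2$, as required.

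\medskip

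\noindent The substance is therefore the construction of $(K,\tau)$, and this is where I expect the real difficulty to lie. Starting from $\alpha\colon F\surj J\cap J_1$, pick $f\in J_1$ with $f\equiv 1$ modulo $J$ (possible since $J+J_1=A$); then $f$ is a unit modulo $J$, and the composite $\wt\alpha\colon F\surj J\cap J_1\inj J$ satisfies $\wt\alpha_f\colon F_f\surj J_f$ since $(J_1)_f=A_f$. Because each of $J_1,J_2,J_3$ has height $n=\dim A$, the set $S$ of maximal ideals of $A$ lying in $V(J_1)\cup V(J_2)\cup V(J_3)$ is finite, and no member of $S$ contains $J$; so a standard general position argument --- alter $\wt\alpha$ by a homomorphism into $J$ so that the new map is surjective onto $J$ locally at every member of $S$ while remaining surjective onto $J$ over $\Spec A_f$, and then apply (\ref{BK2.11}) with $I=J$ --- replaces $\wt\alpha$ by some $\tau\in F^*$ with $\tau\ot A/J=\alpha\ot A/J$ and $\tau(F)$ an ideal of height $n$ contained in $J$. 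Since $f$ is a unit modulo $J$, $\tau(F)$ and $J$ have the same localization at each prime containing $J$, so $\tau(F)=J\cap K$ with $K$ comaximal with $J$; and the choices above make $K$ comaximal with $J_1,J_2,J_3$ too. This ``moving'' of the component $J_1$ is the only ingredient beyond the formal calculus of the addition and subtraction principles; it is exactly the step performed in the proof of \cite[Proposition~4.1]{BR}, and, since all that is used about $F$ is the existence of a unimodular element, it applies verbatim to $F=Q\op A$.
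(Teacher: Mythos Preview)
Your overall architecture is exactly that of \cite[Proposition~4.1]{BR}, to which the paper defers: manufacture an auxiliary pair $(K,\tau)$ with $\tau\colon F\surj J\cap K$, $\tau\ot A/J=\alpha\ot A/J$, and $K$ comaximal with $J,J_1,J_2,J_3$; then run the four-step calculus with (\ref{addition}) and (\ref{3.3}). Your steps producing $\sigma,\phi,\xi,\lambda$ are correct and the verifications of the mod-$J_3$ and mod-$J_2$ identities at the end are fine.

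The gap is in your construction of $(K,\tau)$. Proposition~\ref{BK2.11} does not do what you need. First, the element $\Psi\in EL(F_f^*)$ it produces acts on $F_f^*$ as a module automorphism; writing out a transvection $(1+\psi_q)(\alpha')=\alpha'+\alpha'(\psi)\,q$ one sees that the correction term lies only in $J_f\cdot F_f^*$, not in $J_f^2\cdot F_f^*$, so the induced map $F/JF\to J/J^2$ is altered in an uncontrolled way and there is no reason for $\ol\tau$ to be $SL$-equivalent to $\ol\alpha$. Second, (\ref{BK2.11}) gives no control over which maximal ideals contain $\tau(F)$: your preliminary ``general position'' alteration makes $\alpha'$ surjective at the finitely many maximal ideals over $J_1,J_2,J_3$, but those maximal ideals contain $f$ (you chose $f\in J_1$), so $\Psi\in EL(F_f^*)$ cannot be reduced there and nothing prevents $\tau(F)\subset\mathfrak m$ for such $\mathfrak m$. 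Thus neither the orientation condition $\tau\ot A/J=\alpha\ot A/J$ nor the comaximality of $K$ with $J_1,J_2,J_3$ is established.

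The fix is the one you yourself point to at the end: invoke \cite[Corollary~2.14]{BR} (used verbatim in this paper in the proof of Proposition~\ref{6.4}). That result takes the surjection $\alpha\ot A/J\colon F/JF\surj J/J^2$ and lifts it to $\tau\colon F\surj J\cap K$ with $K$ of height $\ge n$ and comaximal with any prescribed height-$\ge 1$ ideal, in particular with $J_1J_2J_3$. By construction $\tau\ot A/J=\alpha\ot A/J$ on the nose. With this substitution your proof is complete and coincides with the argument the paper omits.
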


Using (\ref{4.1}, \ref{addition} and \ref{3.3}) and following the
proof of (\cite{BR}, Theorem 4.2), the next result follows.

\begin{theorem}\label{4.2}
Let $A$ be a ring of dimension $n\geq 2$ and let $F=Q\op A$ be a
projective $A$-module of rank $n$.  Let $J$ be an ideal of $A$ of
height $n$ such that $J/J^2$ is generated by $n$ elements. Let $w_J :
F/JF \surj J/J^2$ be a local $F$-orientation of $J$. Suppose that the
image of $(J,w_J)$ is zero in $E(A,F)$. Then $w_J$ is a global
$F$-orientation of $J$.
\end{theorem}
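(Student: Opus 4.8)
The plan is to follow the proof of (\cite{BR}, Theorem 4.2), whose three engines---the addition principle \ref{addition}, the subtraction principle \ref{3.3}, and the ``quadrangle'' Proposition \ref{4.1}---are all available in our setting. Write $E(A,F)=G/H$. Since the class of $(J,w_J)$ is zero, $(J,w_J)$ lies in $H$, so in the free abelian group $G$ there is an identity
\[
(J,w_J)+\textstyle\sum_{j}(L_j,w_{L_j}) \;=\; \textstyle\sum_{i}(K_i,w_{K_i}),
\]
where each $L_j$ and each $K_i$ is an ideal of height $n$ carrying a \emph{global} $F$-orientation ($w_{L_j}$, resp.\ $w_{K_i}$), every pair being decomposed into its $\CM$-primary components as in the definition of $G$. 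The goal of the first---and main---part of the argument is to massage this into a two-term relation
\[
(J,w_J)+(J_2,w_2)\;=\;(J_1,w_1)\quad\text{in }G,
\]
where $J_1,J_2$ have height $n$ and carry global $F$-orientations, $J$ is comaximal with $J_2$, $J_1=J\cap J_2$, and $w_1$ restricts to $w_J$ on $J$ and to $w_2$ on $J_2$.

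This reduction is carried out exactly as in \cite{BR}. Using Lemma \ref{6.7} one replaces, one ideal at a time, each $L_j$ (resp.\ $K_i$) by an ideal comaximal with $J$ and with all ideals produced so far, still carrying a global $F$-orientation matching the original one after an application of the subtraction principle \ref{3.3}; Proposition \ref{4.1} is what keeps the local orientations coherent as a term is transported across the relation, and it is also used to dispose of global ideals that a priori occur with multiplicity. Once all ideals occurring are pairwise comaximal and comaximal with $J$, repeated use of the addition principle \ref{addition} amalgamates the $K_i$'s into a single global pair $(J_1,w_1)$ and the $L_j$'s into a single global pair $(J_2,w_2)$. Comparing $\CM$-primary components on the two sides of the resulting identity---legitimate since $G$ is free on such components and the maximal ideals of $J$ and of $J_2$ are now disjoint---forces $J_1=J\cap J_2$ together with the stated restrictions of $w_1$. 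If at the end of this process $J_2=A$ (no negative terms), then $w_1=w_J$ is already global and we are done; so assume $J_2$ has height $n$.

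Now realize the global orientations by honest surjections: pick $\Phi:F\surj J\cap J_2$ lifting $w_1$ and $\Psi:F\surj J_2$ lifting $w_2$. Apply the subtraction principle \ref{3.3} with $P=P'=F=Q\op A$, $\chi=\mathrm{id}_{\wedge^nF}$, with $J$ (of height $\geq n$) and $J':=J_2$ (of height $n$), and with $\ga:=\Phi$, $\gb:=\Psi$. Reducing modulo $J_2$, both $\ol\Phi$ and $\ol\Psi$ are surjections $F/J_2F\surj J_2/J_2^2$ representing the \emph{same} local $F$-orientation $w_2$ of $J_2$; hence they are related, i.e.\ $\ol\Phi\,\sigma=\ol\Psi$ for some $\sigma\in\SL_{A/J_2}(F/J_2F)$, and $\delta:=\sigma^{-1}$ satisfies $\ol\Psi\,\delta=\ol\Phi$ and $\wedge^n(\delta)=1=\ol\chi$. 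The hypotheses of \ref{3.3} are thus met, and it produces a surjection $\theta:F\surj J$ with $\theta\ot A/J=\Phi\ot A/J$. Since $\Phi$ lifts $w_1$ and $w_1$ restricts to $w_J$ on $J$ (and $J\cap J_2+J^2=J$ because $J,J_2$ are comaximal), the map $\theta\ot A/J$ represents $w_J$; that is, $w_J$ is a global $F$-orientation of $J$.

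The serious work is concentrated in the first step: converting the raw membership $(J,w_J)\in H$ into the clean two-term comaximal relation above. The difficulty there is purely bookkeeping---arranging general position among the maximal ideals involved, eliminating multiplicities, and transporting the local orientations correctly as ideals are moved---and this is precisely what Proposition \ref{4.1} together with Lemma \ref{6.7} and the addition principle \ref{addition} are designed to handle, following \cite{BR}. Once the two-term relation is in hand, the extraction of a surjection $F\surj J$ lifting $w_J$ is the single direct application of the subtraction principle described above.
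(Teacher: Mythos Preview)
Your proposal is correct and follows essentially the same approach as the paper, which simply says to use Proposition~\ref{4.1}, the addition principle~\ref{addition}, and the subtraction principle~\ref{3.3} and follow the proof of \cite[Theorem~4.2]{BR}; you have faithfully unpacked what that entails, including the reduction to a two-term comaximal relation and the final application of~\ref{3.3}. The only minor difference is that you invoke Lemma~\ref{6.7} for the general-position step, which the paper does not list among the ingredients for this theorem; this is harmless (the special case $P=P'=F$ is all you need, and it is available), though in \cite{BR} the moving is done directly with~\ref{4.1} and the addition/subtraction principles rather than via a separate common-image lemma.
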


Using (\ref{4.2} and \ref{3.3}) and following the proof of (\cite{BR},
Corollary 4.3), the next result follows.

\begin{corollary}\label{4.3}
Let $A$ be a ring of dimension $n\geq 2$. Let $P$ and $F=Q\op A$ be
projective $A$-modules of rank $n$ and let $\chi : \wedge^n F\iso
\wedge^n P$ be a $F$-orientation of $P$.  Let $J$ be an ideal of $A$
of height $n$ such that $J/J^2$ is generated by $n$ elements and let
$w_J$ be a local $F$-orientation of $J$. Suppose $e(P,\chi)=(J,w_J)$
in $E(A,F)$. Then there exists a surjection $\ga : P\surj J$ such that
$(J,w_J)$ is obtained from $(\ga,\chi)$.
\end{corollary}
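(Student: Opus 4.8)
The plan is to imitate the proof of (\cite{BR}, Corollary 4.3): realize $w_J$ by a generic surjection of $P$ onto $J$ intersected with an auxiliary ideal, show that auxiliary ideal carries the zero class, and then use Theorem \ref{4.2} and the subtraction principle (Theorem \ref{3.3}) to transfer the resulting surjection of $F$ back to $P$. First I would choose, since $\dim A/J=0$, an isomorphism $\ol\gamma:F/JF\iso P/JP$ with $\wedge^n(\ol\gamma)=\ol\chi$, and set $\ol\ga_0=w_J\circ\ol\gamma^{-1}:P/JP\surj J/J^2$. As $P$ is projective, $\ol\ga_0$ lifts to a homomorphism $\ga_1:P\ra J$ with $\ga_1(P)+J^2=J$.

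Next, for a prime $\p\supseteq J$ one has $\ga_1(P)_\p+J_\p^2=J_\p$ with $J_\p^2\subseteq \p A_\p\cdot J_\p$, so Nakayama gives $\ga_1(P)_\p=J_\p$; hence $\ga_1(P)=J\cap\mathfrak c$ with $\mathfrak c$ automatically comaximal with $J$, and by the standard general position argument (perturbing $\ga_1$ by an element of $\Hom(P,J^2)$; here one uses $\dim A=n$ and that $P$ has rank $n$) I may assume $\mathfrak c$ has height $\geq n$. If $\mathfrak c=A$ we are done, so assume $\mathfrak c$ has height $n$. Then $\mathfrak c/\mathfrak c^2$ is generated by $n$ elements (at each prime $\p\supseteq\mathfrak c$ one has $\mathfrak c_\p=(J\cap\mathfrak c)_\p=\ga_1(P)_\p$, a quotient of $P_\p$), so $\ga_1:P\surj J\cap\mathfrak c$ is a generic surjection; and since any two isomorphisms $F/JF\iso P/JP$ with $n$th exterior power $\ol\chi$ differ by an element of $\SL$, which does not change a local $F$-orientation, $\ga_1$ induces $w_J$ on $J$.

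Computing $e(P,\chi)$ from $\ga_1$: it is the class of $(J\cap\mathfrak c,\,w_{J\cap\mathfrak c})$, where $w_{J\cap\mathfrak c}$ is the local $F$-orientation induced by $\ga_1$; by comaximality this decomposes as $(J,w_J)+(\mathfrak c,w_{\mathfrak c})$ (the $J$-component being $w_J$, by the previous paragraph), so $(J,w_J)+(\mathfrak c,w_{\mathfrak c})=(J,w_J)$ in $E(A,F)$ and hence $(\mathfrak c,w_{\mathfrak c})=0$. By Theorem \ref{4.2}, $w_{\mathfrak c}$ is then a global $F$-orientation, so there is a surjection $\gb:F\surj\mathfrak c$ inducing $w_{\mathfrak c}$. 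Now I would apply the subtraction principle (Theorem \ref{3.3}) with the ideals $J$ and $\mathfrak c$, the modules $P$ and $F=Q\op A$, the isomorphism $\chi^{-1}:\wedge^n P\iso\wedge^n F$, and the surjections $\ga_1:P\surj J\cap\mathfrak c$ and $\gb:F\surj\mathfrak c$: since $\ga_1$ and $\gb$ induce the same local $F$-orientation $w_{\mathfrak c}$ on $\mathfrak c$, their reductions modulo $\mathfrak c$ are related by an isomorphism $\delta:\ol P\iso\ol F$ with $\ol\gb\,\delta=\ol{\ga_1}$ and $\wedge^n(\delta)=\ol{\chi^{-1}}$ (take $\delta$ to be the reduction mod $\mathfrak c$ of the comparison isomorphism behind $w_{\mathfrak c}$, adjusted by the relevant element of $\SL$). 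Theorem \ref{3.3} then produces a surjection $\theta:P\surj J$ with $\theta\otimes A/J=\ga_1\otimes A/J=\ol\ga_0$, so $\theta$ induces $w_J$ on $J$; that is, $(J,w_J)$ is obtained from $(\theta,\chi)$.

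The hard part will be the general position step in the second paragraph — forcing the error ideal $\mathfrak c$ to have pure height $n$ while keeping $\ga_1$ an honest lift of $\ol\ga_0$ — but this involves only $P$ and $A$ and is exactly the argument used to produce generic surjections in \cite{BR}, so it carries over unchanged; everything else is formal, and since $F=Q\op A$ has a free direct summand the two ingredients Theorems \ref{4.2} and \ref{3.3} are available just as in the line-bundle case.
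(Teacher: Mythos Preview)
Your proposal is correct and follows essentially the same route the paper indicates: the paper does not give its own argument but simply says the result follows from Theorems~\ref{4.2} and~\ref{3.3} by imitating the proof of \cite[Corollary~4.3]{BR}, and that is exactly what you do---lift $w_J$ to a generic surjection $\ga_1:P\surj J\cap\mathfrak c$, deduce $(\mathfrak c,w_{\mathfrak c})=0$ from $e(P,\chi)=(J,w_J)$, globalize $w_{\mathfrak c}$ via Theorem~\ref{4.2}, and subtract via Theorem~\ref{3.3}. The general-position step you flag as the only nontrivial point is indeed covered by \cite[Corollary~2.14]{BR}.
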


Using (\ref{4.2}, \ref{4.3}) and following the proof of (\cite{BR},
Theorem 4.4), the next result follows.

\begin{corollary}\label{4.4}
Let $A$ be a ring of dimension $n\geq 2$. Let $P$ and $F=Q\op A$ be
projective $A$-modules of rank $n$ and let $\chi:\wedge^n F\iso
\wedge^{n} P$ be a $F$-orientation of $P$.  Then $e(P,\chi)=0$ in
$E(A,F)$ if and only if $P$ has a unimodular element.
\end{corollary}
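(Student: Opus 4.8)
The plan is to establish the two implications separately, following the proof of (\cite{BR}, Theorem 4.4). Recall that the local $F$-orientation attached to a generic surjection $\lambda:P\surj J_0$ of $P$ is $w_{J_0}=\ol\lambda\circ\ol\gamma$, where $\ol\gamma:F/J_0F\iso P/J_0P$ is a chosen isomorphism with $\wedge^n\ol\gamma=\ol\chi$; since $\dim A/J_0=0$, the module $P/J_0P$ is free of rank $n$ and $J_0/J_0^2$ is generated by $n$ elements, so $(J_0,w_{J_0})$ is a legitimate generator of $G$ representing $e(P,\chi)$.

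Suppose first that $e(P,\chi)=0$ in $E(A,F)$. Fix a generic surjection $\lambda:P\surj J_0$, so $(J_0,w_{J_0})=e(P,\chi)=0$. By Theorem~\ref{4.2}, $w_{J_0}$ is a \emph{global} $F$-orientation, i.e.\ there is a surjection $\Delta:F\surj J_0$ whose reduction modulo $J_0$ lies in the class $w_{J_0}$; since $\SL_{A/J_0}(F/J_0F)=\EL(F/J_0F)$, I may (by Proposition~\ref{B-Roy4.1}) compose $\Delta$ with an automorphism of $F$ so as to arrange $\ol\Delta=\ol\lambda\circ\ol\gamma$ exactly. I would then invoke the subtraction principle (Theorem~\ref{3.3}) with $P'=F$, orientation $\chi^{-1}:\wedge^nP\iso\wedge^nF$, $J'=J_0$ and $J=A$: the surjections $\lambda:P\surj A\cap J_0$ and $\Delta:F\surj J_0$, together with the isomorphism $\delta:=\ol\gamma^{-1}:P/J_0P\iso F/J_0F$ (for which $\ol\Delta\circ\delta=\ol\lambda$ and $\wedge^n\delta=\ol{\chi^{-1}}$ by construction), produce a surjection $\theta:P\surj A$, that is, a unimodular element of $P$.

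Conversely, suppose $P$ has a unimodular element, so $P\cong Q_1\op A$, and fix a generic surjection $\lambda:P\surj J_0$; then $e(P,\chi)=(J_0,w_{J_0})$, and it suffices to show $w_{J_0}$ is a global $F$-orientation. The splitting of $P$ enters as follows: since $A/J_0$ is a finite product of Artinian local rings and $n\geq 2$, $\SL_{A/J_0}(F/J_0F)$ acts transitively on the unimodular elements of $F/J_0F$, so $\ol\gamma$ (subject to $\wedge^n\ol\gamma=\ol\chi$) can be chosen to send the reduction of the free generator of $F=Q\op A$ to that of $P=Q_1\op A$. Then $w_{J_0}$ sends the free generator of $F/J_0F$ to a fixed $\ol s$ with $s\in J_0$; choosing moreover the generic surjection of the special form $\lambda(x,a)=\beta(x)+as$, with $\beta:Q_1\to A$ generic and $s$ a nonzerodivisor modulo the height $n-1$ ideal $\beta(Q_1)$ (a standard general-position choice), one gets $J_0=\beta(Q_1)+(s)$. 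Following (\cite{BR}, Theorem 4.4), I would lift $w_{J_0}|_{Q/J_0Q}$ to a map $Q\to J_0$, combine it with $s$ to obtain a map $F\to J_0$ surjective modulo $J_0^2$, and then correct it to an honest surjection $F\surj J_0$ in the class $w_{J_0}$ using Corollary~\ref{4.3} and the addition principle (Theorem~\ref{addition}), after introducing an auxiliary ideal comaximal with $J_0$. Hence $w_{J_0}$ is global, $(J_0,w_{J_0})=0$, and $e(P,\chi)=0$.

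The main obstacle is not in the first implication: there the serious analytic input is already absorbed into Theorem~\ref{4.2} and the subtraction principle, and the only thing requiring care is the orientation bookkeeping — tracking the direction of $\chi$ and lifting the relevant $\SL$-element of $F/J_0F$ to an automorphism of $F$. The genuinely delicate point is the converse: passing from ``$J_0/J_0^2$ is generated by $n$ elements in the class $w_{J_0}$'' to ``there is an honest surjection $F\surj J_0$ inducing $w_{J_0}$''. This is precisely where the hypothesis that $P$ has a unimodular element is used — via the matching of the free generators of $P/J_0P$ and $F/J_0F$ — and where the addition principle is indispensable; without a unimodular element this last lifting step fails, which is the sense in which $e(P,\chi)$ is a genuine obstruction.
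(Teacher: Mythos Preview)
Your forward implication is correct and is exactly the paper's argument: Theorem~\ref{4.2} upgrades $w_{J_0}$ to a global $F$-orientation, and then the subtraction principle (Theorem~\ref{3.3}) with $J=A$, $J'=J_0$, $P'=F$ and $\delta=\ol\gamma^{-1}$ produces a surjection $P\surj A$.

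The converse, however, has a genuine gap. Your plan --- choose $\lambda$ in a special form, lift $w_{J_0}|_{Q/J_0Q}$ to $Q\to J_0$, adjoin $s$, and then ``correct'' via Corollary~\ref{4.3} and the addition principle --- does not close. After lifting you only obtain $\psi:F\surj J_0\cap J_1$ for some auxiliary height-$n$ ideal $J_1$, which gives $(J_0,w_{J_0})+(J_1,w_{J_1})=0$; neither Corollary~\ref{4.3} (which produces surjections \emph{from $P$}, not from $F$) nor the addition principle tells you why $(J_1,w_{J_1})$ should vanish. Your diagnosis that ``the addition principle is indispensable'' here is incorrect.

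The point you are missing is that the hypothesis $P\cong P_1\oplus A$ allows you to swap the roles of $P$ and $F$ in the \emph{subtraction} principle. Concretely: lift $w_{J_0}$ to a surjection $\psi:F\surj J_0\cap J_1$ with $J_1$ of height $\geq n$ comaximal with $J_0$ and $\psi\otimes A/J_0=w_{J_0}$. Now apply Theorem~\ref{3.3} with $F$ in the role of ``$P$'', with $P=P_1\oplus A$ in the role of ``$P'$'' (this is exactly where the unimodular element is used), with $J'=J_0$, $J=J_1$, $\alpha=\psi$, $\beta=\lambda$, $\delta=\ol\gamma$ and orientation $\chi:\wedge^nF\iso\wedge^nP$. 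The compatibility $\ol\beta\,\delta=\ol\lambda\circ\ol\gamma=w_{J_0}=\ol\alpha$ holds by construction, so Theorem~\ref{3.3} yields $\theta:F\surj J_1$ with $\theta\otimes A/J_1=\psi\otimes A/J_1=w_{J_1}$. Hence $(J_1,w_{J_1})=0$ in $E(A,F)$ and therefore $(J_0,w_{J_0})=0$. Equivalently, one may phrase this as applying Corollary~\ref{4.3} in the group $E(A,P)$ (which is now available since $P=P_1\oplus A$) to the pair $(F,\chi^{-1})$; this is what the paper's reference to ``(\ref{4.2}, \ref{4.3})'' is pointing at. No special choice of $\lambda$ or of $\ol\gamma$ is needed.
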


Let $A$ be a ring of dimension $n\geq 2$ and let $F=Q\op A$ be a
projective $A$-module of rank $n$. Let ``bar" denote reduction modulo
the nil radical $N$ of $A$ and let $\ol A=A/N$ and $\ol F=F/NF$. Let
$J$ be an ideal of $A$ of height $n$ with primary decomposition
$J=\cap \CN_i$. Then $\ol J=(J+N)/N$ is an ideal of $\ol A$ of height
$n$ with primary decomposition $\ol J=\cap \ol\CN_i$. Moreover, any
surjection $w_J: F/JF \surj J/J^2$ induces a surjection $\ol w_{\ol J}
: \ol F/\ol {JF} \surj \ol J/\ol {J^2}=(J+N)/(J^2+N)$. Hence, the
assignment sending $(J,w_J)$ to $(\ol J,\ol w_{\ol J})$ gives rise to
a group homomorphism $\Phi : E(A,F) \surj E(\ol A,\ol F)$.

As a consequence of (\ref{4.2}), we get the following result, the
proof of which is same as of (\cite{BR}, Corollary 4.6).

\begin{corollary}\label{4.6}
The homomorphism $\Phi : E(A,F) \ra E(\ol A,\ol F)$ is an isomorphism.
\end{corollary}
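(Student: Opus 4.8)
The plan is to prove that $\Phi$ is both injective and surjective. Two facts will be used throughout: since $A$ is Noetherian its nilradical $N$ is nilpotent, so $N$ lies in the Jacobson radical of $A$ and of every localization $A_\p$ (hence Nakayama's lemma applies to $N$); and passing to $\ol A=A/N$ preserves heights of ideals, so $\ol J$ has the same height as $J$ and $\dim\ol A=\dim A=n$.

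\emph{Injectivity.} Let $(J,w_J)$ be a generator of $G$ whose class maps to $0$, i.e. $(\ol J,\ol w_{\ol J})=0$ in $E(\ol A,\ol F)$. Since $\ol J$ has height $n$, $\dim\ol A=n$, and $\ol J/\ol J^2$ (a quotient of $J/J^2$) is generated by $n$ elements, Theorem \ref{4.2} over $\ol A$ shows that $\ol w_{\ol J}$ is a global $\ol F$-orientation; fix a surjection $\ol\beta:\ol F\surj\ol J$ inducing it. As $F$ is projective and $J$ maps onto $\ol J=J/(J\cap N)$, the composite $F\to\ol F\by{\ol\beta}\ol J$ lifts through $J\surj\ol J$ to a homomorphism $\beta_0:F\to J$; using Proposition \ref{B-Roy4.1} we may arrange in addition that $\beta_0$ induces, modulo $J^2$, a surjection in the class $w_J$, so that $\beta_0(F)+J^2=J$. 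Since $\beta_0(F)$ maps onto $\ol J=J/(J\cap N)$, we also have $\beta_0(F)+(J\cap N)=J$. Put $M=J/\beta_0(F)$ and take $\p\in\mathrm{Supp}(M)$: if $J\not\subseteq\p$ then $J_\p=A_\p$ and the relation $\beta_0(F)+(J\cap N)=J$ localizes to $A_\p=\beta_0(F)_\p+N_\p$ with $N_\p$ nilpotent, forcing $\beta_0(F)_\p=A_\p$ by Nakayama; if $J\subseteq\p$ then $\beta_0(F)+J^2=J$ localizes to $M_\p=J_\p M_\p$ with $J_\p$ in the radical, forcing $M_\p=0$. Either way $M_\p=0$, contradicting $\p\in\mathrm{Supp}(M)$; hence $M=0$, so $\beta_0:F\surj J$ is a surjection lifting $w_J$, and $(J,w_J)=0$ in $E(A,F)$.

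\emph{Surjectivity.} It suffices to place each generator $(\ol\CN,\ol w)$ of $G$ over $\ol A$ — $\ol\CN$ being $\ol\CM$-primary of height $n$ with $\ol\CN/\ol\CN^2$ generated by $n$ elements — in the image of $\Phi$. Choosing generators of $\ol\CN/\ol\CN^2$ that represent $\ol w$, lifting them to $A$, and correcting them by elements of $N$ and by lifts of elements of $\ol\CN^2$ (prime avoidance supplying the height condition), one obtains an ideal $\CN'$ of $A$ of height $n$ with $\CN'/\CN'^2$ generated by $n$ elements and carrying a local $F$-orientation $w_{\CN'}$ whose reduction modulo $N$ is $(\ol\CN\cap\ol\CN'',\ \ol v)$ for some ideal $\ol\CN''$ of height $n$ comaximal with $\ol\CN$, $\ol v$ being the induced local orientation; by a suitable choice of generators one can moreover take $\ol\CN\cap\ol\CN''$ to be a surjective image of $\ol F$ compatibly with $\ol v$, so that $(\ol\CN\cap\ol\CN'',\ol v)=0$ in $E(\ol A,\ol F)$ and hence $\Phi$ of the class of $(\CN',w_{\CN'})$ equals $(\ol\CN,\ol w)+(\ol\CN'',\ol w'')$, where $\ol w''$ is the orientation of $\ol\CN''$ induced by $\ol v$. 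A further application of this construction to $(\ol\CN'',\ol w'')$, combined with the addition and subtraction principles (Theorems \ref{addition} and \ref{3.3}) and Theorem \ref{4.2} over $\ol A$, should let one absorb the auxiliary summand and produce a class in $E(A,F)$ mapping to $(\ol\CN,\ol w)$; since $\mathrm{Im}(\Phi)$ is a subgroup of $E(\ol A,\ol F)$, surjectivity follows. I expect this last bookkeeping to be the main obstacle: one must arrange the lifted ideal to be generated by the correct number of elements modulo its square (so that it is a legitimate generator of $G$ over $A$) and absorb the spurious comaximal component of its reduction — together with the attendant unit ambiguity in orientations — using the addition and subtraction principles, whereas injectivity is essentially immediate from Theorem \ref{4.2} over $\ol A$ and the nilpotent-Nakayama argument above.
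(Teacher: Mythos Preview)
Your injectivity argument treats only elements of the shape $(J,w_J)$; showing that such a pair vanishes when its image does is not the same as proving $\ker\Phi=0$, because a general element of $E(A,F)=G/H$ is a priori a $\BZ$-linear combination of primary generators, possibly with signs. The missing step --- that every class in $E(A,F)$ can be represented by a single pair $(J,w_J)$ --- is where the real content lies, and it needs a moving-lemma argument (in the spirit of Lemma~\ref{5.5}, or its analogue in \cite{BR}) to absorb negatives and merge comaximal pieces. Even granting that reduction, your appeal to Proposition~\ref{B-Roy4.1} to force $\beta_0\bmod J^2$ into the class $[w_J]$ is not right: that proposition lifts $\EL$-automorphisms, but what you actually need is that two local orientations agreeing modulo $N$ already lie in the same $\SL$-orbit over $A/J$, and the obstruction is a unit of $A/J$ congruent to $1$ modulo the nilpotent ideal $(N+J)/J$ --- not automatically $1$. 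The fix is different: two lifts of $\ol\beta$ differ by a map $F\to J\cap N$, and since $w_J-(\beta_0\bmod J^2)$ lands precisely in the image of $J\cap N$ inside $J/J^2$, you can adjust the lift $\beta_0$ itself to hit $w_J$ on the nose; your Nakayama argument for surjectivity of $\beta_0$ is then correct.

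On surjectivity you only need to hit each generator $(\ol\CN,\ol w)$, so the absorption scheme you outline is more than required, and your own caveat that this is ``the main obstacle'' inverts the difficulty. One clean route: lift $\ol w$ to a surjection $\ol\alpha:\ol F\surj\ol\CN\cap\ol K$ for some comaximal $\ol K$ (standard height-raising, as in \cite{BR}, Corollary~2.14), take any lift $\alpha:F\to A$ and set $J=\alpha(F)$; then $J$ has height $n$, $\ol J=\ol\CN\cap\ol K$, and the $\CM$-primary component of $J$ furnishes a generator of $G$ over $A$ mapping to $(\ol\CN,\ol w)$. Once the single-pair representation is available, both halves become routine lifting through a nilpotent ideal --- that representation is the crux, not the bookkeeping you flag.
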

%%%%%%%%%%%%%%%%%%%%%%%%%%%%%%%%%%%%%%%%%%%%%%%%%%%%%%%%%%%%%%%%%%%%%%%%%%%%%%%5

\section{Some results on $E(A,F)$}

Let $A$ be a ring of dimension $n\geq 2$ and let $F=Q\op A$ be a
projective $A$-module of rank $n$. Let $J$ be an ideal of $A$ of
height $n$ and let $w_J: F/JF \surj J/J^2$ be a surjection.  Let $\ol
b\in A/J$ be a unit. Then composing $w_J$ with an automorphism of
$F/JF$ of determinant $\ol b$, we get another local $F$-orientation
of $J$, which we denote by $\ol b w_J$. Further, if $w_J$ and $\wt
w_J$ are two local $F$-orientations of $J$, then it is easy to see
that $\wt w_J=\ol b w_J$ for some unit $\ol b\in A/J$.

We recall the following two results from (\cite{BR}, Lemma 2.7 and
2.8) respectively.

\begin{lemma}\label{2.7}
Let $A$ be a ring and let $P$ be a projective $A$-module of rank
$n$. Assume $0\ra P_1 \ra A\op P \by {(b,-\ga)} A \ra 0$ is an exact
sequence. Let $(a_0,p_0)\in A\op P$ be such that
$a_0b-\ga(p_0)=1$. Let $q_i=(a_i,p_i)\in P_1$ for $i=1,\ldots,
n$. Then

$(i)$ the map $\delta: \wedge^n P_1\ra \wedge^n P$ given by $\delta
(q_1\wedge \ldots \wedge q_n)=a_0(p_1\wedge \ldots \wedge p_n) + \sum
_1^n (-1)^i a_i(p_0\wedge \ldots p_{i-1}\wedge p_{i+1} \ldots \wedge
p_n)$ is an isomorphism.

$(ii)$ $\delta(bq_1\wedge \ldots \wedge q_n)=p_1\wedge \ldots \wedge p_n$.
\end{lemma}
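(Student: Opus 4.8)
The plan is to prove a linear-algebra identity about the determinant of the map $\delta$ constructed from the exact sequence $0\ra P_1 \ra A\op P \by{(b,-\ga)} A \ra 0$, together with a splitting datum $(a_0,p_0)$ with $a_0 b-\ga(p_0)=1$. Since both assertions are statements about the top exterior power of projective modules, it suffices to verify them locally, i.e.\ after localizing at an arbitrary prime (or maximal) ideal. First I would localize so that $P$ becomes free of rank $n$, say $P\cong A^n$ with basis $e_1,\ldots,e_n$, and write $\ga$ in coordinates as $\ga(e_j)=c_j$. The splitting $(a_0,p_0)$ with $p_0=\sum d_j e_j$ gives $a_0 b-\sum c_j d_j=1$; in particular the $n+1$ elements $b,c_1,\ldots,c_n$ generate the unit ideal, and $P_1\subset A\op A^n$ is the kernel of $(b,-c_1,\ldots,-c_n):A^{n+1}\ra A$, hence is free of rank $n$ (it is a stably free module, and since we are free to shrink further we may assume it is free).

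Next I would produce an explicit basis of $P_1$ and compute $\delta$ on it. The key is the \emph{Koszul-type relations} in $P_1$: for each pair of indices the element with $c_j$ in the $i$th $P$-slot and $-c_i$ in the $j$th slot lies in $P_1$, and together with the element $q_0:=(a_0,p_0)\cdot(\text{suitably twisted})$ these span. Concretely, I would check that the elements
\[
r_{ij}=(0,\,c_i e_j-c_j e_i)\quad\text{and}\quad s=(b, \ga\text{-complement})
\]
can be organized (after row operations over $A$, which do not affect top exterior powers up to a unit, but here we want an exact computation) into a basis $q_1,\ldots,q_n$ for which $\delta(q_1\wedge\cdots\wedge q_n)$ is directly computed from the stated formula. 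The cleanest route is the standard one: choose the basis $q_k=(c_k,\,-b e_k)$ for $k=1,\ldots,n$ wherever $b$ is a unit, and the basis $q_k=(0, c_k e_n - c_n e_k)$ for $k<n$ together with $q_n=(a_0,p_0)$ wherever $c_n$ is a unit; since $b,c_1,\ldots,c_n$ generate the unit ideal these charts cover $\Spec A$. In each chart one plugs $q_i=(a_i,p_i)$ into the formula
\[
\delta(q_1\wedge\cdots\wedge q_n)=a_0(p_1\wedge\cdots\wedge p_n)+\sum_{i=1}^n(-1)^i a_i\,(p_0\wedge\cdots\wedge p_{i-1}\wedge p_{i+1}\wedge\cdots\wedge p_n)
\]
and checks by a short determinant expansion (using $a_0 b-\sum c_j d_j=1$) that the output is a unit multiple of $e_1\wedge\cdots\wedge e_n$. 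That proves $(i)$, that $\delta$ is an isomorphism, because a map between rank-$n$ projectives that is an isomorphism on top exterior powers locally everywhere is a global isomorphism on top exterior powers, which is exactly the claim.

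For $(ii)$, the statement $\delta(b q_1\wedge\cdots\wedge q_n)=p_1\wedge\cdots\wedge p_n$ is again local, and in fact it follows from $(i)$ by a direct substitution: replacing $q_1$ by $b q_1=(b a_1,\,b p_1)$ in the formula and using $b a_0 -\ga(?)$... more precisely one uses the relation $q_i=(a_i,p_i)\in P_1$, i.e.\ $a_i b=\ga(p_i)=\sum_j c_j p_{ij}$, to rewrite $b\,\delta(q_1\wedge\cdots\wedge q_n)$ and collapse the alternating sum. I would verify this by expanding $\delta(bq_1\wedge q_2\wedge\cdots\wedge q_n)$ directly: the first term becomes $a_0 b\,(p_1\wedge\cdots\wedge p_n)$, and in the remaining terms one substitutes $b a_i = \ga(p_i)$ and expands $\ga(p_i)=\sum_j c_j p_{ij}$, after which the multilinearity and antisymmetry of the wedge, combined with $a_0 b-\ga(p_0)=1$, force all cross terms to cancel and leave exactly $p_1\wedge\cdots\wedge p_n$.

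The main obstacle I anticipate is purely bookkeeping: organizing the local bases of $P_1$ so that the sign conventions in the formula for $\delta$ come out right, and making sure the two local charts ($b$ invertible; some $c_k$ invertible) are handled uniformly. There is no conceptual difficulty — $\delta$ is essentially the comparison of the two natural identifications of $\wedge^n P_1$ coming from the two ways of splitting the sequence — but the explicit alternating-sum identity requires care. Once the determinant is computed in one chart, $(ii)$ is a formal consequence, so the real work is the single local determinant calculation establishing $(i)$.
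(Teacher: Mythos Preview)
The paper does not actually prove this lemma; it merely recalls it from \cite[Lemma~2.7]{BR} and states it without argument, so there is no ``paper's own proof'' to compare against.

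Your approach---localize so that $P$ is free, produce explicit bases of $P_1$ on charts where $b$ or some $c_k$ is a unit, and compute---is correct in principle and would succeed, but it is considerably more work than needed, and the chart-by-chart bookkeeping you anticipate as the ``main obstacle'' is in fact entirely avoidable. The standard (and global) argument is this: observe that the formula for $\delta(q_1\wedge\cdots\wedge q_n)$ is exactly the image of $(a_0,p_0)\wedge q_1\wedge\cdots\wedge q_n \in \wedge^{n+1}(A\oplus P)$ under the canonical identification $\wedge^{n+1}(A\oplus P)\cong \wedge^n P$ (expand the wedge using $q_i=a_i\cdot 1+p_i$ and keep only the terms containing the generator $1$ of the first summand). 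Since $(a_0,p_0)$ maps to $1$ under $(b,-\ga)$, it spans a complement to $P_1$ in $A\oplus P$, so $\omega\mapsto (a_0,p_0)\wedge\omega$ is an isomorphism $\wedge^n P_1\cong\wedge^{n+1}(A\oplus P)$; composing gives $(i)$ immediately, with no localization.

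For $(ii)$ your sketch is on the right track but misses the key cancellation mechanism. Multiply by $b$ and use $ba_0=1+\ga(p_0)$ and $ba_i=\ga(p_i)$ for $i\geq 1$ to get
\[
b\,\delta(q_1\wedge\cdots\wedge q_n)=p_1\wedge\cdots\wedge p_n+\sum_{i=0}^n(-1)^i\ga(p_i)\,p_0\wedge\cdots\wedge\widehat{p_i}\wedge\cdots\wedge p_n.
\]
The sum on the right is precisely the contraction $\iota_\ga(p_0\wedge p_1\wedge\cdots\wedge p_n)$, and since $P$ has rank $n$ we have $\wedge^{n+1}P=0$, so $p_0\wedge\cdots\wedge p_n=0$ and the sum vanishes. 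This is the reason ``all cross terms cancel''; your proposed coordinate expansion would eventually discover this, but recognizing it as the vanishing of an $(n+1)$-fold wedge makes the computation one line.
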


\begin{lemma}\label{2.8}
Let $A$ be a ring and let $P$ be a projective $A$-module of rank
$n$. Assume $0\ra P_1 \ra A\op P \by {(b,-\ga)} A \ra 0$ is an exact
sequence. Then

$(i)$ The map $\gb: P_1 \ra A$ given by $\gb(q)=c$, where $q=(c,p)$,
has the property that $\gb(P_1)=\ga(P)$.

$(ii)$ The map $\Phi: P\ra P_1$ given by $\Phi(p)=(\ga(p),bp)$ has the
property that $\gb\circ \Phi=\ga$ and $\delta\circ \wedge^n \Phi$ is a
scalar multiplication by $b^{n-1}$, where $\delta$ is as in
(\ref{2.7}).
\end{lemma}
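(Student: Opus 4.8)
The plan is to extract everything from the explicit description $P_1=\{(c,p)\in A\op P : cb=\ga(p)\}$ of the kernel, together with a fixed element $(a_0,p_0)\in A\op P$ satisfying $a_0b-\ga(p_0)=1$ (such an element exists because $(b,-\ga)$ is surjective, and it is exactly the one used to define $\delta$ in (\ref{2.7})). For part (i), the inclusion $\ga(P)\subseteq\gb(P_1)$ together with the identity $\gb\circ\Phi=\ga$ of part (ii) come out at once: for any $p\in P$ the element $\Phi(p)=(\ga(p),bp)$ lies in $P_1$, since $b\ga(p)-\ga(bp)=0$, and $\gb(\Phi(p))=\ga(p)$. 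For the reverse inclusion, take $q=(c,p)\in P_1$, so that $cb=\ga(p)$; then $c=c(a_0b-\ga(p_0))=a_0\ga(p)-\ga(cp_0)=\ga(a_0p-cp_0)\in\ga(P)$, and therefore $\gb(P_1)=\ga(P)$.

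Turning to part (ii), it remains to compute $\delta\circ\wedge^n\Phi$. Since $\wedge^nP_1$ is generated over $A$ by decomposable elements, it is enough to evaluate on $\Phi(p_1)\wedge\cdots\wedge\Phi(p_n)$ for arbitrary $p_1,\dots,p_n\in P$. Substituting $\Phi(p_i)=(\ga(p_i),bp_i)$ into the formula for $\delta$ in (\ref{2.7})(i) and pulling the scalars $b$ out of the wedges, the leading term contributes $a_0b^n(p_1\wedge\cdots\wedge p_n)$, while the $i$-th term of the sum contributes $b^{n-1}(-1)^i\ga(p_i)(p_0\wedge p_1\wedge\cdots\wedge\widehat{p_i}\wedge\cdots\wedge p_n)$ --- only $n-1$ factors of $b$ come out here, the slot occupied by $p_0$ carrying none. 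Factoring $b^{n-1}$ out of the whole expression and using $a_0b=1+\ga(p_0)$, the bracket becomes $p_1\wedge\cdots\wedge p_n$ plus the alternating sum $\sum_{i=0}^n(-1)^i\ga(p_i)(p_0\wedge p_1\wedge\cdots\wedge\widehat{p_i}\wedge\cdots\wedge p_n)$. This alternating sum is the contraction of $p_0\wedge p_1\wedge\cdots\wedge p_n$ by $\ga$, hence vanishes because $p_0\wedge p_1\wedge\cdots\wedge p_n$ lies in $\wedge^{n+1}P$, which is zero as $P$ has rank $n$. Therefore $\delta(\Phi(p_1)\wedge\cdots\wedge\Phi(p_n))=b^{n-1}(p_1\wedge\cdots\wedge p_n)$, i.e.\ $\delta\circ\wedge^n\Phi$ is scalar multiplication by $b^{n-1}$, as claimed.

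The only step needing genuine care is the exterior-algebra bookkeeping in part (ii): matching the two coordinates of $\Phi(p_i)$ to the symbols appearing in the formula for $\delta$, keeping track that the leading term releases $n$ factors of $b$ whereas each term of the sum releases only $n-1$, and recognizing the leftover alternating sum as the contraction by $\ga$ of an element of $\wedge^{n+1}P$, which is zero under the rank hypothesis. Everything else is formal manipulation. (One might hope to shortcut the last step via (\ref{2.7})(ii), but that route only yields $b\cdot(\delta\circ\wedge^n\Phi)=b^n\cdot\mathrm{id}$, from which the factor $b$ cannot be cancelled in general; so the direct computation through the vanishing of $\wedge^{n+1}P$ seems to be the way to go.)
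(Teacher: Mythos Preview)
Your proof is correct. The paper does not actually prove this lemma; it merely quotes it from \cite{BR} (Lemma~2.8 there), so there is no ``paper's own proof'' to compare against. Your argument is the natural direct computation: write out $\delta(\Phi(p_1)\wedge\cdots\wedge\Phi(p_n))$ using the formula from (\ref{2.7})(i), factor out $b^{n-1}$, and recognize the remaining alternating sum as the contraction of an element of $\wedge^{n+1}P=0$. Your closing remark that (\ref{2.7})(ii) alone only gives $b\cdot(\delta\circ\wedge^n\Phi)=b^n\cdot\mathrm{id}$ is also correct and explains why the $\wedge^{n+1}P=0$ step is genuinely needed.

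One cosmetic slip: in the second paragraph you wrote ``Since $\wedge^nP_1$ is generated over $A$ by decomposable elements\ldots'' but the map $\delta\circ\wedge^n\Phi$ is an endomorphism of $\wedge^nP$, so it is $\wedge^nP$ whose generators you are testing. This does not affect the argument.
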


The following result can be deduced from (\ref{2.7},
\ref{2.8}). Briefly it says that if there exists a projective
$A$-module $P$ of rank $n$ with a $F$-orientation $\chi :\wedge^n
F\iso \wedge^n P$ such that $e(P,\chi)=(J,w_J)$ and if $\ol a \in A/J$
is a unit, then there exists another projective $A$-module $P_1$ with
$[P_1]=[P]$ in $K_0(A)$ and a $F$-orientation $\chi_1 : \wedge^n F
\iso \wedge^n P_1$ of $P_1$ such that $e(P_1,\chi_1)=(J,\ol
{a^{n-1}}w_J)$.

\begin{lemma}\label{5.1}
Let $A$ be a ring of dimension $n\geq 2$. Let $P$ and $F=Q\op A$ be
projective $A$-modules of rank $n$ and let $\chi :\wedge^n F\iso
\wedge^{n} P$ be a $F$-orientation of $P$.  Let $\ga : P \surj J$ be a
generic surjection of $P$ and let $(J,w_J)$ be obtained from
$(\ga,\chi)$. Let $a,b\in A$ with $ab=1$ modulo $J$ and let $P_1$ be
the kernel of the surjection $(b,-\ga) : A\op P \surj A$. Let $\gb :
P_1\surj J$ be as in (\ref{2.8}) and let $\chi_1$ be the
$F$-orientation of $P_1$ given by $\delta^{-1} \chi$, where $\delta$
is as in (\ref{2.7}). Then $(J,\ol {a^{n-1}})$ is obtained from
$(\gb,\chi_1)$.
\end{lemma}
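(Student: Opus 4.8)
The plan is to unwind the definitions so that the computation of the local $F$-orientation obtained from $(\gb,\chi_1)$ reduces to a direct comparison via the isomorphism $\delta$ of Lemma~\ref{2.7}. First I would fix, once and for all, the data furnished by the hypotheses: since $ab=1$ modulo $J$, there is $(a_0,p_0)\in A\op P$ with $a_0b-\ga(p_0)=1$ (indeed one may take $a_0=a$ and choose $p_0$ with $\ga(p_0)=ab-1\in J$, using that $\ga$ is a surjection onto $J$). This is exactly the setup of Lemmas~\ref{2.7} and~\ref{2.8}, so $P_1=\ker((b,-\ga))$ fits into the exact sequence $0\ra P_1\ra A\op P\by{(b,-\ga)}A\ra 0$, the map $\gb:P_1\ra A$, $\gb(c,p)=c$, satisfies $\gb(P_1)=\ga(P)=J$ by Lemma~\ref{2.8}(i), so $\gb:P_1\surj J$ is a generic surjection, and $\delta:\wedge^n P_1\iso\wedge^n P$ is the stated isomorphism; thus $\chi_1=\delta^{-1}\chi:\wedge^n F\iso\wedge^n P_1$ is a genuine $F$-orientation of $P_1$.

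Next I would reduce everything modulo $J$. Write "bar" for reduction modulo $J$; then $A/J$ is zero-dimensional, so every projective module of constant rank over it is free. The pair $(J,w_J)$ obtained from $(\ga,\chi)$ is by definition $\ol\ga\circ\ol\gm:F/JF\surj J/J^2$ for some isomorphism $\ol\gm:F/JF\iso P/JP$ with $\wedge^n(\ol\gm)=\ol\chi$. Similarly the pair obtained from $(\gb,\chi_1)$ is $\ol\gb\circ\ol{\gm_1}:F/JF\surj J/J^2$ for any chosen isomorphism $\ol{\gm_1}:F/JF\iso P_1/JP_1$ with $\wedge^n(\ol{\gm_1})=\ol{\chi_1}=\ol{\delta}^{-1}\,\ol\chi$. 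Because any two local $F$-orientations of $J$ differ by composition with an automorphism of $F/JF$, it suffices to exhibit one convenient choice of $\ol{\gm_1}$ and compute the determinant of the resulting automorphism of $F/JF$. The natural candidate is $\ol{\gm_1}=\ol\Phi\circ\ol\gm$ where $\Phi:P\ra P_1$, $\Phi(p)=(\ga(p),bp)$, is the map of Lemma~\ref{2.8}(ii): modulo $J$ we have $\ga(p)\in J$, hence $\ol\Phi(\ol p)=(\ol 0,\ol b\,\ol p)$, and since $\ol b=\ol a^{-1}$ is a unit, $\ol\Phi$ is an isomorphism $\ol P\iso\ol{P_1}$.

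With this choice the verification splits into two points. First, compatibility of the orientations: by Lemma~\ref{2.8}(ii), $\delta\circ\wedge^n\Phi$ is multiplication by $b^{n-1}$, so $\wedge^n(\ol{\gm_1})=\wedge^n(\ol\Phi)\,\wedge^n(\ol\gm)=\ol b^{\,n-1}\,\ol\delta^{-1}\,\ol\chi=\ol a^{\,-(n-1)}\,\ol{\chi_1}$; thus $\ol{\gm_1}$ is not quite normalized to $\ol{\chi_1}$ but differs from a normalized isomorphism by a scalar $\ol a^{\,n-1}$, which is precisely where the twist $\ol{a^{n-1}}$ will enter. Second, compatibility of the surjections: again by Lemma~\ref{2.8}(ii), $\gb\circ\Phi=\ga$, so $\ol\gb\circ\ol{\gm_1}=\ol\gb\circ\ol\Phi\circ\ol\gm=\ol\ga\circ\ol\gm=w_J$ as maps $F/JF\surj J/J^2$. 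Combining the two, if $\ol{\gm_1}'$ denotes the honestly normalized isomorphism (obtained from $\ol{\gm_1}$ by post-composing with an automorphism of $\ol{P_1}$ of determinant $\ol a^{\,n-1}$, equivalently pre-composing with an automorphism of $F/JF$ of determinant $\ol a^{\,n-1}$), then the local $F$-orientation obtained from $(\gb,\chi_1)$ is $\ol b^{\,n-1}w_J=\ol{a^{n-1}}\,w_J$ — wait, one must be careful about the direction of the twist here, so I would track the determinant bookkeeping explicitly: the upshot is that the pair obtained from $(\gb,\chi_1)$ is $(J,\ol{a^{n-1}})$ in the notation $(J,\ol c\,w_J)$ of the preceding paragraph. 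The only real subtlety — the step I expect to be the main obstacle — is getting this last determinant bookkeeping exactly right: one must confirm that the scalar $b^{n-1}$ appearing in $\delta\circ\wedge^n\Phi$, once transported through the definition of "the pair obtained from $(\gb,\chi_1)$" (which involves choosing a $\wedge^n$-normalized isomorphism and then reading off the induced surjection), produces the twist $\ol{a^{n-1}}$ and not $\ol{b^{n-1}}$ or its inverse; since $\ol a\,\ol b=1$ in $A/J$ this is merely a matter of consistent conventions, but it is where the proof genuinely has content rather than formal manipulation.
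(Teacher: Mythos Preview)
Your approach is correct and is exactly what the paper intends: it gives no proof of this lemma beyond the remark that it ``can be deduced from'' Lemmas~\ref{2.7} and~\ref{2.8}, and your argument via $\Phi(p)=(\ga(p),bp)$, the identities $\gb\circ\Phi=\ga$ and $\delta\circ\wedge^n\Phi=b^{n-1}$, and reduction modulo $J$ is precisely that deduction. The one slip is the line ``$\ol b^{\,n-1}w_J=\ol{a^{n-1}}\,w_J$'', which is false as written; but your own bookkeeping two lines earlier is right---pre-composing $\ol{\gm_1}$ with an automorphism of $F/JF$ of determinant $\ol a^{\,n-1}$ normalizes it to $\ol{\chi_1}$, and then $\ol\gb\circ\ol{\gm_1}'=w_J\circ\theta$ with $\det\theta=\ol a^{\,n-1}$ is by definition $\ol{a^{n-1}}\,w_J$---so the conclusion you state is correct once that stray $\ol b^{\,n-1}$ is deleted.
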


Using the above results and following the proof of (\cite{BR}, Lemmas
5.3, 5.4 and 5.5) respectively, the next three results follows.

\begin{lemma}\label{5.3}
Let $A$ be a ring of dimension $n\geq 2$ and let $F=Q\op A^2$ be a
projective $A$-module of rank $n$. Let $J$ be an ideal of $A$ of
height $n$ and let $w_J : F/JF \surj J/J^2$ be a surjection. Suppose
$w_J$ can be lifted to a surjection $\ga: F \surj J$. Let $\ol a\in
A/J$ be a unit and let $\theta$ be an automorphism of $F/JF$ with
determinant $\ol {a^2}$. Then the surjection $w_J\circ \theta : F/JF
\surj J/J^2$ can be lifted to a surjection $\gamma : F\surj J$.
\end{lemma}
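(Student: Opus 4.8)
The claim is Lemma~\ref{5.3}: starting from $F = Q\op A^2$ and a surjection $\ga : F \surj J$ lifting $w_J$, we must lift the twisted orientation $w_J \circ \gt$ (where $\det \gt = \ol{a^2}$) to a genuine surjection $\gm : F \surj J$. The plan is to exploit the \emph{extra} free summand: write $F = F' \op A$ with $F' = Q \op A$, and use the standard Swan--Bass-type trick that an automorphism of the reduction $F/JF$ of determinant equal to a square can be realized after adjoining one free generator. The key point is that $w_J \circ \gt$ and $w_J$ differ by $\ol{a^2} \in (A/J)^\ast$, and squares are precisely the determinants one can "absorb" by modifying the lift through a transvection-type construction on $F' \op A$.

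**Main steps.** First I would reduce to understanding the discrepancy: by the discussion preceding Lemma~\ref{2.7} (the paragraph on $\ol b w_J$), any two local $F$-orientations of $J$ differ by a unit of $A/J$, and here that unit is $\ol{a^2}$; so it suffices to show the effect of multiplying a \emph{liftable} $w_J$ by a square unit $\ol{a^2}$ keeps it liftable. Second, I would invoke Lemma~\ref{5.1} with this particular $a$ (and $b$ with $ab \equiv 1 \bmod J$): from $\ga : F \surj J$ — thinking of the fixed splitting $F = \wt Q \op A$ with $\wt Q = Q \op A$ — we form $P_1 = \ker\big((b,-\ga) : A \op F \surj A\big)$, obtain $\gb : P_1 \surj J$ via Lemma~\ref{2.8}, and the orientation $\chi_1 = \gd^{-1}\chi$, so that $(J, \ol{a^{n-1}}\, w_J)$ is obtained from $(\gb, \chi_1)$. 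Third — and this is where the hypothesis $F = Q \op A^2$ (two free summands, not one) is essential — I would show $P_1 \iso F$: since $P_1 \op A \iso A \op F$ stably and $F$ has a free summand $A$, cancellation is not automatic, but $P_1$ is a rank-$n$ projective that is stably free relative to $F$ in a way that, because $F$ itself has $A^2$ as a direct summand, lets us split off a free summand from $P_1$ and match it with $F$; concretely $P_1 = \ker((b,-\ga): A\op F \surj A)$ with $(a_0,p_0)$ a section forces $P_1 \iso F$ once one free summand is available to absorb the unimodular row. Fourth, transporting along an isomorphism $P_1 \iso F$ carrying $\chi_1$ to $\chi$ (adjusting by an automorphism of $F$ of determinant $1$, which does not change the local orientation class), the surjection $\gb$ becomes a surjection $F \surj J$ whose associated local orientation is $\ol{a^{n-1}}\, w_J$. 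Finally, iterating this — or rather, observing that the map "multiply by $\ol a$" composed with itself $n-1$ times realizes $\ol{a^{n-1}}$, and separately that we can also absorb one more factor of $\ol a$ using the remaining structure to land exactly on $\ol{a^2}\, w_J = w_J \circ \gt$ — I obtain the desired lift $\gm : F \surj J$. (One may instead apply Lemma~\ref{5.1} directly with a cleverly chosen $a'$ so that $a'^{\,n-1} \equiv a^2$; when $\gcd(n-1, \ldots)$ obstructs this, the two-free-summand hypothesis is used to run the $P_1$-construction in a way that contributes exactly a square.)

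**Main obstacle.** The delicate point is the isomorphism $P_1 \iso F$ in step three: $P_1$ is only known a priori to be \emph{stably} isomorphic to $F$, and rank-$n$ projectives over an $n$-dimensional ring need not cancel. The resolution is that $P_1$ here is not an arbitrary stably-free-type module but the kernel of an explicit map $A \op F \surj A$ splitting off $F = Q \op A^2$; the two free summands $A^2 \subseteq F$ give room to perform an elementary change of basis (a transvection on $A \op A \op (\text{rest})$) trivializing the unimodular row $(b, -\ga)$ restricted to the free part, after which $P_1$ visibly contains a free summand and Bass cancellation over $\dim \le n$ applies to identify $P_1 \iso F$. I would present this carefully, since the whole lemma — and the reason $A^2$ appears rather than $A$ — hinges on it; the orientation-tracking (that $\chi_1 = \gd^{-1}\chi$ matches $\chi$ up to $\SL$) is then routine via Lemma~\ref{2.7}(ii) and Lemma~\ref{2.8}(ii), and the lifting of the $\SL$-discrepancy is handled by Proposition~\ref{B-Roy4.1}.
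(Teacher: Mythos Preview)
Your proposal has two genuine gaps, and neither is a matter of missing detail.

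\textbf{The isomorphism $P_1 \cong F$ is not established.} You invoke Bass cancellation, but Bass's theorem gives cancellation only for projectives of rank strictly greater than $\dim A$; here $\operatorname{rank} P_1 = n = \dim A$, which is precisely the critical case where cancellation can fail. Your proposed workaround --- ``trivializing the unimodular row $(b,-\ga)$ restricted to the free part'' --- breaks down for two reasons. First, writing $\ga=(\psi,c,d)$ on $Q\op A^2$, the restriction of $(b,-\ga)$ to the free summand $A\op A^2\subset A\op F$ is $(b,-c,-d)$, and this row need not be unimodular over $A$: one only knows $bA+\psi(Q)+cA+dA=A$, not $bA+cA+dA=A$. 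Second, even when such a length-three row is unimodular, transforming it to $(1,0,0)$ by elementary moves is exactly the completion problem, which is not solvable in general over rings of dimension $\ge 2$. So the sentence ``after which $P_1$ visibly contains a free summand and Bass cancellation \ldots\ applies'' does not go through.

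\textbf{The exponent mismatch is not resolved.} Even granting $P_1\cong F$, Lemma~\ref{5.1} changes the local orientation by $\ol{a^{\,n-1}}$, whereas you need $\ol{a^2}$. Your two proposed fixes --- iterating, or choosing $a'$ with $\ol{a'}^{\,n-1}=\ol{a^2}$ --- are left open; you yourself flag the $\gcd$ obstruction and then assert that ``the two-free-summand hypothesis is used to run the $P_1$-construction in a way that contributes exactly a square'' without indicating any mechanism. This is the crux of the lemma, and it is not addressed.

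The argument in \cite{BR}, Lemma~5.3 (which the paper follows verbatim) does not route through an auxiliary module $P_1$ or any cancellation statement. One first uses $\SL_{A/J}(F/JF)=\EL_{A/J}(F/JF)$ together with Proposition~\ref{B-Roy4.1} to reduce to the case where $\gt$ acts as $\operatorname{diag}(1_Q,\ol a,\ol a)$ on the two free coordinates, and then constructs the lift $\gm:F\surj J$ directly by an explicit manipulation of the pair $(c,d)$ using the relation $ab\equiv 1\pmod J$. The role of the hypothesis $F=Q\op A^2$ is to provide two free coordinates across which the factor $\ol{a^2}$ is distributed as $\ol a\cdot\ol a$, not to enable a stable-isomorphism argument. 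Reworking your proof along these direct lines would avoid both obstacles above.
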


\begin{lemma}\label{5.4}
Let $A$ be a ring of dimension $n\geq 2$ and let $F=Q\op A^2$ be a
projective $A$-module of rank $n$. Let $J$ be an ideal of $A$ of
height $n$ and let $w_J$ be a local $F$-orientation of $J$. Let $\ol
a\in A/J$ be a unit. Then $(J,w_J) =(J,\ol {a^2} w_J)$ in $E(A,F)$.
\end{lemma}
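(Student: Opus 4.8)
The plan is to follow the argument of \cite{BR}, Lemma 5.4: first I would complete $(J,w_J)$ to a global $F$-orientation by adjoining a complementary pair that comes from an honest surjection of $F$; then, using the free summand $A^2$ of $F=Q\op A^2$ through Lemma \ref{5.3}, I would twist the $J$-component of that orientation by $\ol{a^2}$ while leaving the complementary component unchanged; finally I would subtract the two resulting relations.

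For the first step, lift $w_J$ to a homomorphism $\phi:F\ra J$ (possible since $F$ is projective), so that $\phi(F)+J^2=J$. By the standard moving lemma of this theory --- a local $F$-orientation of a height-$n$ ideal lifts to a surjection of $F$ onto its intersection with a comaximal ideal of height $n$ (cf.\ the general position arguments in \cite{BR}) --- one may modify $\phi$ by a homomorphism into $J^2$, which does not change the induced surjection $w_J:F/JF\surj J/J^2$, so that $\Delta:=\phi$ becomes a surjection $F\surj J\cap J_1$ with $J_1$ of height $n$ comaximal with $J$. (Once $\phi(F)$ has height $n$, the conditions $\phi(F)\subseteq J$, $\phi(F)+J^2=J$ and $\dim A=n$ force all associated primes of $\phi(F)$ to be maximal of height $n$, whence $\phi(F)=J\cap J_1$ with $J_1$ comaximal with $J$.) Let $w$ be the local $F$-orientation of $J\cap J_1$ given by $\Delta$; it restricts to $w_J$ modulo $J$ and to some $w_{J_1}$ modulo $J_1$, and since $w$ lifts to $\Delta$ we obtain $(J,w_J)+(J_1,w_{J_1})=0$ in $E(A,F)$.

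For the second step, comaximality gives $A/(J\cap J_1)\iso A/J\times A/J_1$, hence $F/(J\cap J_1)F\iso F/JF\times F/J_1F$. Pick $\theta\in\Aut(F/JF)$ with $\det\theta=\ol{a^2}$, set $\Theta=\theta\times\mathrm{id}$, and let $\ol c\in A/(J\cap J_1)$ be the unit corresponding to $(\ol a,1)$, so that $\det\Theta=\ol{c^2}$. Since $F=Q\op A^2$ and $w$ is global, Lemma \ref{5.3} --- whose hypotheses hold with $J\cap J_1$, $w$, $\ol c$, $\Theta$ in the roles of $J$, $w_J$, $\ol a$, $\theta$ --- shows that $w\circ\Theta$ is again a global $F$-orientation of $J\cap J_1$. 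Its restrictions are $w_J\circ\theta=\ol{a^2}w_J$ modulo $J$ and $w_{J_1}$ modulo $J_1$, so in $G$ we have $(J\cap J_1,\,w\circ\Theta)=(J,\ol{a^2}w_J)+(J_1,w_{J_1})$, and as $w\circ\Theta$ is global this element vanishes in $E(A,F)$. Subtracting the relation of the previous paragraph then yields $(J,w_J)=(J,\ol{a^2}w_J)$ in $E(A,F)$.

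The step I expect to need the most care is the moving lemma that turns the lift $\phi$ of $w_J$ into a surjection of $F$ onto a height-$n$ ideal $J\cap J_1$ with $J_1$ comaximal with $J$, without disturbing $w_J$ modulo $J^2$; after that the argument is bookkeeping with the product decomposition $A/(J\cap J_1)\iso A/J\times A/J_1$, the one essential algebraic input being Lemma \ref{5.3}, which is precisely where the hypothesis $F=Q\op A^2$ is used.
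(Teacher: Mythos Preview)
Your proposal is correct and follows essentially the same route as the paper, which simply says the result is proved by following \cite{BR}, Lemma 5.4: lift $w_J$ to a surjection $F\surj J\cap J_1$ (this is \cite{BR}, Corollary 2.14, the ``moving lemma'' you allude to), apply Lemma~\ref{5.3} to the unit $(\ol a,1)\in A/J\times A/J_1$ to make the $\ol{a^2}$-twisted orientation global as well, and subtract the two relations. The only cosmetic point is that the moving lemma yields $J_1$ of height $\geq n$, so one should note separately (or absorb into the same argument) the degenerate case $J_1=A$, where both $(J,w_J)$ and $(J,\ol{a^2}w_J)$ vanish directly by Lemma~\ref{5.3}.
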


\begin{lemma}\label{5.5}
Let $A$ be a ring of dimension $n\geq 2$ and let $F=Q\op A$ be a
 projective $A$-module of rank $n$. Let $J$ be an ideal of $A$ of
 height $n$ and let $w_J$ be a local $F$-orientation of $J$.  Suppose
 $(J,w_J) \not= 0$ in $E(A,F)$. Then there exists an ideal $J_1$ of
 height $n$ which is comaximal with $J$ and a local $F$-orientation
 $w_{J_1}$ of $J_1$ such that $(J,w_J)+(J_1,w_{J_1})=0$ in $E(A,F)$.
 Further, given any ideal $K$ of $A$ of height $\geq 1$, $J_1$ can be
 chosen to be comaximal with $K$.
\end{lemma}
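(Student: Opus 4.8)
The plan is to follow the proof of (\cite{BR}, Lemma 5.5): by a general position argument I would produce an ideal $J_1$ that ``completes'' $J$, and then read off the required relation in the group $G$. First, using projectivity of $F$, lift the surjection $w_J : F/JF \surj J/J^2$ to a homomorphism $\phi_0 : F \ra J$, so that $\phi_0(F)+J^2 = J$; by the determinant trick $\phi_0(F) = J\cap J_1'$ for an ideal $J_1'$ automatically comaximal with $J$ (the unit occurring lies in $1+J$). The crucial step is then to perturb $\phi_0$ by a homomorphism $\psi : F \ra J^2$ --- which leaves the induced map $F/JF \ra J/J^2$, hence $w_J$, unchanged --- so that the new residual ideal $J_1$ has height $\geq n$ and is comaximal with the prescribed ideal $K$. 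This I would do by prime avoidance: the finitely many ``bad'' primes (those minimal over $\phi_0(F)$ of height $<n$ and not containing $J$, and those containing $\phi_0(F)+K$ but not $J$) all fail to contain $J$, hence fail to contain $J^2$, so each can be avoided by adjoining a suitable element of $J^2$; such an element can be realised inside $\psi(F)$ precisely because $F=Q\op A$ surjects onto $A$, which is where the free rank-one summand enters --- exactly as in the moving lemmas of \cite{BR} (cf. Proposition \ref{BK2.11}). The outcome is a surjection $\phi : F \surj J\cap J_1$ with $J_1$ of height $\geq n$ comaximal with both $J$ and $K$, whose reduction modulo $J$ represents $w_J$.

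If $J_1 = A$ here, then $\phi : F \surj J$ is a surjection lifting $w_J$, so $w_J$ is a global $F$-orientation, whence $(J,w_J)\in H$ and $(J,w_J)=0$ in $E(A,F)$, contrary to hypothesis. Hence $J_1 \neq A$, and since $\dim A = n$ while $J_1$ has height $\geq n$, every prime minimal over $J_1$ has height exactly $n$; so $J_1$ has height $n$. Now let $w$ be the local $F$-orientation of $J\cap J_1$ induced by $\phi$, let $w_{J_1}$ be the local $F$-orientation of $J_1$ that $w$ induces, and note that the one $w$ induces on $J$ is $w_J$. Since $J$ and $J_1$ are comaximal their primary decompositions are disjoint, so by the very definition of the element of $G$ attached to a pair, $(J\cap J_1, w)=(J,w_J)+(J_1,w_{J_1})$ in $G$. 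On the other hand $\phi : F \surj J\cap J_1$ exhibits $w$ as a global $F$-orientation, so $(J\cap J_1, w)\in H$, i.e. $(J\cap J_1, w)=0$ in $E(A,F)$. Combining, $(J,w_J)+(J_1,w_{J_1})=0$ in $E(A,F)$ with $J_1$ comaximal with $J$ and with $K$, as required.

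The only genuinely non-formal step is the general position argument in the first paragraph: simultaneously forcing $J_1$ to have height $n$ and to be comaximal with an arbitrary $K$, while keeping the lift compatible with $w_J$. This is the standard difficulty in this circle of ideas, it is where the hypothesis $F=Q\op A$ is used, and it is handled by the prime-avoidance/moving-lemma techniques of \cite{BR}; the passage to the relation in $G/H$ in the second paragraph is then purely formal.
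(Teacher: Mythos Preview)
Your proposal is correct and follows exactly the approach the paper indicates: the paper does not give an independent proof of this lemma but simply says to follow the proof of \cite{BR}, Lemma 5.5, which is precisely the lift-and-perturb argument you outline (the relevant moving lemma being \cite{BR}, Corollary 2.14, which works verbatim for $F=Q\op A$ thanks to the free rank-one summand). Your identification of where the hypothesis $F=Q\op A$ enters, and your handling of the case $J_1=A$ via the assumption $(J,w_J)\neq 0$, match the original argument.
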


The following result is similar to (\cite{BR}, Lemma 5.6).

\begin{lemma}\label{5.6}
Let $A$ be an affine domain of dimension $n\geq 2$ over a field $k$
and let $f$ be a non-zero element of $A$. Let $F=Q\op A^2$ be a
projective $A$-module of rank $n$ and let $J$ be an ideal of $A$ of
height $n$ such that $J/J^2$ is generated by $n$ elements. Suppose
that $(J,w_J)\not= 0$ in $E(A,F)$ but the image of $(J,w_J)$ is zero
in $E(A_f,F_f)$. Then there exists an ideal $J_2$ of $A$ of height $n$
such that $(J_2)_f=A_f$ and $(J,w_J)= (J_2,w_{J_2})$ in $E(A,F)$.
\end{lemma}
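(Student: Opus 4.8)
The plan is to mimic the strategy of (\cite{BR}, Lemma 5.6), adapting it to the setting where the free direct summand of $F$ has rank $2$ so that Lemma \ref{5.4} is available. Since the image of $(J,w_J)$ is zero in $E(A_f,F_f)$ and $J/J^2$ is generated by $n$ elements, Theorem \ref{4.2} applied over $A_f$ tells us that $w_J$ localizes to a \emph{global} $F_f$-orientation of $J_f$: there is a surjection $\gamma: F_f \surj J_f$ lifting $(w_J)_f$. First I would clear denominators: after replacing $\gamma$ by $f^N\gamma$ for suitable $N$ (which only changes the induced orientation on $J/J^2$ by the unit $\ol{f^N}$, harmless to track), one obtains a linear map $\ga: F \ra J$ whose localization at $f$ is the surjection $\gamma$. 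Here I would invoke Proposition \ref{BK2.11} with the ideal $I=J$ (note $f$ is a non-zerodivisor modulo $J$ since $A$ is a domain and $J$ has height $n$, hence $f\notin J$ provided $f$ is chosen outside the finitely many minimal primes of $J$ — if $f\in J$ the statement needs a small adjustment, but one may reduce to this case): there is $\Psi\in EL(F_f^*)$ such that $\gb=\Psi(\ga)\in F^*$ satisfies $\gb(F)=J_2$, an ideal of height $n$ contained in $J$, and $(J_2)_f=J_f=A_f$.

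Next I would compare the two ideals $J$ and $J_2$, both contained between $J_2$ and $A$, with $J_2\subseteq J$ and $J/J_2$ supported away from $f$. Write $J_2=J\cap J'$ where $J'$ is the ``complementary'' ideal; one checks $J'$ has height $\geq n$ and $J'_f=A_f$, and that $\gb$ induces on $J/J^2$ the orientation $w_J$ twisted by a unit coming from the element of $EL(F_f^*)$ used — crucially, because $\Psi\in EL$, the determinant ambiguity is trivial after passing to $E(A,F)$ via Lemma \ref{5.4} (this is exactly where $F=Q\op A^2$ is used: the determinant of the relevant automorphism is a square $\ol{a^2}$, so $(J,\ol{a^2}w_J)=(J,w_J)$). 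Then $(J_2,w_{J_2})=(J,w_J)+(J',w_{J'})$ in $G$, where $w_{J_2}$ is the $F$-orientation induced by $\gb$; and since $\gb: F\surj J_2$ is a genuine surjection onto an ideal of height $n$ (the height is exactly $n$ because $J_2\subseteq J$ and $\hh J = n$), the pair $(J_2,w_{J_2})$ is a global $F$-orientation, hence zero in $E(A,F)$. Therefore $(J,w_J) = -(J',w_{J'})$ in $E(A,F)$.

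Finally I would rename: set the desired ideal to be $J'$ rather than $J_2$, or rather produce from $J'$ an ideal with the stated properties. We have $(J,w_J)=-(J',w_{J'})=(J',w_{J'}')$ where $w_{J'}'$ is the orientation $-w_{J'}$, realized by composing with an automorphism of $F/J'F$ of determinant $-1$ (which, since $F$ has a free summand of rank $\geq 2$, can be taken of determinant $1$ after another application of Lemma \ref{5.4}, or one simply carries the sign). Since $J'_f=A_f$ and $\hh J'\geq n$, and in fact $\hh J' = n$ (it cannot exceed $\dim A_f < \dim A$... more carefully: $J'$ is a surjective image of $F$ hence $\hh J'\leq n$, so $\hh J' = n$), the ideal $J_2$ of the statement is this $J'$. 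The main obstacle I anticipate is bookkeeping the orientation/unit twists accurately through the two passages (localization-then-globalization via Proposition \ref{BK2.11}, and the $EL$ versus $SL$ determinant discrepancy), and verifying that the leftover ideal $J'$ genuinely has height exactly $n$ rather than merely $\geq n$; both are handled by the rank-$2$ free summand hypothesis through Lemma \ref{5.4} and by the height count for surjective images of a rank-$n$ projective module.
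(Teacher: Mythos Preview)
Your approach has a genuine gap at the very last step. After you obtain $(J,w_J)=-(J',w_{J'})$ with $J'_f=A_f$, you claim that $-(J',w_{J'})=(J',w'_{J'})$ where $w'_{J'}$ is $w_{J'}$ composed with an automorphism of determinant $-1$. This is not correct: twisting the local orientation by a unit $u\in (A/J')^\ast$ produces the element $(J',u\,w_{J'})$ of $E(A,F)$, which is in general a \emph{different} element, not the additive inverse $-(J',w_{J'})$. There is no relation of the form $(J',w)+(J',(-1)w)=0$ available here, and Lemma~\ref{5.4} only tells you that a \emph{square} twist leaves the class unchanged; it says nothing about inverses. So you are left with $(J,w_J)=-(J',w_{J'})$ and no way to rewrite the right-hand side as a single pair $(J_2,w_{J_2})$ with $(J_2)_f=A_f$. (A smaller slip: you write $(J_2)_f=J_f=A_f$, but $J_f$ need not equal $A_f$; the hypothesis only says the class vanishes in $E(A_f,F_f)$, which can happen with $J_f$ proper. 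Also, $f$ being a non-zerodivisor modulo $J$ is not automatic---$f$ may lie in some of the maximal ideals over $J$.)

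The paper's proof fixes exactly this sign problem by inserting one extra move at the start. Using Lemma~\ref{5.5}, choose an ideal $J_1$ of height $n$ comaximal with $Jf$ such that $(J,w_J)+(J_1,w_{J_1})=0$. Now apply Theorem~\ref{4.2} and Proposition~\ref{BK2.11} to $J_1$ rather than to $J$: since $f$ is a unit modulo $J_1$, the non-zerodivisor hypothesis is free, and one gets a surjection $\gb:F\surj J_1\cap J_2$ with $(J_2)_f=A_f$ and $J_1+J_2=A$. The square-twist bookkeeping (via $f^{2k}$ and Lemma~\ref{5.4}) ensures $\gb$ induces the correct orientation on $J_1$, so $(J_1,w_{J_1})+(J_2,w_{J_2})=0$. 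Combining with $(J,w_J)+(J_1,w_{J_1})=0$ gives $(J,w_J)=(J_2,w_{J_2})$ on the nose---no sign left to repair. The passage through the auxiliary $J_1$ is not cosmetic; it is precisely what makes the signs come out right and simultaneously guarantees the non-zerodivisor condition needed for Proposition~\ref{BK2.11}.
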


\begin{proof}
Since $(J,w_J)\not= 0$ in $E(A,F)$, but its image is zero in
$E(A_f,F_f)$, we see that $f$ is not a unit in $A$. By (\ref{5.5}), we
can choose an ideal $J_1$ of height $n$ which is comaximal with $Jf$
such that $(J,w_J)+(J_1,w_{J_1})=0$ in $E(A,F)$.  Since the image of
$(J,w_J)$ is zero in $E(A_f,F_f)$, it follows that the image of
$(J_1,w_{J_1})$ is zero in $E(A_f,F_f)$.

By (\ref{4.2}), there exists a surjection $\ga: F_f \surj (J_1)_f$
such that $\ga \ot A_f/(J_1)_f = (w_{J_1})_f$. Choose a positive
integer $k$ such that $f^{2k}\ga : F \ra J_1$. Since $f$ is a unit
modulo $J_1$, by (\ref{5.4}), $(J_1,w_{J_1})=(J_1,\ol
{f^{2kn}}w_{J_1})$ in $E(A,F)$. By (\ref{BK2.11}), there exists $\Psi
\in EL(F_f^*)$ such that $\gb=\Psi(\ga) \in F^*$ and $\gb(F)\subset
J_1$ is an ideal of height $n$. Thus $\gb(F)=J_1\cap J_2$, where $J_2$
is an ideal of $A$ of height $n$ such that $(J_2)_f=A_f$. Hence
$J_1+J_2=A$.  From the surjection $\gb$, we get
$(J_1,w_{J_1})+(J_2,w_{J_2})=0$ in $E(A,F)$. Since
$(J,w_J)+(J_1,w_{J_1})=0$ in $E(A,F)$, it follows that
$(J,w_J)=(J_2,w_{J_2})$ in $E(A,F)$. This proves the result. $\hfill
\square$
\end{proof}

Using (\ref{4.3}, \ref{5.4} and \ref{5.6}) and following the proof of
(\cite{BR}, Lemma 5.8), the following result can be proved.

\begin{lemma}\label{5.8}
Let $A$ be an affine domain of dimension $n\geq 2$ over a field
$k$. Let $P$ and $F=Q\op A^2$ be projective $A$-modules of rank $n$
with $\wedge^n P\iso \wedge^{n} F$. Let $f$ be a non-zero element of
$A$. Assume that every generic surjection ideal of $P$ is surjective
image of $F$. Then every generic surjection ideal of $P_f$ is
surjective image of $F_f$.
\end{lemma}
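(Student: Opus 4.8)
Fix an $F$-orientation $\chi:\wedge^n F\iso\wedge^n P$ (possible since $\wedge^n P\cong\wedge^n F$); $A_f$ is again an affine domain of dimension $n$ over $k$, $F_f=Q_f\op A_f^2$ is projective of rank $n$, and $\chi_f$ is an $F_f$-orientation of $P_f$. Let $I$ be a generic surjection ideal of $P_f$, realized by $\phi:P_f\surj I$, and let $w_I$ be the local $F_f$-orientation of $I$ obtained from $(\phi,\chi_f)$, so that $e(P_f,\chi_f)=(I,w_I)$ in $E(A_f,F_f)$. By Theorem~\ref{4.2} applied over $A_f$ it suffices to prove $e(P_f,\chi_f)=0$ (then $w_I$ lifts to a surjection $F_f\surj I$, and this works for every generic surjection ideal of $P_f$ at once). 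If $e(P,\chi)=0$ then $P$, hence $P_f$, has a unimodular element by Corollary~\ref{4.4}, so $e(P_f,\chi_f)=0$; thus we may assume $e(P,\chi)\ne 0$.

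\textbf{Using the hypothesis.}
By the moving lemma (applying Lemma~\ref{5.5} twice) there is a representative $(J_0,w_{J_0})=e(P,\chi)$ in $E(A,F)$ with $J_0$ comaximal with $(f)$, and by Corollary~\ref{4.3} this is realized by a generic surjection $\alpha_0:P\surj J_0$ of $P$. Since $J_0+(f)=A$, the natural map $A/J_0\to A_f/(J_0)_f$ is an isomorphism; in particular $(J_0)_f$ is a proper ideal of $A_f$ of height $n$, $\alpha_{0,f}:P_f\surj(J_0)_f$ is a generic surjection of $P_f$, and, by well-definedness of the Euler class over $A_f$, $((J_0)_f,w_{(J_0)_f})=e(P_f,\chi_f)$, where $w_{(J_0)_f}$ is obtained from $(\alpha_{0,f},\chi_f)$. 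As $J_0$ is a generic surjection ideal of $P$, the hypothesis provides a surjection $\beta:F\surj J_0$; its induced local orientation of $J_0$ is global, hence equals $\bar b\,w_{J_0}$ for a unit $\bar b\in(A/J_0)^{*}$ with $(J_0,\bar b\,w_{J_0})=0$ in $E(A,F)$. Localizing $\beta$ and using $A/J_0\cong A_f/(J_0)_f$, the surjection $\beta_f:F_f\surj(J_0)_f$ has induced orientation $\bar b\,w_{(J_0)_f}$, which is global, so $((J_0)_f,\bar b\,w_{(J_0)_f})=0$ in $E(A_f,F_f)$.

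\textbf{Killing $\bar b$, and where the difficulty lies.}
It remains to remove the unit $\bar b$, i.e. to show $((J_0)_f,w_{(J_0)_f})=0$; then $e(P_f,\chi_f)=0$ and Theorem~\ref{4.2} over $A_f$ finishes the proof. One normalizes $\bar b$ in three steps: (a) composing $\beta$ with automorphisms of $F=Q\op A^2$ — the two free rank-one summands realize any prescribed global determinant — changes $\bar b$ by the image of $A^{*}$ in $(A/J_0)^{*}$; (b) by Lemma~\ref{5.4} (valid precisely because $F=Q\op A^2$, not merely $Q\op A$) a unit-square twist does not change the class in the Euler group; (c) Lemma~\ref{5.6} trades the residual unit for an auxiliary ideal whose localization at $f$ is the unit ideal, hence contributes nothing to $E(A_f,F_f)$. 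I expect step (c) — reconciling the local orientation furnished by the hypothesis with the one attached to $e(P,\chi)$ — to be the main obstacle: a priori these differ by an arbitrary unit of the Artinian ring $A/J_0\cong A_f/(J_0)_f$, and only unit-squares are automatically harmless, so the real content is passing from "square" to "arbitrary unit", which is exactly what Lemmas~\ref{5.4} and~\ref{5.6} must be combined to accomplish. (If one prefers to keep the connecting ideal of Lemma~\ref{3.0} and Proposition~\ref{3.1} explicit rather than suppressing it, the same bookkeeping is carried out through two applications of the subtraction principle, Theorem~\ref{3.3}.)
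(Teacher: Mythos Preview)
Your reduction to proving $e(P_f,\chi_f)=0$ in $E(A_f,F_f)$ is overshooting, and this is where the argument breaks. Take $f=1$: then $A_f=A$ and the lemma is a tautology, yet your reduction would force $e(P,\chi)=0$, i.e.\ that $P$ has a unimodular element --- precisely the conclusion of Theorem~\ref{5.9}, which genuinely needs the real-closed hypothesis. So the hypothesis of Lemma~\ref{5.8} does \emph{not} imply $e(P_f,\chi_f)=0$ in general, and your step~(c) cannot be completed as stated. Concretely, Lemma~\ref{5.6} takes as input an element of $E(A,F)$ already known to die in $E(A_f,F_f)$; the only candidates you have are $(J_0,\bar b\,w_{J_0})$ (which is $0$, so the hypothesis $\neq 0$ of \ref{5.6} fails) and $(J_0,w_{J_0})=e(P,\chi)$ (whose image is exactly what you are trying to show is zero). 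There is no way to isolate ``the effect of $\bar b$'' as a separate element of the Euler class group with controlled image.

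The paper's route (following \cite{BR}, Lemma~5.8) does not aim for $e(P_f,\chi_f)=0$ at all. Given a generic surjection ideal $I$ of $P_f$, contract it to $J=I\cap A$, an ideal of height $n$ comaximal with $f$, and let $w_J$ be the orientation corresponding to $w_I$ under $A/J\cong A_f/I$. Then $(J,w_J)$ and $e(P,\chi)$ have the \emph{same} image $e(P_f,\chi_f)$ in $E(A_f,F_f)$, so their difference lies in the kernel of localization. If the difference is zero, Corollary~\ref{4.3} gives $P\surj J$, the hypothesis gives $F\surj J$, and localizing finishes. If not, Lemma~\ref{5.6} (this is the correct use) represents the difference as $(J_2,w_{J_2})$ with $(J_2)_f=A_f$; since $V(J_2)\subset V(f)$ and $J+fA=A$, the ideals $J,J_2$ are automatically comaximal, so $e(P,\chi)=(J\cap J_2,w_\ast)$. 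By Corollary~\ref{4.3}, $J\cap J_2$ is a generic surjection ideal of $P$; by hypothesis there is a surjection $F\surj J\cap J_2$; localizing at $f$ kills $J_2$ and yields $F_f\surj J_f=I$. Note that the orientation produced on $I$ by this last surjection need not agree with $w_I$ even up to a square --- which is exactly why the argument proves the stated conclusion but not $e(P_f,\chi_f)=0$. Lemma~\ref{5.4} enters only through the proof of Lemma~\ref{5.6}.
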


Using above results and following the proof of (\cite{BR}, Theorem
5.9), the next result follows.

\begin{theorem}\label{5.9}
Let $A$ be an affine domain of dimension $n\geq 2$ over a real closed
field $k$. Let $P$ and $F=Q\op A^2$ be projective $A$-modules of rank
$n$ with $\wedge^n P\iso \wedge^{n} F$.  Assume that every generic
surjection ideal of $P$ is surjective image of $F$. Then $P$ has a
unimodular element.

In particular, if $L=\wedge^n P$ and every generic surjection ideal of
$P$ is surjective image of $L\op A^{n-1}$, then $P$ has a unimodular
element.
\end{theorem}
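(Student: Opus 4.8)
Since $\wedge^n P\iso \wedge^n F$, fix an $F$-orientation $\chi:\wedge^n F\iso \wedge^n P$; by Corollary~\ref{4.4} it suffices to prove that $e(P,\chi)=0$ in $E(A,F)$. First I would pick a generic surjection $\ga:P\surj J_0$ of $P$, so that, by the construction of the Euler class, $e(P,\chi)=(J_0,w_{J_0})$, where $w_{J_0}$ is the local $F$-orientation of $J_0$ obtained from $(\ga,\chi)$. The hypothesis says that $J_0$ is a surjective image of $F$, so a surjection $F\surj J_0$ yields a \emph{global} $F$-orientation $\wt w_{J_0}$ of $J_0$, whence $(J_0,\wt w_{J_0})=0$ in $E(A,F)$. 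Since $\dim A/J_0=0$, any two local $F$-orientations of $J_0$ differ by a unit of $A/J_0$, so $w_{J_0}=\ol b\,\wt w_{J_0}$ for some unit $\ol b\in A/J_0$ and $e(P,\chi)=(J_0,\ol b\,\wt w_{J_0})$.

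Now the arithmetic of $k$ enters. The ring $A/J_0$ is artinian and, $k$ being real closed, of characteristic $0$; hence $1+\rad(A/J_0)$ consists of squares, and therefore $(A/J_0)^*/\big((A/J_0)^*\big)^2$ is a finite product of copies of $\BZ/2\BZ$, one factor for each maximal ideal of $A$ in $V(J_0)$ whose residue field is $k$ (the ``real'' points of $V(J_0)$); at the maximal ideals of $V(J_0)$ with residue field $k(\sqrt{-1})$ every unit is already a square. Because $F=Q\op A^2$, Lemma~\ref{5.4} shows that $(J_0,\ol c\,\wt w_{J_0})$ depends only on the class of $\ol c$ modulo squares of $A/J_0$; so the moment $\ol b$ is a square we get $e(P,\chi)=(J_0,\wt w_{J_0})=0$, and then $P$ has a unimodular element by Corollary~\ref{4.4}. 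Thus everything comes down to killing the residual ``sign'' of $\ol b$ at the real points of $V(J_0)$, and this is the only point at which the real closedness of $k$ is genuinely needed.

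This last step is the main obstacle, and here I would follow the argument of (\cite{BR}, Theorem~5.9). The point is that the representative $(J_0,w_{J_0})$ of $e(P,\chi)$ is highly non-rigid: by Corollary~\ref{4.3}, \emph{any} pair $(J,w_J)$ with $(J,w_J)=e(P,\chi)$ in $E(A,F)$, $J$ of height $n$ and $J/J^2$ generated by $n$ elements, is again obtained from a generic surjection of $P$, so the hypothesis — and hence the discussion above — applies to every such representative. I would then play this against Lemma~\ref{5.5}, which lets one move the support of a representative off any prescribed closed set, and against the localization results Lemmas~\ref{5.6} and~\ref{5.8}: Lemma~\ref{5.8} guarantees that over a basic open set $\Spec A_f$ every generic surjection ideal of $P_f$ is still a surjective image of $F_f$, while Lemma~\ref{5.6} lets one re-realise a class that is already trivial over $\Spec A_f$ by a pair supported inside $\Spec A_f$. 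Assembling these exactly as in \cite{BR}, one passes to a representative $(J,w_J)$ for which the residual sign obstruction is visibly trivial — for instance because the residue fields occurring in $V(J)$ are all $k(\sqrt{-1})$ — and the vanishing of $e(P,\chi)$ follows. This bookkeeping, tracking the orientation at the real points while remaining inside the class $e(P,\chi)$, is the technical heart of the theorem and is precisely what the earlier lemmas of this section were built to supply.

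Finally, the ``in particular'' assertion is the special case $F=L\op A^{n-1}$: for $n\geq 3$ this has the required form $Q\op A^2$ with $Q=L\op A^{n-3}$ (and for $n=2$, $L\op A^{n-1}=Q\op A^2$ forces $Q=0$ and $L$ free, which is the free case), and for this $F$ the group $E(A,F)$ and the Euler class $e(P,\chi)$ coincide with the Euler class group $E(A,L)$ and the Euler class of \cite{BR}, so the displayed hypothesis is exactly the one used above.
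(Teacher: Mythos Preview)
Your proposal is correct and follows the paper's approach exactly: the paper's own proof is the single sentence ``Using above results and following the proof of (\cite{BR}, Theorem 5.9), the next result follows,'' and you have correctly identified both the reduction (via Corollary~\ref{4.4}) to the square-class obstruction at the real points of $V(J_0)$ and the lemmas (\ref{4.3}, \ref{5.4}, \ref{5.5}, \ref{5.6}, \ref{5.8}) that feed into the endgame. One slip to correct in your gloss on Lemma~\ref{5.6}: the representative $(J_2,w_{J_2})$ it produces satisfies $(J_2)_f=A_f$, i.e.\ $V(J_2)\subset V(f)$, so it is supported \emph{in $V(f)$}, not in $\Spec A_f$; this is precisely what lets one push the support of the class onto a locus with controlled residue fields.
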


%%%%%%%%%%%%%%%%%%%%%%%%%%%%%%%%%%%%%%%%%%%%%%%%%%%%%%%%%%%%%%%%%%%%%%%%%%%%%5

\section{Weak Euler Class Group}

Let $A$ be a ring of dimension $n\geq 2$ and let $F=Q\op A$ be a
projective $A$-module of rank $n$. We define the weak Euler class
group $E_0(A,F)$ of $A$ with respect to $F$ as follows:

Let $\CS$ be the set of ideals $\CN$ of $A$ such that $\CN/\CN^2$ is
generated by $n$ elements, where $\CN$ is $\CM$-primary ideal for some
maximal ideal $\CM$ of $A$ of height $n$. Let $G$ be the free abelian
group on the set $\CS$.

Let $J=\cap \CN_i$ be the intersection of finitely many ideals
$\CN_i$, where $\CN_i$ is $\CM_i$-primary and $\CM_i$'s are distinct
maximal ideals of $A$ of height $n$. Assume that $J/J^2$ is generated
by $n$ elements. We associate to $J$, the element $\sum \CN_i$ of
$G$. We denote this element by $(J)$.

Let $H$ be the subgroup of $G$ generated by elements of the type
$(J)$, where $J$ is an ideal of $A$ of height $n$ which is surjective
image of $F$.

We set $E_0(A,F)=G/H$.

Let $P$ be a projective $A$-module of rank $n$ such that $\wedge^nP
\iso \wedge^{n}F$.  Let $\gl : P\surj J_0$ be a generic surjection of $P$. We
define $e(P)=(J_0)$ in $E_0(A,F)$. We will show that this assignment
is well defined.

Let $\mu : P \surj J_1$ be another generic surjection of $P$.  By
(\ref{3.0}), there exists a surjection $\ga(T): P[T] \surj I$, where
$I$ is an ideal of $A[T]$ of height $n$ with $\ga(0)=\gl$, $I(0)=J_0$,
$\ga(1)=\mu$ and $I(1)=J_1$. Now, as before, using (\ref{3.1}), we see
that $(J_0)=(J_1)$ in $E_0(A,F)$. This shows that $e(P)$ is well
defined.

Note that there is a canonical surjection from $E(A,F)$ to $E_0(A,F)$ obtained by
forgetting the orientations.

We state the following result which follows from (\ref{5.1} and \ref{5.4}).

\begin{lemma}\label{5.1a}
Let $A$ be a ring of even dimension $n$. Let $P$ and $F=Q\op A^2$ be
projective $A$-modules of rank $n$ and let $\chi : \wedge^n F\iso
\wedge^n P$ be a $F$-orientation of $P$. Let $e(P,\chi)=(J,w_J)$ in
$E(A,F)$ and let $\wt w_J$ be another local $F$-orientation of
$J$. Then there exists a projective $A$-module $P_1$ with $[P_1]=[P]$
in $K_0(A)$ and a $F$-orientation $\chi_1$ of $P_1$ such that
$e(P_1,\chi_1)=(J,\wt w_J)$ in $E(A,F)$.
\end{lemma}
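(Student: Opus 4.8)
The plan is to deduce Lemma \ref{5.1a} from the two cited ingredients, Lemma \ref{5.1} and Lemma \ref{5.4}, by a direct bookkeeping argument on units modulo $J$. First recall that, since $w_J$ and $\wt w_J$ are both local $F$-orientations of $J$, we have $\wt w_J = \ol b\, w_J$ for some unit $\ol b \in A/J$ (this is the observation recorded at the start of Section~5). Because $n$ is even, write $n-1 = 2m+1$ for some integer $m \geq 0$; the exponent $n-1$ appearing in Lemma \ref{5.1} is odd. The idea is to first adjust by an $(n-1)$st power using Lemma \ref{5.1}, and then to absorb the remaining even-power discrepancy using Lemma \ref{5.4}, which costs nothing in $E(A,F)$.

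**Next I would** run the argument as follows. Choose $a \in A$ whose image in $A/J$ is a unit; more precisely, I want to pick $\ol a \in A/J$ so that $\ol a^{\,n-1} = \ol b$. This is the step that needs care: such an $\ol a$ need not exist for an arbitrary unit $\ol b$. To get around this, I would not insist on hitting $\ol b$ exactly on the nose, but rather combine the two moves. Concretely: by Lemma \ref{5.1}, starting from $e(P,\chi) = (J, w_J)$ and any $a, b \in A$ with $ab \equiv 1 \bmod J$, we obtain a projective module $P_1$ with $[P_1] = [P]$ in $K_0(A)$ (since $P_1$ is the kernel of a surjection $A \oplus P \surj A$, so $P_1 \oplus A \iso A \oplus P$) and an $F$-orientation $\chi_1$ with $e(P_1,\chi_1) = (J, \ol{a^{n-1}} w_J)$. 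On the other hand, by Lemma \ref{5.4} (applicable since $F = Q \op A^2$ has the required free rank $2$ summand and $n$ is even only matters through the statement, not the hypothesis here), for any unit $\ol c \in A/J$ we have $(J, \ol{c^{\,2}}\, v) = (J, v)$ in $E(A,F)$ for every local $F$-orientation $v$ of $J$. Hence $e(P_1,\chi_1) = (J, \ol{a^{n-1}} w_J) = (J, \ol{c^{2}}\, \ol{a^{n-1}} w_J)$ in $E(A,F)$ for every unit $\ol c$.

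**The key point** is then that every unit $\ol b \in A/J$ can be written as $\ol c^{\,2}\, \ol a^{\,n-1}$ for suitable units $\ol a, \ol c$: indeed, since $n-1$ is odd, we may simply take $\ol a$ with $\ol a^{\,n-1} \equiv \ol b \cdot (\ol b)^{?}$... here I must be careful. The clean way: since $n-1$ is odd, $n-1$ and $2$ are coprime, so there are integers $s, t$ with $s(n-1) + 2t = 1$; then $\ol b = \ol b^{\,s(n-1)}\, \ol b^{\,2t} = (\ol b^{\,s})^{n-1} (\ol b^{\,t})^{2}$. Take $\ol a = \ol b^{\,s}$ and $\ol c = \ol b^{\,t}$. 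Lift $\ol a$ to $a \in A$ and choose $b \in A$ with $ab \equiv 1 \bmod J$. Then Lemma \ref{5.1} produces $(P_1, \chi_1)$ with $e(P_1,\chi_1) = (J, \ol{a^{n-1}} w_J)$, and applying Lemma \ref{5.4} with the unit $\ol c$ gives $(J, \ol{a^{n-1}} w_J) = (J, \ol{c^{2}} \ol{a^{n-1}} w_J) = (J, \ol b\, w_J) = (J, \wt w_J)$ in $E(A,F)$. Since $[P_1] = [P]$ in $K_0(A)$, this is exactly the assertion of the lemma.

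**The main obstacle** I anticipate is the arithmetic bridging step — ensuring that an arbitrary unit of $A/J$ is a product of an odd power and a square of units — but this is handled cleanly by the Bézout relation $\gcd(n-1, 2) = 1$, valid precisely because $n$ is even. A secondary point to check is that Lemma \ref{5.4} is stated for $F = Q \op A^2$, which is consistent with the hypothesis here, and that Lemma \ref{5.1} was proved for $F = Q \op A$, a weaker hypothesis, so both apply. One should also verify that $\chi_1 = \delta^{-1}\chi$ in Lemma \ref{5.1} is genuinely an $F$-orientation of $P_1$ (i.e. an isomorphism $\wedge^n F \iso \wedge^n P_1$), which follows since $\delta : \wedge^n P_1 \iso \wedge^n P$ is the isomorphism of Lemma \ref{2.7}; this is already built into the statement of Lemma \ref{5.1}, so nothing new is needed.
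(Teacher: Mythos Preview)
Your proposal is correct and is precisely the argument the paper has in mind: the paper simply asserts that Lemma \ref{5.1a} ``follows from (\ref{5.1} and \ref{5.4})'' without spelling out details, and what you have written is the natural way to combine those two lemmas. The B\'ezout observation $\gcd(n-1,2)=1$ (equivalently, that $\ol b^{\,n-1}$ and $\ol b$ differ by the square $\ol b^{\,n-2}$ when $n$ is even) is exactly the arithmetic content that makes the even-dimension hypothesis relevant, and your verification that $[P_1]=[P]$ in $K_0(A)$ via $P_1\oplus A\cong A\oplus P$ is the intended one.
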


\begin{proposition}\label{6.1}
Let $A$ be a ring of even dimension $n$ and let $F=Q\op A^2$ be a
projective $A$-module of rank $n$. Let $J_1$ and $J_2$ be two
comaximal ideals of $A$ of height $n$ and let $J_3=J_1 \cap J_2$. If
any two of $J_1,J_2$ and $J_3$ are surjective images of projective
$A$-modules of rank $n$ which are stably isomorphic to $F$, then so is
the third one.
\end{proposition}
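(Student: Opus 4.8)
The plan is to reduce everything to a single structural fact about the Euler class group: the set
$\CE=\{\,e(P,\chi)\in E(A,F):P\text{ projective of rank }n,\ P\text{ stably isomorphic to }F,\ \chi\text{ an }F\text{-orientation}\,\}$
is a subgroup of $E(A,F)$. The Proposition then follows formally, once one has the dictionary: an ideal $J$ of height $n$ with $J/J^2$ generated by $n$ elements is a surjective image of a projective module stably isomorphic to $F$ if and only if $(J,w_J)=e(P,\chi)$ in $E(A,F)$ for some such $(P,\chi)$ and some (hence, by (\ref{5.1a}), every) local $F$-orientation $w_J$ of $J$. One implication is clear because a surjection onto a height $n$ ideal is a generic surjection; the other is (\ref{4.3}). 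In the three cases of the Proposition the relevant ideals have conormal module generated by $n$ elements: for $J_3=J_1\cap J_2$ read this off $A/J_3\cong A/J_1\times A/J_2$, and in the subtraction cases $J_2/J_2^2$ is a quotient of $J_3/J_3^2$ since $J_3+J_2^2=J_2(J_1+J_2)=J_2$.

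For closure of $\CE$ under addition, let $e(P_1,\chi_1)=(J_1,w_{J_1})$ and $e(P_2,\chi_2)=(J_2,w_{J_2})$ with $J_1,J_2$ comaximal (comaximality of the supports can always be arranged by re-choosing the surjections via (\ref{6.7})). Apply (\ref{6.7}) to the stably isomorphic pairs $(P_1,F)$ and $(P_2,F)$ to get ideals $K_1,K_2$ of height $n$, pairwise comaximal and comaximal with $J_1J_2$, together with surjections $\mu_i:P_i\surj K_i$ and $\nu_i:F\surj K_i$; then $e(P_i,\chi_i)=(K_i,v_i)$, where $v_i$ is the orientation from $(\mu_i,\chi_i)$, while the orientation $v_i'$ from $(\nu_i,\chi_F)$ is global. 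Since $F=(Q\op A)\op A$ has a free summand, the addition principle (\ref{addition}) glues $\nu_1,\nu_2$ to a surjection $F\surj K_1\cap K_2$ realizing $(v_1',v_2')$, so $e(F,\chi_F)=(K_1\cap K_2,(v_1',v_2'))$. Now (\ref{5.1a}), applied with the new orientation $(v_1,v_2)$ of $K_1\cap K_2$, produces $P_3$ with $[P_3]=[F]$ in $K_0(A)$ and an orientation $\chi_3$ with $e(P_3,\chi_3)=(K_1\cap K_2,(v_1,v_2))=(K_1,v_1)+(K_2,v_2)=e(P_1,\chi_1)+e(P_2,\chi_2)$, as required.

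For closure under negation, let $e(P,\chi)=(J,w_J)$, nonzero (if it is $0=e(F,\chi_F)$ by (\ref{4.4}) there is nothing to do). Use (\ref{6.7}) on $(P,F)$ to get $K$ of height $n$, comaximal with $J$, with $\mu:P\surj K$ and $\nu:F\surj K$; write $v,v'$ for the resulting orientations, so $e(P,\chi)=(K,v)$ and $(K,v')=0$. By (\ref{5.5}) choose $M$ of height $n$, comaximal with $JK$, and an orientation $w_M$ with $(K,v)+(M,w_M)=0$; then $F\surj K\cap M$ realizing $(v,w_M)$, so $e(F,\chi_F)=(K\cap M,(v,w_M))$, and (\ref{5.1a}) with the orientation $(v',w_M)$ gives $P''$ with $[P'']=[F]$ in $K_0(A)$ and $e(P'',\chi'')=(K\cap M,(v',w_M))=(K,v')+(M,w_M)=-(K,v)=-e(P,\chi)$. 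Thus $\CE$ is a subgroup. Now conclude: if $J_1,J_2$ are good then $(J_1\cap J_2,(w_{J_1},w_{J_2}))=e(P_1,\chi_1)+e(P_2,\chi_2)\in\CE$, so (\ref{4.3}) makes $J_1\cap J_2$ a surjective image of a module stably isomorphic to $F$; and if $J_1$ and $J_3=J_1\cap J_2$ are good, writing the orientation of $J_3$ as $(u_1,u_2)$ we have $(J_2,u_2)=(J_3,(u_1,u_2))-(J_1,u_1)$ with $(J_1,u_1)\in\CE$ by (\ref{5.1a}), hence $(J_2,u_2)\in\CE$ and (\ref{4.3}) gives a surjection onto $J_2$; the case with $J_2,J_3$ good is symmetric.

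The main obstacle is precisely closing $\CE$ under the group operations while staying inside the class of modules stably isomorphic to $F$: the addition principle (\ref{addition}) applies only to modules with a free summand, so $P_1$ and $P_2$ cannot be combined directly. The device used above — transporting their Euler classes, via (\ref{6.7}), onto ideals that are simultaneously surjective images of $F$, combining those over $F$ itself, and only then re-orienting with (\ref{5.1a}) — is what circumvents this. It is exactly the hypotheses that $\dim A$ is even and $F=Q\op A^2$ that make (\ref{5.1a}) available, i.e. that allow an arbitrary change of local orientation to be realized by a module stably isomorphic to $F$; without them the last step of each closure argument breaks down.
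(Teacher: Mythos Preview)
Your proof is correct and uses exactly the same ingredients as the paper's --- (\ref{6.7}) to transport Euler classes onto ideals that are simultaneously surjective images of $F$, the addition principle (\ref{addition}) applied to $F$ itself, (\ref{5.1a}) to adjust local orientations, and (\ref{4.3}) to produce the final surjection --- but you have repackaged the two cases into the single structural statement that $\CE$ is a subgroup of $E(A,F)$, whereas the paper handles the addition case $(i)$ and the subtraction case $(ii)$ by separate direct computations. The abstraction is a clean way to exhibit why both cases rest on the same mechanism (and your dictionary via (\ref{5.1a}) makes explicit the role of the hypotheses $n$ even and $F=Q\op A^2$), but the underlying steps match the paper's essentially line for line.
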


\begin{proof}
$(i)$
Let $P_1$ and $P_2$ be two projective $A$-modules of rank $n$ with
$[P_1]=[P_2]=[F]$ in $K_0(A)$ and let $\psi_1 : P_1\surj J_1$ and
$\psi_2 :P_2\surj J_2$ be two surjections.  Choose $F$-orientations
$\chi_1$ and $\chi_2$ of $P_1$ and $P_2$ respectively.  Then
$e(P_1,\chi_1)=(J_1,w_{J_1})$ and $e(P_2,\chi_2)=(J_2,w_{J_2})$ in
$E(A,F)$.

By (\ref{6.7}), there exists an ideal $J_1^\prime$ of height $n$ which
is surjective image of both $P_1$ and $F$. Hence
$e(P_1,\chi_1)=(J_1,w_{J_1})=(J_1^\prime,w_{J_1^\prime})$ in $E(A,F)$
for some local $F$-orientation $w_{J_1^\prime}$ of
$J_1^\prime$. Similarly, there exists an ideal $J_2^\prime$ of height
$n$ which is surjective image of both $P_2$ and $F$. Hence
$e(P_2,\chi_2)=(J_2,w_{J_2})=(J_2^\prime,w_{J_2^\prime})$ in $E(A,F)$
for some local $F$-orientation $w_{J_2^\prime}$ of
$J_2^\prime$. Further, we may assume that
$J_1^\prime+J_2^\prime=A$. Let
$(J_1,w_{J_1})+(J_2,w_{J_2})=(J_3,w_{J_3})$ in $E(A,F)$.

Let $J_3^\prime=J_1^\prime \cap J_2^\prime$. By addition principle
(\ref{addition}), $J_3^\prime$ is surjective image of $F$ and
$(J_1^\prime,w_{J_1^\prime})+(J_2^\prime,w_{J_2^\prime})
=(J_3^\prime,w_{J_3^\prime})$
in $E(A,F)$. Hence $(J_3^\prime,w_{J_3^\prime})=(J_3,w_{J_3})$. Since
$J_3^\prime$ is surjective image of $F$, by (\ref{5.1a}), there exists
a projective $A$-module $P_3$ with $[P_3]=[F]$ in $K_0(A)$ and a
$F$-orientation $\chi_3$ of $P_3$ such that
$e(P_3,\chi_3)=(J_3^\prime,w_{J_3^\prime})=(J_3,w_{J_3})$ in $E(A,F)$.
By (\ref{4.3}), there exists a surjection $\psi_3 : P_3\surj J_3$ such
that $(\psi_3,\chi_3)$ induces $(J_3,w_{J_3})$. This proves the first
part. \\

$(ii)$ Now assume that $J_1$ and $J_3$ are surjective images of
$P_1^\prime$ and $P_3$ respectively, where $P_1^\prime$ and $P_3$ are
projective $A$-modules of rank $n$ with $[P_1^\prime]=[P_3]=[F]$ in
$K_0(A)$.

Let $e(P_3,\chi_3)=(J_3,w_3)$ for some $F$-orientation $\chi_3$ of
$P_3$ and let $(J_3,w_3)=(J_1,w_1)+(J_2,w_2)$ in $E(A,F)$. Let
$e(P_1^\prime,\chi_1^\prime)=(J_1,w_1^\prime)$ for some
$F$-orientation $\chi_1^\prime$ of $P_1^\prime$.  By (\ref{5.1a}),
there exists a projective $A$-module $P_1$ of rank $n$ with
$[P_1]=[P_1^\prime]$ in $K_0(A)$ and a $F$-orientation $\chi_1$ of
$P_1$ such that $e(P_1,\chi_1)=(J_1,w_1)$ in $E(A,F)$.

By (\ref{6.7}), there exists an ideal $J_4$ of height $n$ which is
surjective image of $F$ and $P_1$ both and is comaximal with $J_2$
such that $e(P_1,\chi_1)=(J_1,w_1)=(J_4,w_4)$.  Write $J_5=J_4 \cap
J_2$. Assume that $(J_4,w_4) + (J_2,w_2)=(J_5,w_5)$ in $E(A,F)$. Then we have
$e(P_3,\chi_3)=(J_3,w_3)=(J_5,w_5)$ in $E(A,F)$.

Since $J_4$ is a surjective image of $F$, we get
$e(F,\chi)=(J_4,\wt w_4)=0$ for some $\chi$. If $(J_4,\wt w_4)+(J_2,w_2)=(J_5,\wt
w_5)$, then $(J_2,w_2)=(J_5,\wt w_5)$.  Since
$e(P_3,\chi_3)=(J_5,w_5)$, by (\ref{5.1a}), there exists a projective
$A$-module $\wt P_3$ of rank $n$ with $[\wt P_3]=[P_3]$ in $K_0(A)$
such that $e(\wt P_3,\wt w_3)=(J_5,\wt w_5)=(J_2,w_2)$. Hence, by
(\ref{4.3}), $J_2$ is a surjective image of $\wt P_3$ which is stably
isomorphic to $F$. This completes the proof.  $\hfill \square$
\end{proof}

\begin{proposition}\label{6.2}
Let $A$ be a ring of even dimension $n$ and let $F=Q\op A^2$ be a
projective $A$-module of rank $n$. Let $J$ be an ideal of $A$ of
height $n$. Then $(J)=0$ in $E_0(A,F)$ if and only if $J$ is a
surjective image of a projective $A$-module of rank $n$ which is
stably isomorphic to $F$.
\end{proposition}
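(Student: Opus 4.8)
The plan is to prove the two implications separately, with Proposition~\ref{6.1} as the main tool. Call a height $n$ ideal $\mathfrak a$ (with $\mathfrak a/\mathfrak a^2$ generated by $n$ elements) \emph{realizable} if it is a surjective image of a projective $A$-module of rank $n$ stably isomorphic to $F$. Note that any surjective image of $F$ is realizable (take the module to be $F$ itself), and that, by Proposition~\ref{6.1}, the class of realizable ideals is closed under forming the intersection of two comaximal members and under the reverse operation: if $\mathfrak a\cap\mathfrak b$ and $\mathfrak a$ are realizable with $\mathfrak a,\mathfrak b$ comaximal, then $\mathfrak b$ is realizable.

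For the ``if'' direction, suppose $J=\psi(P)$ for a projective $A$-module $P$ of rank $n$ with $[P]=[F]$ in $K_0(A)$. Since $P$ and $F$ are stably isomorphic, $\wedge^n P\iso\wedge^n F$, so $e(P)\in E_0(A,F)$ is defined; and since $J$ has height $n$, $\psi$ is a generic surjection, so $e(P)=(J)$ by the well-definedness of $e(P)$ established above. Apply Lemma~\ref{6.7} to the stably isomorphic pair $P,F$: there is an ideal $J'$ of height $n$ which is a surjective image of both $P$ and $F$. It gives $e(P)=(J')$ (being a generic surjective image of $P$) and $(J')=0$ in $E_0(A,F)$ (being a surjective image of $F$, hence a generator of $H$). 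Hence $(J)=e(P)=0$.

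For the ``only if'' direction, suppose $(J)=0$ in $E_0(A,F)$, so that in the free group $G$ we have a relation $(J)=\sum_{i=1}^p(L_i)-\sum_{j=1}^q(L'_j)$ in which every $L_i$ and every $L'_j$ is a surjective image of $F$. First I would run a moving argument --- using Lemma~\ref{3.0}, Lemma~\ref{6.7} and the addition principle (Theorem~\ref{addition}, applicable since $F=(Q\op A)\op A$), together with the subtraction principle to cancel primary components shared by positive and negative terms, exactly as in the proof of the corresponding statement in \cite{BR} --- to rewrite the relation in the form $(J)+(M)=(L)$ with $M$ and $L$ surjective images of $F$. Comparing primary decompositions (an honest ideal carries each primary component with multiplicity one) then forces $J$ and $M$ to be comaximal and $J\cap M=L$. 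Finally, apply Proposition~\ref{6.1} to $J_1=J$, $J_2=M$, $J_3=J\cap M=L$: two of these, namely $M$ and $L$, are realizable, and hence so is the third, $J$. The degenerate cases ($q=0$, where $J=L$, and $J=A$) are trivial.

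I expect the moving argument to be the main obstacle: it must be carried out within the class of surjective images of $F$, and the resulting primary decompositions in $G$ must be managed with care so that the rewritten relation remains valid. The even dimension of $A$ and the free summand $A^2$ of $F$ are not needed for that step as such, but are essential for the appeal to Proposition~\ref{6.1}, whose proof rests on Lemmas~\ref{5.4} and \ref{5.1a}.
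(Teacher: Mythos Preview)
Your ``if'' direction is correct and matches the paper exactly: Lemma~\ref{6.7} produces a common generic surjective image of $P$ and $F$, so $(J)=e(P)=(J')=0$ in $E_0(A,F)$.

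Your ``only if'' direction, however, has a real gap. You propose to manipulate the equation $(J)=\sum(L_i)-\sum(L'_j)$ \emph{inside the free group $G_0$} until it reads $(J)+(M)=(L)$. But neither of your two tools works at the level of $G_0$. First, moving an $L_i$ to a comaximal position via Lemma~\ref{6.7} replaces $(L_i)$ by an \emph{unequal} element of $G_0$, so the identity in $G_0$ is destroyed; only the class in $E_0(A,F)$ is preserved, and there the whole identity is $0=0$. Second, cancelling a primary component $\mathcal N$ shared by some $L_i$ and some $L'_j$ via the subtraction principle (Theorem~\ref{3.3}) requires $\mathcal N$ itself to be a surjective image of a module of the form $Q'\oplus A$, which is not given. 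So the ``moving argument'' you sketch cannot be carried out in $G_0$.

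The paper (and \cite{BR}, whose proof you invoke) avoids this by passing to the \emph{oriented} group $E(A,F)$. One chooses a local $F$-orientation $w_J$, so that $(J,w_J)$ lies in the kernel of the canonical surjection $\Phi:E(A,F)\surj E_0(A,F)$. The first half of the paper's proof shows, using Lemma~\ref{5.5} and Proposition~\ref{6.1}, that every element of $\ker\Phi$ can be represented as $\pm(\wt J,w_{\wt J})$ with $\wt J$ realizable and comaximal with any prescribed ideal. Applied to $-(J,w_J)$ this yields a realizable $K$ comaximal with $J$ satisfying $(J,w_J)+(K,w_K)=0$ in $E(A,F)$; Theorem~\ref{4.2} then makes $J\cap K$ a surjective image of $F$, and Proposition~\ref{6.1} finishes. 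The crucial point is that Lemma~\ref{5.5} produces an \emph{honest ideal} $K$ realizing $-(J,w_J)$ in $E(A,F)$, and Theorem~\ref{4.2} converts vanishing in $E(A,F)$ back into an actual surjection from $F$; neither step has an analogue in the coarser group $G_0$. Your appeal to \cite{BR} is therefore correct in spirit but not in the form you describe: the argument there also goes through the oriented Euler class group, not through the free group underlying $E_0$.
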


\begin{proof}
Let $J_1$ be an ideal of $A$ of height $n$. Assume that $J_1$ is
surjective image of a projective $A$-module of rank $n$ which is
stably isomorphic to $F$. Assume $(J_1,w_{J_1})$ is a non-zero element
of $E(A,F)$. We will show that there exist height $n$ ideals $J_2$ and
$J_3$ with local $F$-orientations $w_{J_2}$ and $w_{J_3}$ respectively
such that

$(i)$ $J_2,J_3$ are comaximal with any given ideal of height $\geq 1$,

 $(ii)$ $(J_1,w_{J_1})=-(J_2,w_{J_2})=(J_3,w_{J_3})$ in $E(A,F)$ and

 $(iii)$ $J_2,J_3$ are surjective images of projective $A$-modules of
 rank $n$ which are stably isomorphic to $F$.

By (\ref{5.5}), there exists an ideal $J_2$ of height $n$ which is
comaximal with $J_1$ and any given ideal of height $\geq 1$ such that
$(J_1,w_{J_1})+(J_2,w_{J_2})=0$ in $E(A,F)$.  By (\ref{4.2}), $J_1\cap
J_2$ is surjective image of $F$. By (\ref{6.1}), $J_2$ is a surjective
image of a projective $A$-module of rank $n$ which is stably
isomorphic to $F$.

Repeating the above with $(J_2,w_{J_2})$, we get an ideal $J_3$ of
height $n$ which is comaximal with any given ideal of height $\geq 1$
such that $(J_2,w_{J_2})+(J_3,w_{J_3})=0$ in $E(A,F)$. Further, $J_3$
is a surjective image of a projective $A$-module of rank $n$ which is
stably isomorphic to $F$. Thus, we have
$(J_1,w_{J_1})=-(J_2,w_{J_2})=(J_3,w_{J_3})$ in $E(A,F)$. This proves
the above claim.

From the above discussion, we see that given any element $h$ in kernel
 of the canonical map $\Phi :E(A,F) \surj E_0(A,F)$, there exists an
 ideal $\wt J$ of height $n$ such that $\wt J$ is surjective image of
 a projective $A$-module of rank $n$ which is stably isomorphic to $F$
 and $h=(\wt J,w_{\wt J})$ in $E(A,F)$. Moreover, $\wt J$ can be
 chosen to be comaximal with any ideal of height $\geq 1$.

Now assume $(J)=0$ in $E_0(A,F)$. Choose some local $F$-orientation
$w_J$ of $J$. Then $(J,w_J) \in ker(\Phi)$. From previous paragraph,
we get that there exists an ideal $K$ of height $n$ comaximal with $J$
such that $-(J,w_J)=(K,w_K)$ in $E(A,F)$. Further, $K$ is surjective
image of a projective $A$-module which is stably isomorphic to $F$.

Since $(J,w_J)+(K,w_K)=0$ in $E(A,F)$, by (\ref{4.2}), $J\cap K$ is
surjective image of $F$. By (\ref{6.1}), $J$ is surjective image of a
projective $A$-module of rank $n$ which is stably isomorphic to $F$.\\

Conversely, assume that $J$ is surjective image of a projective
$A$-module $P$ of rank $n$ which is stably isomorphic to $F$. Let
$\chi$ be a $F$-orientation of $P$. Then $e(P,\chi)=(J,w_J)$ in
$E(A,F)$. By (\ref{6.7}), there exists an ideal $I$ of height $n$
which is surjective image of $P$ and $F$ both. Then
$e(P,\chi)=(J,w_J)=(I,w_I)$ in $E(A,F)$.  Therefore $(J)=(I)$ in
$E_0(A,F)$ and hence $(J)=0$ in $E_0(A,F)$. This completes the proof.
$\hfill \square$
\end{proof}

\begin{proposition}\label{6.3}
Let $A$ be a ring of even dimension $n$ and let $F=Q\op A^2$ and $P$
 be projective $A$-modules of rank $n$ with $\wedge^n P\iso \wedge^{n}
 F$.  Then $e(P)=0$ in $E_0(A,F)$ if and only if $[P]=[P_1\op A]$ in
 $K_0(A)$ for some projective $A$-module $P_1$ of rank $n-1$.
\end{proposition}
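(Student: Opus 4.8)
The plan is to read off both implications from the canonical surjection $\phi\colon E(A,F)\surj E_0(A,F)$ together with Corollary \ref{4.4}, which equates the vanishing of an Euler class in $E(A,F)$ with the existence of a unimodular element. The one genuinely nontrivial point will be to transport the obstruction coming from a generic surjection ideal of $P$ onto an ideal that is honestly a surjective image of $F$; for this I will use Proposition \ref{6.2}, Lemma \ref{6.7}, and — this is where the hypothesis that $n$ is even enters — Lemma \ref{5.1a}, which lets one adjust orientations at the cost of moving $P$ within its stable isomorphism class.

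For the implication $[P]=[P_1\oplus A]\Rightarrow e(P)=0$, set $M=P_1\oplus A$. Then $M$ and $P$ are stably isomorphic projective modules of rank $n$, so $\wedge^nM\cong\wedge^nP\cong\wedge^nF$ and $e(M)$ is defined in $E_0(A,F)$. By Lemma \ref{6.7} there is an ideal $J$ of height $n$ which is a surjective image of both $P$ and $M$; since $e$ is well defined, $e(P)=(J)=e(M)$ in $E_0(A,F)$. As $M=P_1\oplus A$ has a unimodular element, Corollary \ref{4.4} gives $e(M,\chi_M)=0$ in $E(A,F)$ for any $F$-orientation $\chi_M$ of $M$; since the image of $e(M,\chi_M)$ under $\phi$ is $e(M)$, we conclude $e(M)=0$, hence $e(P)=0$ in $E_0(A,F)$.

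Conversely, assume $e(P)=0$ in $E_0(A,F)$. Fix an $F$-orientation $\chi$ of $P$ and a generic surjection $P\surj J_0$, so that $e(P,\chi)=(J_0,w_{J_0})$ and $(J_0)=e(P)=0$ in $E_0(A,F)$. By Proposition \ref{6.2}, $J_0$ is a surjective image of a projective module $P'$ of rank $n$ with $[P']=[F]$; since $\wedge^nP'\cong\wedge^nF$, a surjection $P'\surj J_0$ gives $e(P',\chi')=(J_0,w'_{J_0})$ for a suitable $\chi'$, and $w_{J_0}$ and $w'_{J_0}$ differ by a unit of $A/J_0$. As $n$ is even, Lemma \ref{5.1a} applied to $(P',\chi')$ yields a projective module $P''$ with $[P'']=[P']=[F]$ and an $F$-orientation $\chi''$ such that $e(P'',\chi'')=(J_0,w_{J_0})=e(P,\chi)$ in $E(A,F)$. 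Now $P''$ and $F$ are stably isomorphic of rank $n$, so Lemma \ref{6.7} gives an ideal $J_1$ of height $n$ that is a surjective image of both; by well-definedness of $e$, $e(P,\chi)=e(P'',\chi'')=(J_1,w^{(1)})$, where $w^{(1)}$ is induced by a surjection $P''\surj J_1$, whereas a surjection $F\surj J_1$ exhibits a global $F$-orientation $w^{(F)}$ of $J_1$, so that $(J_1,w^{(F)})=0$ in $E(A,F)$. A second application of Lemma \ref{5.1a}, now to $(P,\chi)$ with the local orientation $w^{(F)}$ of $J_1$, produces a projective module $P_3$ with $[P_3]=[P]$ and an $F$-orientation $\chi_3$ with $e(P_3,\chi_3)=(J_1,w^{(F)})=0$ in $E(A,F)$. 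By Corollary \ref{4.4}, $P_3$ has a unimodular element, hence $P_3\cong P_4\oplus A$ with $P_4$ projective of rank $n-1$, and therefore $[P]=[P_3]=[P_4\oplus A]$.

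The steps that require care rather than ideas are the determinant identifications that make each Euler class well defined and the fact that any two local $F$-orientations of a fixed ideal differ by a unit. The essential point, and the only use of the evenness of $n$, is the double application of Lemma \ref{5.1a}: combined with Proposition \ref{6.2} and Lemma \ref{6.7} it lets one move $e(P,\chi)$, after replacing $P$ by a stably isomorphic module, onto an ideal that is an actual surjective image of $F$ and hence carries a global orientation, where the class must vanish. I expect the bookkeeping of which orientation sits on which ideal, and checking that Lemma \ref{5.1a} genuinely applies at each stage, to be the most delicate part of a complete write-up.
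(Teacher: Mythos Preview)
Your proof is correct and follows essentially the same route as the paper: both directions rest on Lemma~\ref{6.7}, Proposition~\ref{6.2}, Corollary~\ref{4.4}, and (in the converse) two applications of Lemma~\ref{5.1a}. The only cosmetic difference is that your first application of Lemma~\ref{5.1a} adjusts the auxiliary module $P'$ (stably isomorphic to $F$) to match $P$'s orientation on $J_0$, whereas the paper instead adjusts $P$ to match the auxiliary module's orientation; the effect is identical.
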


\begin{proof}
Assume that $[P]=[P_1\op A]$ in $K_0(A)$. By (\ref{6.7}), there exists
an ideal $J$ of $A$ of height $ n$ which is surjective image of both
$P$ and $P_1\op A$. Hence $e(P_1\op A,\chi)=(J,w_J)=0 $ in $E(A,F)$,
by (\ref{4.4}). Hence $J$ is surjective image of $F$. By (\ref{6.2}),
$e(P)=(J)=0$ in $E_0(A,F)$.

Conversely, assume that $e(P)=0$ in $E_0(A,F)$. Let $\psi: P\surj J$
be a generic surjection of $P$ and let $e(P,\chi)=(J,w_J)$ in $E(A,F)$ for
some $F$-orientation $\chi$ of $P$. Since $e(P)=(J)=0$ in $E_0(A,F)$,
by (\ref{6.2}), $J$ is surjective image of a projective $A$-module
$P_1$ with $[P_1]=[F]$ in $K_0(A)$. By (\ref{6.7}), there exists an
height $n$ ideal $J_1$ which is surjective image of $P_1$ and $F$
both. Let $e(P_1,\chi_1)=(J,\wt w_J)=(J_1,w_{J_1})$ for some
$F$-orientation $\chi_1$ of $P_1$.

By (\ref{5.1a}), there exists a rank $n$ projective $A$-module $P_2$
with $[P_2]=[P]$ in $K_0(A)$ and a $F$-orientation $\chi_2$ of $P_2$
such that $e(P_2,\chi_2)=(J,\wt w_J)=(J_1,w_{J_1})$ in $E(A,F)$.
Since $J_1$ is a surjective image of $F$, $(J_1,\wt w_{J_1})=0$ in
$E(A,F)$ for some local $F$-orientation $\wt w_{J_1}$ of $J_1$. By
(\ref{5.1a}), there exists a projective $A$-module $P_3$ with
$[P_3]=[P_2]$ in $K_0(A)$ and a $F$-orientation $\chi_3$ of $P_3$ such
that $e(P_3,\chi_3)=(J_1,\wt w_{J_1})=0$ in $E(A,F)$. Hence
$P_3=P_4\op A$, by (\ref{4.4}).  Therefore $[P]=[P_2]=[P_4\op A]$ in
$K_0(A)$. This completes the proof.  $\hfill \square$
\end{proof}

\begin{proposition}\label{6.4}
Let $A$ be a ring of even dimension $n$. Let $P$ and $F=Q\op A^2$ be
projective $A$-modules of rank $n$ with $\wedge^n P\iso \wedge^{n}
F$. Suppose that $e(P)=(J)$ in $E_0(A,F)$, where $J$ is an ideal of
$A$ of height $n$. Then there exists a projective $A$-module $P_1$ of
rank $n$ such that $[P]=[P_1]$ in $K_0(A)$ and $J$ is a surjective
image of $P_1$.
\end{proposition}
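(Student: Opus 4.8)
The plan is to bring $J$ into the orientation picture. I will show that, after replacing $P$ by a module $P_1$ with $[P_1]=[P]$ in $K_0(A)$, an ideal of the form $J\cap I$ — with $I$ a height $n$ ideal, comaximal with $J$, that is a surjective image of $F$ — carries a local $F$-orientation $w^{*}$ with $e(P_1,\chi_1)=(J\cap I,w^{*})$; then the subtraction principle, Theorem~\ref{3.3}, will peel off $I$ and leave a surjection $P_1\surj J$, which is exactly what is wanted.

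First I would fix a $F$-orientation $\chi$ of $P$ (possible since $\wedge^nP\iso\wedge^nF$) and a generic surjection $P\surj J_0$, so that $e(P,\chi)=(J_0,w_{J_0})$ in $E(A,F)$ and $e(P)=(J_0)$ in $E_0(A,F)$; by hypothesis $(J_0)=(J)$ in $E_0(A,F)$, so $J/J^2$ is generated by $n$ elements. Choosing any local $F$-orientation $w_J$ of $J$, the elements $(J,w_J)$ and $e(P,\chi)$ both have image $(J)$ under $\Phi:E(A,F)\surj E_0(A,F)$, whence $h:=e(P,\chi)-(J,w_J)\in\ker\Phi$. By the argument in the proof of Proposition~\ref{6.2}, $h=(I_0,w_{I_0})$ in $E(A,F)$ for some height $n$ ideal $I_0$, comaximal with $J$, which is a surjective image of a projective module stably isomorphic to $F$. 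Giving that module a $F$-orientation and applying Lemma~\ref{5.1a} — here is where $n$ even and $F=Q\op A^2$ are used — I would pass to a projective module $P''$ with $[P'']=[F]$ in $K_0(A)$ and a $F$-orientation $\chi''$ with $e(P'',\chi'')=(I_0,w_{I_0})$; then by Lemma~\ref{6.7} applied to $P''$ and $F$, together with the well-definedness of the Euler class, I may assume $I_0$ (henceforth renamed $I$) is a surjective image of both $P''$ and $F$ and is still comaximal with $J$. This gives $e(P,\chi)=(J,w_J)+(I,w_I)=(J\cap I,\ol w)$ in $E(A,F)$, where $\ol w$ restricts to $w_J$ on $J$ and to $w_I$ on $I$; as in the earlier proofs one uses that $(J\cap I)/(J\cap I)^2$ is generated by $n$ elements (because $J/J^2$ and $I/I^2$ are and $J+I=A$) and that orientations on comaximal ideals add.

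Next I would pick a surjection $\beta:F\surj I$, let $w_I^{F}$ be the local $F$-orientation it induces on $I$, and let $w^{*}$ be the local $F$-orientation of $J\cap I$ restricting to $w_J$ on $J$ and to $w_I^{F}$ on $I$. Applying Lemma~\ref{5.1a} once more to $e(P,\chi)=(J\cap I,\ol w)$ gives a projective module $P_1$ with $[P_1]=[P]$ in $K_0(A)$ and a $F$-orientation $\chi_1$ with $e(P_1,\chi_1)=(J\cap I,w^{*})$, and Corollary~\ref{4.3} then produces a surjection $\alpha:P_1\surj J\cap I$ from which $(J\cap I,w^{*})$ is obtained. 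Letting ``bar'' denote reduction modulo $I$, this last fact yields an isomorphism $\ol\gamma:F/IF\iso P_1/IP_1$ with $\wedge^n\ol\gamma=\ol{\chi_1}$ and $[\ol\alpha\,\ol\gamma]=w_I^{F}=[\ol\beta]$; hence $\ol\alpha\,\ol\gamma\,\sigma=\ol\beta$ for some $\sigma\in\SL(F/IF)$, and $\delta:=\sigma^{-1}\ol\gamma^{-1}$ satisfies $\ol\beta\,\delta=\ol\alpha$ and $\wedge^n\delta=\ol{\chi_1^{-1}}$. This is precisely the hypothesis required to apply Theorem~\ref{3.3} with $(P_1,F,J,I)$ in the roles of $(P,P',J,J')$, with $\chi_1^{-1}$ in the role of $\chi$ (note $F=(Q\op A)\op A$ carries the free summand demanded there), and with the surjections $\alpha$ and $\beta$; Theorem~\ref{3.3} then delivers a surjection $P_1\surj J$, and $[P_1]=[P]$ finishes the proof.

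The step I expect to be the main obstacle is verifying that hypothesis of Theorem~\ref{3.3}. The local $F$-orientations induced on $I$ by $\alpha$ (coming from $P_1$) and by $\beta$ (coming from $F$) differ a priori by a unit of $A/I$ that need neither lift to a unit of $A$ nor be a square, so they cannot be matched directly. The way around this is Lemma~\ref{5.1a} — available only because $n$ is even and $F$ has a free rank $2$ summand — which lets me pre-adjust the orientation $w^{*}$ on $J\cap I$ so that it restricts to $[\ol\beta]$ on $I$ exactly, at the price of moving $P$ within its class in $K_0(A)$, which is all the statement permits. The remaining ingredients — the Chinese-Remainder identities in $E(A,F)$, the count of generators, and well-definedness of $e(\cdot)$ — are routine and appear repeatedly in the preceding proofs.
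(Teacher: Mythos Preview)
Your proof is correct and follows essentially the same strategy as the paper's: write $e(P,\chi)$ as $(J,w_J)$ plus a term supported on an ideal that can be arranged to be a surjective image of $F$, use Lemma~\ref{5.1a} to realign orientations within the stable class of $P$, and then invoke Corollary~\ref{4.3}. The paper differs only in two minor respects: it produces the auxiliary ideal concretely by lifting a surjection $P/JP\surj J/J^2$ to $P\surj J\cap J_1$ via \cite[Corollary~2.14]{BR} (rather than invoking the $\ker\Phi$ description from the proof of Proposition~\ref{6.2}), and at the end it simply observes that $(J\cap I,w^{*})=(J,w_J)+(I,w_I^{F})=(J,w_J)$ in $E(A,F)$ since $w_I^{F}$ is global, so Corollary~\ref{4.3} already yields $P_1\surj J$ directly --- your explicit appeal to Theorem~\ref{3.3} is valid but unnecessary.
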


\begin{proof}
Since $P/JP$ is free and $J/J^2$ is generated by $n$ elements, we get
a surjection $\ol \psi : P/JP \surj J/J^2$. By (\cite{BR}, Corollary
2.14), we can lift $\ol \psi$ to a surjection $\psi: P \surj J\cap
J_1$, where $J_1$ is an height $n$ ideal comaximal with $J$. Let
$e(P,\chi)=(J,w_J) +(J_1,w_{J_1})$ in $E(A,F)$ for some
$F$-orientation $\chi$ of $P$.

Since $e(P)=(J)=(J\cap J_1) $ in $E_0(A,F)$, $(J_1)=0$ in
$E_0(A,F)$. By (\ref{6.2}), $J_1$ is surjective image of a projective
$A$-module $P_2$ of rank $n$ which is stably isomorphic to $F$. By
(\ref{5.1a}), there exists rank $n$ projective $A$-module $P_3$ with
$[P_2]=[P_3]$ in $K_0(A)$ and a $F$-orientation $\chi_3$ of $P_3$ such
that $e(P_3,\chi_3)=(J_1,w_{J_1})$ in $E(A,F)$.

By (\ref{6.7}), there exists an ideal $J_2$ of height $n$ which is
comaximal with $J$ and is surjective image of $F$ and $P_3$
both. Assume that $e(P_3,\chi_3)=(J_1,w_{J_1})=(J_2,w_{J_2})$ in
$E(A,F)$. Hence $e(P,\chi)=(J,w_J)+(J_2,w_{J_2})=(J\cap J_2,w_{J\cap
J_2})$.  By (\ref{4.3}), there exists a surjection $\phi : P \surj
J\cap J_2$. Since $J_2$ is a surjective image of $F$, we get $(J_2,\wt
w_{J_2})=0$ for some local $F$-orientation $\wt w_{J_2}$ of $J_2$. Let
$(J,w_J)+(J_2,\wt w_{J_2})=(J\cap J_2,\wt w_{J \cap J_2})$. By
(\ref{5.1}), there exists rank $n$ projective $A$-module $P_1$ with
$[P]=[P_1]$ in $K_0(A)$ and $e(P_1,\chi_1)=(J\cap J_2,\wt w_{J\cap
J_2})=(J,w_J)$ in $E(A,F)$ for some $F$-orientation $\chi_1$ of $P_1$. By
(\ref{4.3}), there exists a surjection $\ga : P_1 \surj J$. This
proves the result.  $\hfill \square$
\end{proof}

The proof of the following result is similar to (\cite{BR},
Proposition 6.5), hence we omit it.

\begin{proposition}
Let $A$ be a ring of even dimension $n$ and let $J$ be an ideal of $A$
of height $n$ such that $J/J^2$ is generated by $n$ elements. Let
$F=Q\op A^2$ be a projective $A$-module of rank $n$ and let $\wt w_J:
F/JF \surj J/J^2$ be a surjection. Suppose that the element $(J,\wt
w_J)$ of $E(A,F)$ belongs to the kernel of the canonical homomorphism
$E(A,F)\surj E_0(A,F)$. Then there exists a projective $A$-module
$P_1$ of rank $n$ such that $[P_1]=[F]$ in $K_0(A)$ and
$e(P_1,\chi_1)=(J,\wt w_J)$ in $E(A,F)$ for some $F$-orientation
$\chi_1$ of $P_1$.
\end{proposition}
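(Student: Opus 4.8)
The plan is to exploit the description of $\ker(\Phi)$, $\Phi : E(A,F)\surj E_0(A,F)$, that is already extracted inside the proof of Proposition~\ref{6.2}. There it is shown that every element $h$ of $\ker(\Phi)$ can be written as $h=(\wt J,w_{\wt J})$ in $E(A,F)$ for some ideal $\wt J$ of $A$ of height $n$ (which, moreover, may be chosen comaximal with any prescribed ideal of height $\geq 1$) such that $\wt J$ is a surjective image of a projective $A$-module $P'$ of rank $n$ which is stably isomorphic to $F$. Applying this to $h=(J,\wt w_J)$, which lies in $\ker(\Phi)$ by hypothesis, I obtain such $\wt J$, $P'$ and $w_{\wt J}$ with $(J,\wt w_J)=(\wt J,w_{\wt J})$ in $E(A,F)$.

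Next I would turn the bare surjection $P'\surj\wt J$ into an Euler class and then correct its orientation. Since $P'$ is stably isomorphic to $F$, their determinants agree, so $\wedge^n F\iso\wedge^n P'$ and $P'$ carries an $F$-orientation $\chi'$. The fixed surjection $P'\surj\wt J$ is a generic surjection (its image has height $n=\dim A$), so the pair $(P',\chi')$ together with this surjection yields $e(P',\chi')=(\wt J,w'_{\wt J})$ in $E(A,F)$ for some local $F$-orientation $w'_{\wt J}$ of $\wt J$. This $w'_{\wt J}$ need not coincide with $w_{\wt J}$, but here $n$ is even and $F=Q\op A^2$ has a free rank-$2$ summand, so Lemma~\ref{5.1a} applies: it produces a projective $A$-module $P_1$ of rank $n$ with $[P_1]=[P']=[F]$ in $K_0(A)$ and an $F$-orientation $\chi_1$ of $P_1$ with $e(P_1,\chi_1)=(\wt J,w_{\wt J})$ in $E(A,F)$. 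Chaining $e(P_1,\chi_1)=(\wt J,w_{\wt J})=(J,\wt w_J)$ completes the argument; this is exactly the pattern of the proof of (\cite{BR}, Proposition~6.5).

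The only delicate point is checking that Lemma~\ref{5.1a} is legitimately applicable, namely that one starts with a module already equipped with an $F$-orientation whose Euler class is of the form $(\wt J,\cdot)$ for the very ideal $\wt J$ whose orientation is to be modified. Both requirements are met automatically: the determinant of $P'$ is a stable invariant (so the $F$-orientation $\chi'$ exists), and $\wt J$ is literally a height-$n$ surjective image of $P'$ (so $(\wt J,w'_{\wt J})$ really is $e(P',\chi')$). The even-dimension hypothesis and the extra free summand of $F$ enter exactly once, in order to have Lemma~\ref{5.1a} (hence Lemma~\ref{5.4}) available; no further ingredient is needed, and the comaximality clause from Proposition~\ref{6.2} is not even used here.
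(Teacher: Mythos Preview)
Your argument is correct and is precisely the route the paper has in mind: it omits the proof and refers to \cite{BR}, Proposition~6.5, whose transplant to the present setting is exactly what you wrote---invoke the description of $\ker(E(A,F)\to E_0(A,F))$ established inside the proof of Proposition~\ref{6.2}, realize $(J,\wt w_J)$ as $(\wt J,w_{\wt J})$ with $\wt J$ a generic surjection ideal of some $P'$ stably isomorphic to $F$, and then use Lemma~\ref{5.1a} to pass from $e(P',\chi')=(\wt J,w'_{\wt J})$ to the desired orientation.
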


\section{Application}

Let $A$ be a ring of dimension $n\geq 2$ and let $L$ be a projective
 $A$-module of rank $1$.  We will define a map $\Delta$ from $E(A,L)$
 (defined by Bhatwadekar and Raja Sridharan \cite{BR} and is same as
 $E(A,L\op A^{n-1})$)
 to $E(A,F)$, where $F=Q\op A$ is a projective $A$-module of rank $n$
 with determinant $L$. Let $w_J : L/JL\op (A/J)^{n-1} \surj J/J^2$ be
 a surjection. Since $\dim A/J=0$, $Q/JQ$ is isomorphic to $L/JL \op
 (A/J)^{n-2}$. Choose an isomorphism $\theta : Q/JQ \iso L/JL \op
 (A/J)^{n-2}$ of determinant one.  Let $\wt w_J=w_J \circ (\theta,id)
 : Q/JQ \op A/J \surj J/J^2$ be the surjection.

Assume that $w_J$ can be lifted to a surjection $\Phi: L\op A^{n-1}
\surj J$. Write $\Phi=(\Phi_1,a)$. We may assume that $\Phi_1(L\op
A^{n-2})=K$ is an ideal of height $n-1$. Further, we may assume that
the isomorphism $\theta: Q/JQ \iso L/JL \op (A/J)^{n-2}$ is induced
from an isomorphism $\theta^\prime : Q/KQ \iso L/KL\op (A/K)^{n-2}$
(i.e.  $\theta^\prime \ot A/J=\theta$).

Let $(\Phi_2,a):Q\op A \ra J=(K,a)$ be a lift of $\wt w_J$. Then
$\Phi_2 \ot A/K : Q/KQ \surj K/K^2$ is a surjection. Let $\phi_2 :
Q\ra K$ be a lift of $\Phi_2\ot A/K$. Then $\phi_2(Q)+K^2=K$. Hence,
there exists $e\in K^2$ with $e(1-e)\in \phi_2(Q)$ such that
$\phi_2(Q)+Ae=K$. Now it is easy to check that $\phi_2(Q)+Aa=
\phi_2(Q) + (e+(1-e)a)A=K+Aa=J$ and $(\phi_2,e+(1-e)a):Q\op A\surj J$
is a lift of $\wt w_J$.

Hence, we have shown that if $w_J$ can be lifted to a surjection from
$L\op A^{n-1} \surj J$, then $\wt w_J$ can be lifted to a surjection
from $Q\op A$ to $J$. Further, if we choose different isomorphism
$\theta_1 : Q/JQ\op A/J \iso L/JL \op (A/J)^{n-1}$ of determinant one
and $w_1=w_J\circ \theta_1 : Q/JQ\op A/J \surj J/J^2$, then $\wt w_J$
and $w_1$ are connected by an element of $EL(Q/JQ\op A/J)$. Hence, if
we define $\Delta : E(A,L)\ra E(A,F)$ by $\Delta(w_J)=\wt w_J$, then
this map is well defined. It is easy to see that $\Delta$ is a group
homomorphism.

Similarly, we can define a map $\Delta_1 : E(A,F)\ra E(A,L)$ and it is
easy to show that $\Delta\circ \Delta_1=id$ and $\Delta_1\circ \Delta
=id$. Hence, we get the following interesting result:

\begin{theorem}
 Let $A$ be a ring of dimension $n\geq 2$. Let $L$ and $F=Q\op A$ be
 projective $A$-modules of rank $1$ and $n$ respectively with
 $\wedge^n F\iso L$. Then $E(A,L)$ is isomorphic to $E(A,F)$.
\end{theorem}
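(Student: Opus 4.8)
The plan is to construct the two maps $\Delta\colon E(A,L)\ra E(A,F)$ and $\Delta_1\colon E(A,F)\ra E(A,L)$ explicitly at the level of generators and to verify that they are mutually inverse group homomorphisms. Recall that by definition $E(A,L)=E(A,L\op A^{n-1})$, so a generator of $E(A,L)$ is a pair $(J,w_J)$ with $J$ an ideal of height $n$ whose conormal module $J/J^2$ is $n$-generated and $w_J\colon L/JL\op(A/J)^{n-1}\surj J/J^2$ a local orientation; a generator of $E(A,F)$ is a pair $(J,\wt w_J)$ with $\wt w_J\colon Q/JQ\op A/J\surj J/J^2$. The key point is that since $\dim A/J=0$ and $\wedge^n(Q/JQ\op A/J)\iso L/JL\op(A/J)^{n-1}$ (both having determinant $L/JL$), the module $Q/JQ$ is free of the form $L/JL\op(A/J)^{n-2}$; fixing an isomorphism $\theta\colon Q/JQ\iso L/JL\op(A/J)^{n-2}$ of determinant one lets us transport $w_J$ to $\wt w_J=w_J\circ(\theta\op\mathrm{id})$. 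I would set $\Delta(J,w_J)=(J,\wt w_J)$ and, symmetrically, $\Delta_1(J,\wt w_J)=(J,w_J)$ using $\theta^{-1}$.

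The first substantive step is \emph{well-definedness} of $\Delta$: different choices of the determinant-one isomorphism $\theta$ differ by an element of $\SL_{A/J}(Q/JQ)=EL(Q/JQ)$ (as $\dim A/J=0$), so the resulting surjections $Q/JQ\op A/J\surj J/J^2$ differ by an element of $\SL_{A/J}(Q/JQ\op A/J)$, hence define the same class in $E(A,F)$. One must also check independence of the equivalence class representative $w_J$, which is immediate since the whole construction is compatible with precomposition by $\SL_{A/J}$.

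The crucial step — and the one carried out in detail in the excerpt preceding the theorem — is that $\Delta$ descends to the quotient $G/H$, i.e.\ it sends global $L$-orientations to global $F$-orientations. Given a lift $\Phi=(\Phi_1,a)\colon L\op A^{n-1}\surj J$ of $w_J$, one arranges (via the usual general-position/moving lemma arguments, replacing $\Phi_1$ by an equivalent map) that $K:=\Phi_1(L\op A^{n-2})$ is an ideal of height $n-1$ and that the chosen $\theta$ over $A/J$ is the reduction of an isomorphism $\theta'\colon Q/KQ\iso L/KL\op(A/K)^{n-2}$. Then, writing a lift $(\Phi_2,a)\colon Q\op A\ra (K,a)=J$ of $\wt w_J$, one shows $\Phi_2(Q)+K^2=K$, picks $e\in K^2$ with $e(1-e)\in\Phi_2(Q)$ and $\Phi_2(Q)+Ae=K$, and checks that $(\phi_2,e+(1-e)a)\colon Q\op A\surj J$ is a genuine surjective lift of $\wt w_J$ (using $K+Aa=J$). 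Thus $\Delta$ is a well-defined homomorphism, and the same argument applied with $\theta^{-1}$ (and with the roles of $F$ and $L\op A^{n-1}$ interchanged, which is legitimate since both are rank-$n$ projectives of the form ``$Q'\op A$'' with determinant $L$) yields $\Delta_1$.

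Finally, $\Delta_1\circ\Delta=\mathrm{id}$ and $\Delta\circ\Delta_1=\mathrm{id}$ follow at the level of generators: applying $\theta$ and then $\theta^{-1}$ returns $w_J$ up to an element of $EL$, hence the same class, and the two maps being group homomorphisms on the free groups $G$ they induce inverse isomorphisms on $G/H$. The main obstacle is entirely contained in the descent-to-$H$ step: one needs the general-position manipulations that put $K=\Phi_1(L\op A^{n-2})$ in height $n-1$ and make $\theta$ lift to $\theta'$ over $A/K$, together with the explicit patching with the idempotent-like element $e\in K^2$ that converts a lift over $A/K$ into an honest surjection onto $J=(K,a)$; everything else (well-definedness under $\SL$, homomorphism property, the two composites being the identity) is formal.
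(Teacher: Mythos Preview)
Your proposal is correct and follows essentially the same approach as the paper: construct $\Delta$ by transporting $w_J$ along a determinant-one isomorphism $\theta\colon Q/JQ\iso L/JL\op(A/J)^{n-2}$, verify well-definedness via $\SL_{A/J}=EL$, and establish descent to $H$ by arranging $K=\Phi_1(L\op A^{n-2})$ of height $n-1$, lifting $\theta$ to $\theta'$ over $A/K$, and using the idempotent trick with $e\in K^2$ to produce the surjection $(\phi_2,e+(1-e)a)\colon Q\op A\surj J$. The reverse map $\Delta_1$ and the verification that the composites are the identity are handled exactly as in the paper.
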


Let $J$ be an ideal of $A$ of height $n$ such that $J/J^2$ is
generated by $n$ elements. Further assume that there exists a
surjection $\ga : L\op A^{n-1} \surj J$. We will show that $J$ is also
a surjective image of $F=Q\op A$. Let $w_J$ be the local
$L$-orientation of $J$ induced from $\ga$. Then $(J,w_J)=0$ in
$E(A,L)$. Hence $\Delta(J,w_J)=(J,\wt w_J)=0$ in $E(A,F)$. Hence, by
(\ref{4.2}), $J$ is a surjective image of $F$.

We define the map $\wt \Delta : E_0(A,L) \ra E_0(A,F)$ by $(J) \mapsto
(J)$. The above discussion shows that $\wt \Delta$ is well
defined. Similarly, we can define a map $\wt \Delta_1 : E_0(A,F)\ra
E_0(A,L)$ such that $\wt \Delta \circ \wt \Delta_1 =id$ and $\wt
\Delta_1\circ \wt \Delta =id$. Thus we get the following interesting
result:
 
 \begin{theorem}
Let $A$ be a ring of dimension $n\geq 2$. Let $L$ and $F=Q\op A$ be
projective $A$-modules of rank $1$ and $n$ respectively with $\wedge^n
F\iso L$. Then $E_0(A,L)$ is isomorphic to $E_0(A,F)$.
\end{theorem}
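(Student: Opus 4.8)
The plan is to mirror the construction of $\Delta : E(A,L)\to E(A,F)$ from the previous theorem, but now on the level of the weak Euler class groups, where orientations are dropped. Recall that $E_0(A,L)$ and $E_0(A,F)$ are both defined as $G/H$, where $G$ is the free abelian group on the set $\CS$ of ideals $\CN$ that are $\CM$-primary (for a height-$n$ maximal ideal $\CM$) with $\CN/\CN^2$ generated by $n$ elements, and $H$ is the subgroup generated by the $(J)$ with $J$ a height-$n$ ideal that is a surjective image of $L\op A^{n-1}$ (resp.\ of $F$). Since the generating set $\CS$ of $G$ is literally the same for both groups (it does not involve $L$ or $F$ at all), the identity map on $G$ is a homomorphism; the only thing to check is that it carries $H_L$ into $H_F$ and conversely, so that it descends to mutually inverse isomorphisms $\wt\Delta$ and $\wt\Delta_1$.

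First I would establish that $\wt\Delta : E_0(A,L)\to E_0(A,F)$, $(J)\mapsto (J)$, is well defined, i.e.\ $H_L\subseteq H_F$. This is exactly the content of the paragraph preceding the statement: if $J$ is a height-$n$ ideal with $J/J^2$ generated by $n$ elements and admitting a surjection $\ga : L\op A^{n-1}\surj J$, pick the local $L$-orientation $w_J$ induced by $\ga$; then $(J,w_J)=0$ in $E(A,L)$, so $\Delta(J,w_J)=(J,\wt w_J)=0$ in $E(A,F)$, and by Theorem~\ref{4.2} the surjection $\wt w_J$ lifts, so $J$ is a surjective image of $F$, i.e.\ $(J)\in H_F$. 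Since $H_L$ is generated by such $(J)$ (for ideals $J$ that may be products of primary components, but then the surjective-image property passes to the whole $J$ and the relation $(J)=\sum(\CN_i)$ is the same in both groups), this shows $H_L\subseteq H_F$ and hence $\wt\Delta$ descends. Then I would construct $\wt\Delta_1 : E_0(A,F)\to E_0(A,L)$ in the symmetric way: given a height-$n$ ideal $J$ with $J/J^2$ $n$-generated and a surjection $\gb : F=Q\op A\surj J$, the induced local $F$-orientation $w_J$ gives $(J,w_J)=0$ in $E(A,F)$, so $\Delta_1(J,w_J)=0$ in $E(A,L)$, and Theorem~\ref{4.2} (applied with $L\op A^{n-1}$ in place of $F$, which is legitimate since $E(A,L)=E(A,L\op A^{n-1})$) shows $J$ is a surjective image of $L\op A^{n-1}$, i.e.\ $(J)\in H_L$. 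Hence $H_F\subseteq H_L$ and $\wt\Delta_1$ descends.

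Finally, since both $\wt\Delta$ and $\wt\Delta_1$ are induced by the identity map on the common free group $G$, their composites in either order are the identity on the respective quotients; this gives $\wt\Delta\circ\wt\Delta_1 = \mathrm{id}$ and $\wt\Delta_1\circ\wt\Delta = \mathrm{id}$, so $\wt\Delta$ is an isomorphism $E_0(A,L)\iso E_0(A,F)$, which is the assertion.

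The only genuinely substantive point — the rest being bookkeeping — is the two-sided inclusion $H_L = H_F$ as subgroups of $G$, and this in turn rests entirely on the interplay between $\Delta$ (respectively $\Delta_1$) on the full Euler class groups and the lifting criterion of Theorem~\ref{4.2}. Since $\Delta$ and $\Delta_1$ were already constructed in the previous theorem and shown to be mutually inverse homomorphisms, and since Theorem~\ref{4.2} is available, I do not expect any real obstacle; the main care needed is to make sure that when $J$ is not primary one correctly reduces the statement ``$(J)\in H$'' to the surjective-image property of $J$ itself, which follows because a surjection onto $J$ restricts to the correct local orientations on each primary component $\CN_i$ and $(J)=\sum(\CN_i)$ by definition of the weak Euler class group.
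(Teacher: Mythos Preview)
Your proposal is correct and follows essentially the same approach as the paper: define $\wt\Delta$ and $\wt\Delta_1$ by $(J)\mapsto(J)$, reduce well-definedness to showing that a height-$n$ ideal $J$ which is a surjective image of $L\op A^{n-1}$ is also a surjective image of $F$ (and conversely), and deduce this from the isomorphism $\Delta:E(A,L)\iso E(A,F)$ together with Theorem~\ref{4.2}. Your observation that the free group $G$ is literally the same for both weak Euler class groups, so that the problem reduces to the equality $H_L=H_F$, is exactly the point, and makes the argument slightly more transparent than the paper's brief sketch.
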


Since, by (\cite{BR}, 6.8), $E_0(A,L)$ is canonically isomorphic to
$E_0(A,A)$, we get the surprising result that $E_0(A,F)$ is
canonically isomorphic to $E_0(A,A^n)$ for any projective $A$-module
$F=Q\op A$ of rank $n$.

We end with the following result which follows from (\ref{6.2}).

\begin{proposition}
Let $A$ be a ring of even dimension $n$ and let $J$ be an ideal of $A$
of height $n$ such that $J/J^2$ is generated by $n$ elements. Let $L$
and $P$ be projective $A$-modules of rank $1$ and $n$ respectively
such that $P$ is stably isomorphic to $L\op A^{n-1}$.  Then $J$ is
surjective image of $P$ if and only if given any projective $A$-module
$Q$ of rank $n-2$ with determinant $L$, there exists a projective
$A$-module $P_1$ which is stably isomorphic to $Q\op A^2$ such that
$J$ is surjective image of $P_1$.
\end{proposition}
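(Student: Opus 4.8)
The plan is to express both directions through the vanishing of the single element $(J)$ in the weak Euler class group, combining Proposition~\ref{6.2} with the canonical identifications of weak Euler class groups established above: for every rank-$n$ projective $G=Q'\op A$ with $\wedge^nG\cong L$ one has $E_0(A,G)\cong E_0(A,L)$ canonically, the isomorphism sending the class of an ideal to the class of the same ideal. Both $L\op A^{n-1}$ and $Q\op A^2$ (for $Q$ of rank $n-2$ with $\det Q\cong L$) are modules of this type, and — writing $L\op A^{n-1}=(L\op A^{n-3})\op A^2$, legitimate since $n\ge 4$, the case $n=2$ forcing $L$ free and being handled the same way with $A^2$ in place of $L\op A^{n-1}$ — each has a free rank-two direct summand, so Proposition~\ref{6.2} applies to both.

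For the forward implication, suppose $J$ is a surjective image of $P$. Since $J$ has height $n$ and $P$ is stably isomorphic to $L\op A^{n-1}$, Proposition~\ref{6.2} (with $F=L\op A^{n-1}$) gives $(J)=0$ in $E_0(A,L\op A^{n-1})$. Transporting along $E_0(A,L\op A^{n-1})\cong E_0(A,L)\cong E_0(A,Q\op A^2)$, which send $(J)$ to $(J)$, yields $(J)=0$ in $E_0(A,Q\op A^2)$ for the given $Q$. Applying Proposition~\ref{6.2} once more, now with $F=Q\op A^2$, produces a projective module $P_1$ of rank $n$, stably isomorphic to $Q\op A^2$, with a surjection onto $J$. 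Since $Q$ was arbitrary, this is the desired conclusion, and this direction goes through cleanly.

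For the converse, specialize the hypothesis to $Q=L\op A^{n-3}$ (rank $n-2$, determinant $L$), so that $Q\op A^2=L\op A^{n-1}$: we obtain a rank-$n$ projective stably isomorphic to $L\op A^{n-1}$ that surjects onto $J$, whence $(J)=0$ in $E_0(A,L\op A^{n-1})$ by Proposition~\ref{6.2}. To finish I would recover a surjection from the given module $P$ itself. For this, note that $P$, being stably isomorphic to $L\op A^{n-1}=(L\op A^{n-2})\op A$, has $[P]=[(L\op A^{n-2})\op A]$ in $K_0(A)$ with $L\op A^{n-2}$ of rank $n-1$; hence $e(P)=0$ in $E_0(A,L\op A^{n-1})$ by Proposition~\ref{6.3}. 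Therefore $e(P)=(J)$ in $E_0(A,L\op A^{n-1})$, and Proposition~\ref{6.4} furnishes a projective module $P_1$ of rank $n$ with $[P_1]=[P]$ in $K_0(A)$ admitting a surjection onto $J$.

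The main obstacle is the final step of the converse: Proposition~\ref{6.2} and Proposition~\ref{6.4} only produce \emph{some} module in the stable class of $P$ mapping onto $J$, whereas the statement fixes the module $P$. To bridge this I would keep track of $K_0$-classes throughout — one has $[P_1]=[P]$ and in fact $P_1\op A\cong P\op A$ (these become isomorphic after adding one free generator, the rank then exceeding $\dim A$) — use this isomorphism to get a surjection $P\op A\cong P_1\op A\surj J$, and then descend it to a surjection $P\surj J$ by the standard moving argument for height-$n$ ideals whose conormal module is generated by $n$ elements (the technique used in the opening step of the proof of Proposition~\ref{6.4}). Everything else is bookkeeping with the canonical identifications of weak Euler class groups.
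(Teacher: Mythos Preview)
Your forward direction, and your reduction of the converse to $(J)=0$ in $E_0(A,L\oplus A^{n-1})$, are exactly what the paper has in mind: the proposition is stated as an immediate consequence of Proposition~\ref{6.2} together with the canonical identifications $E_0(A,Q\oplus A^2)\cong E_0(A,L)\cong E_0(A,L\oplus A^{n-1})$. With that reading the converse ends the moment you have $(J)=0$: apply Proposition~\ref{6.2} once more with $F=L\oplus A^{n-1}$ to produce \emph{some} rank-$n$ projective stably isomorphic to $L\oplus A^{n-1}$ mapping onto $J$, and that is what the statement is asserting. The detour through Propositions~\ref{6.3} and~\ref{6.4} is unnecessary.

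The difficulty you raise in your last paragraph is a reading issue rather than a mathematical gap in the paper's argument. The proposition, as the paper uses it (``follows from~\ref{6.2}''), is existential in $P$: it characterizes when $J$ is a surjective image of \emph{some} module stably isomorphic to $L\oplus A^{n-1}$, in parallel with the $P_1$ on the right-hand side. Read literally as a statement about one fixed $P$, the converse would indeed require more than Proposition~\ref{6.2} provides, and your proposed descent does not close that gap: from $P_1\oplus A\cong P\oplus A$ and $P_1\surj J$ one cannot in general conclude $P\surj J$ (take $J$ such that $(J)=0$ and $P$ a non-trivial stably free module of rank $n$ with no unimodular element), and the opening step of Proposition~\ref{6.4} is a lifting-mod-$J^2$ device, not a way to cancel a free summand from a surjection. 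So drop the final paragraph and read the statement existentially; then your argument coincides with the paper's.
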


{}

\end{document}